\newcommand{\colR}{green!20}
\newcommand{\patP}{crosshatch}
\newcommand{\patUnoMenoP}{north east lines}
\newcommand{\patUno}{}
\newcommand{\patProportionalP}{crosshatch dots}
\newcommand{\tr}{\mathsf{T}}
\newcommand{\skeletongeneric}{
	%\draw[help lines] (0,0) rectangle (3.1,3.1);
	\draw[help lines,dashed] (0,0) -- (3.1,3.1);
	\draw[help lines,<->] (1.57,-0.1) -- (2.93,-0.1)
	node[pos=0.5, above] {$\pisup{\rme}$};
	\draw[help lines,->,yshift=-1.475cm]
	(-0.3,0) node[left]{$\pisup{\rp}$} -- (-0.1,0);
	\draw[help lines,->,xshift=1.475cm]
	(0,-0.3) node[above]{$\pisup{\rp}$} -- (0,-0.1);
	\draw[very thick] (0.1,-0.03) -- (-0.03,-0.03) -- 
	(-0.03,3.13) -- (0.1,3.13);
	\draw[very thick] (3.1-0.1,-0.03) -- (3.1+0.03,-0.03) -- 
	(3.1+0.03,3.13) -- (3.1-0.1,3.13);
	
	\draw[help lines,xshift=1.4 cm] (0,0) -- (0,3.1);
	\draw[help lines,yshift=-1.4 cm] (0,0) -- (3.1,0);
	\draw[help lines,xshift=1.55 cm] (0,0) -- (0,3.1);
	\draw[help lines,yshift=-1.55 cm] (0,0) -- (3.1,0);
	\draw[help lines,xshift=2.95 cm] (0,0) -- (0,3.1);
	\draw[help lines,yshift=-2.95 cm] (0,0) -- (3.1,0);
}
\newcommand{\skeleton}{
	%\draw[help lines] (0,0) rectangle (3.1,3.1);
	\draw[help lines,dashed] (0,0) -- (3.1,3.1);
	\draw[help lines,<->] (2.02,0.1) -- (3,0.1)
	node[pos=0.5, above] {$r$};
	\draw[very thick] (0.1,-0.03) -- (-0.03,-0.03) -- 
	(-0.03,3.13) -- (0.1,3.13);
	\draw[very thick] (3.1-0.1,-0.03) -- (3.1+0.03,-0.03) -- 
	(3.1+0.03,3.13) -- (3.1-0.1,3.13);
	\foreach \k in {1,2,3}
	{
		\draw[help lines,xshift=\k cm] (0,0) -- (0,3.1);
		\draw[help lines,yshift=-\k cm] (0,0) -- (3.1,0);
	}
}
\newcommand{\vm}{V_{\max}}
\newcommand{\vb}{v_*}
\newcommand{\va}{v^*}
\newcommand{\fb}{f_*}
\newcommand{\fa}{f^*}
\newcommand{\f}{\mathbf{f}}
\newcommand{\Dv}{\Delta v}
\newcommand{\dv}{\delta v}
\newcommand{\ds}{\displaystyle}
\newcommand{\norm}[1]{\left\lVert#1\right\rVert}
\newcommand{\abs}[1]{\left\vert#1\right\vert}
\newcommand{\pisup}[1]{\left\lceil#1\right\rceil}
\newcommand{\dvu}{\mathrm{d}v}
\newcommand{\dvua}{\mathrm{d}\va}
\newcommand{\dvub}{\mathrm{d}\vb}
\newcommand{\rp}{r^+}
\newcommand{\rme}{r^-}
\newcommand{\AVVV}[3]{{A(#1\!\!\to\!#2|#3)}}
\newcommand{\AVVVR}[4]{{A(#1\!\!\to\!#2|#3;#4)}}
\newcommand{\Avvv}{\AVVV{\vb}{v}{\va}}
\newcommand{\Avvvr}{\AVVVR{\vb}{v}{\va}{\rho}}
\newcounter{the}
\theoremstyle{remark}
\newtheorem{theorem}[the]{Theorem}
\newtheorem{lemma}[the]{Lemma}
\newtheorem{remark}[the]{Remark}
\newtheorem{assumption}{Assumption}
\newtheorem*{notation}{Notation}
\author[1]{Gabriella Puppo\thanks{gabriella.puppo@uninsubria.it}}
\author[2]{Matteo Semplice\thanks{matteo.semplice@unito.it}}
\author[3]{Andrea Tosin\thanks{andrea.tosin@polito.it}}
\author[1]{Giuseppe Visconti\thanks{giuseppe.visconti@uninsubria.it}}
\affil[1]{Universit\`a dell'Insubria, Como, Italy}
\affil[2]{Universit\`a degli Studi di Torino, Torino, Italy}
\affil[3]{Istituto per le Applicazioni del Calcolo ``M. Picone'', CNR, Roma, Italy}
\date{}
\begin{document}

\title{Kinetic models for traffic flow resulting in a reduced space of microscopic velocities}
\maketitle

\begin{abstract}
	The purpose of this paper is to study the properties of kinetic models for traffic flow described by a Boltzmann-type approach and based on a continuous space of microscopic velocities. 
	In our models, the particular structure of the collision kernel allows one to find the analytical expression of a class of steady-state distributions, which are characterized by being supported on a quantized space of microscopic speeds. The number of these velocities is determined by a physical parameter describing the typical acceleration of a vehicle and the uniqueness of this class of solutions is supported by numerical investigations. 
	This shows that it is possible to have the full richness of a kinetic approach with the simplicity of a space of microscopic velocities characterized by a small number of modes. Moreover, the explicit expression of the asymptotic distribution paves the way to deriving new macroscopic equations using the closure provided by the kinetic model.
\end{abstract}

\paragraph{Keywords} Kinetic models, traffic flow, equilibrium distributions, discrete velocity models 

\paragraph{MSC} 35Q20, 65Z05, 90B20

\section{Introduction} \label{sec:Intro}
The purpose of kinetic theory for traffic flow is to provide an aggregate representation of the distribution of vehicles on the road, thanks to a detailed characterization of the microscopic interactions among the vehicles, which play an important role in the macroscopic trend of the flow. The goal is to obtain information on the macroscopic characteristics of the flow without assuming previous knowledge on the dependence of the mean velocity on the local density of traffic, as it is done in standard macroscopic traffic models. See in particular the prototype of macroscopic models, \cite{lighthill1955PRSL}, or the reviews in \cite{piccoli2009ENCYCLOPEDIA,Rosini}. More refined macroscopic models consider a system of equations, instead of a single equation, see \cite{aw2000SIAP}, and/or they prescribe different flow conditions at certain stages, building phase transitions within the flow, \cite{Colombo2002,LebacqueGSOM,MendezVelasco13}, but still it is necessary to complete the model with a closure law, derived from heuristic or physical arguments, or from experimental data. Kinetic models provide quite naturally a closure law, which is linked in general to the equilibria of the kinetic model. Another way to derive closure laws is through microscopic ``follow the leader'' models \cite{aw2002SIAP,ZhangMultiphase}. For a review on the derivation of macroscopic traffic models from the microscopic ``follow the leader'' ones and from the mesoscopic kinetic theory, see \cite{klarReview}.

Several kinetic approaches have been proposed, starting from the pioneering work of \cite{Prigogine61, PrigogineHerman} and later \cite{paveri1975TR}. These models were based on a Boltzmann-type collision term in which the cross section, giving the probability of an interaction between two particles, is replaced with a probability distribution depending on the local traffic conditions. The equation describes the relaxation of the kinetic distribution in time due to the acceleration and slowing down interactions among vehicles. However, the interaction integrals appearing in kinetic Boltzmann-type models for traffic flow based on a continuous velocity space, see \cite{KlarWegener96}, typically do not provide the analytical expression of the equilibrium distribution and they are very demanding from a computational point of view. For this reason, two main approaches have been taken into account in order to compute the time-asymptotic distribution or to reduce the computational cost: on the one hand, one may consider Vlasov-Fokker-Planck type models in which the interaction integrals are replaced by differential operators, see \cite{HertyIllner08,HertyPareschi10,KlarIllnerMaterne}; on the other hand, one may consider simplified kinetic models with a small number of velocities, namely the discrete-velocity models, see \cite{FermoTosin13,FermoTosin14}.

In this work, following the classical Boltzmann-like setting of binary interactions, we study a kinetic model based on a continuous velocity space, which does not suffer from the aforementioned drawbacks. To this end, we focus only on spatially homogeneous problems and we investigate the interplay between the microscopic rules appearing in the collision kernel and the equilibrium solutions, finding analytical expressions of the time-asymptotic distribution, which result in a realistic macroscopic model for traffic flow.

As in \cite{KlarWegener96}, our transition probability is characterized by the fact that drivers react to the presence of other vehicles, deciding whether to modify their speed according to the overall traffic conditions and to the particular velocity of the cars around them. Thus, the decision of whether and how to modify one's speed depends on the local traffic density, or better, on the free space available, as we argue in \cite{PgSmTaVg}. Clearly, other choices were considered in literature. For example, in \cite{HertyPareschi10} the authors assume that drivers react to the local mean speed and they decide to accelerate  or to brake by comparing their velocity to the speed of the flow. Here, the possible speeds available to the driver are naturally the driver's current speed, and a set of speeds which depend on the velocity of the vehicles ahead and on the local density. In particular, the probability of accelerating increases monotonically with the free space available.
%we suppose that a vehicle may always increase its velocity, independently on the speed of the leading car but depending on the congestion of the road: more the latter is empty and more the probability of accelerating is higher.
Instead, the probability of braking increases as the road becomes congested.
%: precisely, it brakes to the speed of the leading vehicle if it is faster or it does not change the velocity otherwise. 

This framework permits to take the stochasticity of the drivers' behavior into account, thanks to the probability distribution which assigns a weight to the possible driver's decisions, while maintaining the general kinetic setting, based on the deterministic evolution of the distribution function. We propose two models that follow the framework just described: one is based on quantized velocity jumps, i.e., if acceleration occurs the new speed is obtained by increasing the pre-interaction velocity of a fixed quantity $\Dv$ ($\delta$ model); the other one is based on a continuous uniform distribution defined on a bounded interval parametrized by $\Dv$ ($\chi$ model), see also \cite{KlarWegener96,klar1997Enskog}.

In this paper, $\Dv$ is a finite parameter which models the physical velocity jump performed by vehicles when they increase their speeds as a result of an interaction. Clearly, this parameter may depend on the mechanical characteristics of vehicles, see \cite{PgSmTaVg3}, but in this paper we will assume that $\Dv$ is fixed. In the section on macroscopic properties, \S \ref{sect:fundamental}, we show that $\Dv$ is related to the maximum acceleration and we discuss how this parameter can be chosen through experimental data used by Lebacque \cite{Lebacque03}.
%Thus the models discussed here are based on the parameter $\Dv$ which controls the maximum acceleration, \cite{Lebacque03}.

\begin{figure}
	\centering
	
	\begin{tikzpicture}
		\node[fill=black!10] (BC) at (0,0)
		{\parbox{0.33\textwidth}
			{Boltzmann\\
				continuous-velocity: \\
				$f(t,v)$ s.t.\\
				$\partial_t f(t,v)=Q[f,f](t,v)$}
		};
		
		\node[fill=black!10] (BD) at (8,0)  { 
			\parbox{0.33\textwidth}
			{Boltzmann\\
				discrete-velocity: \\
				$f_j(t), j=1,\ldots,N$ s.t.\\
				$\frac{d}{dt} f_j(t)=Q_j[\mathbf{f},\mathbf{f}](t)$}
		};
		
		\node[fill=black!10] (EQa) at (0,-3)
		{\parbox{0.4\textwidth}
			{Existence of quantized equilibria:
				$f^\infty(v)=\sum_{j=1}^N f_j^\infty\delta_{v_j}(v)$}
		};
		
		\node[fill=black!10] (EQd) at (8,-3)
		{\parbox{0.4\textwidth}
			{All equilibria are quantized:\newline
				$f^\infty(v)=\sum_{j=1}^N f_j^\infty\delta_{v_j}(v)$}
		};

		\draw[thick,->] (BC) -- (BD) node[pos=0.5,sloped,above]{discretize};
		\draw[thick,->] (BC) -- (EQa) node[pos=0.5,sloped,above]{equilibria};
		\draw[thick,->] (BC) -- (EQa) node[pos=0.5,sloped,below]{(Th.\ref{th:continuous-eq})};
		\draw[thick,->] (BD) -- (EQd) node[pos=0.5,sloped,above]{equilibria};
		\draw[thick,->] (BD) -- (EQd) node[pos=0.5,sloped,below]{(Th.\ref{th:delta_eq})};
		\draw[thick,<->] (EQa) -- (EQd) node[pos=0.5,sloped,below]{(Th.\ref{th:delta_eq})};
		
		\node[fill=black!10] (CL) at (4,-5)
		{\parbox{0.33\textwidth}{Macroscopic closure law based on a reduced velocity space}};
		\draw[thick,->] (EQa) -- (CL);
		\draw[thick,->] (EQd) -- (CL);
		
	\end{tikzpicture}
	\caption{Connection between the $\delta$ model and a discrete-velocity model, having the same steady-state distribution.}
	\label{fig:diagcomm}
\end{figure}
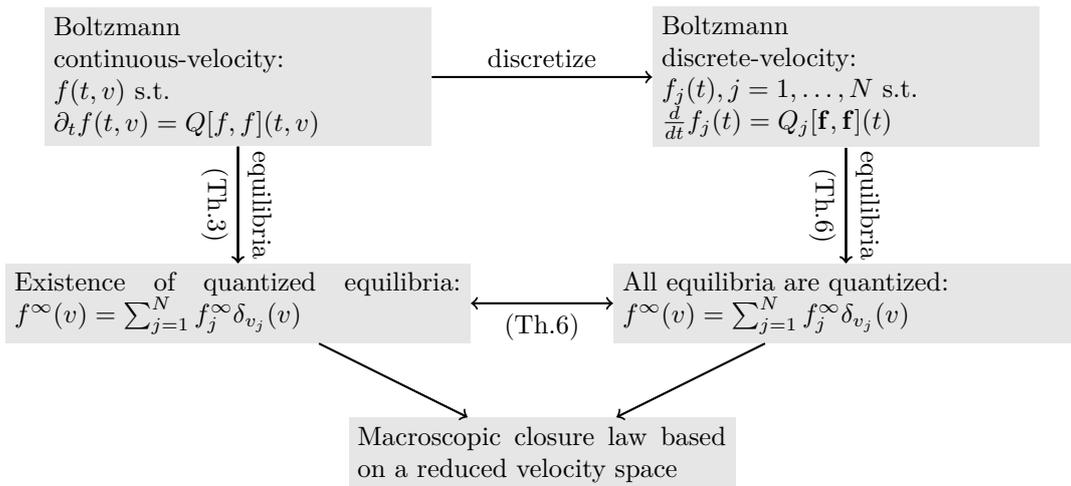

This paper also shows that the fundamental diagrams (or closure laws) obtained with the perhaps more natural, but more complex and more computationally demanding, $\chi$ model are very close to those provided by the simpler $\delta$ model. We thus investigate the equilibria of the $\delta$ model, both from an analytical and a numerical point of view. Analytically we find that velocity distributions formed by a linear combination of Dirac $\delta$'s may be equilibria only if the $\delta$'s are centered at velocities spaced by multiples of $\Dv$. Next, we compute equilibria using a numerical scheme capable of converging also to possible absolutely continuous equilibria. Here we find only the quantized equilibria described above, independently of the discretization parameters. This fact suggests that the class of discrete-velocity equilibria is the only one that the continuous-velocity Boltzmann-type $\delta$ model possesses. This situation is summarized graphically in Figure~\ref{fig:diagcomm}.

The paper is organized as follows. In \S \ref{sect:general-model} we briefly recall the Boltzmann-type kinetic equation and we specialize it by giving two sets of interaction rules. The resulting $\delta$ and $\chi$ models are discussed in depth in \S \ref{sect:delta} and in \S \ref{sect:chi}, respectively. In particular, in \S \ref{sect:delta} we prove the existence of a class of quantized steady-state distributions. Since we are unable to prove their uniqueness, we discretize the model by approximating the kinetic distribution with a piecewise constant function and we then show by numerical evidence that the class of stationary solutions of the $\delta$ model is only the one we have already studied analytically. In \S \ref{sect:chi}, we show the somewhat surprising result that the equilibrium distributions of the $\chi$ model yield a macroscopic flow that is extremely well approximated by the discrete-velocity-based closure law resulting from the $\delta$ model. This is illustrated in the final section \S \ref{sect:fundamental}, where we show that the fundamental diagrams, that is the flux-density relationships, obtained from the two models tend to coincide under grid refinement. Next we compare these diagrams with experimental data, finding that our models reproduce well experimental fundamental diagrams and thus they capture the characteristics of macroscopic  traffic flow. Further, we compute the macroscopic acceleration induced by the model, proving in particular its link with $\Dv$. We end the paper with a section summarizing the main results of this work, and proposing possible applications and further developments. Finally, the details of the matrix elements resulting from the discretization of the $\chi$ model are written in an Appendix.

\section{The general form of the kinetic model}
\label{sect:general-model}

In this section we briefly recall the general structure of a Boltzmann-type kinetic traffic model, which we will then specialize by prescribing a set of binary interaction rules in order to derive two models which differ only in the modeling of the acceleration interaction. Both models are defined on a continuous velocity space and they are characterized by a parameter $\Dv$ related to the typical acceleration of a vehicle.

We will focus on the space homogeneous case, because we want to investigate the structure of the collision term and of the resulting equilibrium distributions. In particular, we show that the simplified model ($\delta$ model) permits to describe the complexity of the equilibrium solutions with a very small number of discrete velocities.%Therefore, it results computationally cheaper than the more refined model ($\chi$ model).

Let $f=f(t,v):\mathbb{R}^+ \times V \to \mathbb{R}^+$ be the {\em kinetic distribution function}, where $V=[0,\vm]$ is the domain of the microscopic speeds and $\vm$ is the maximum  speed, which may depend on the mechanical characteristics of the vehicles, on imposed speed limits, environmental conditions (such as the quality of the road, the weather conditions, etc). The statistical distribution $f$ is such that $f(t,v) \dvu$ gives the number of vehicles with velocity in $[v,v+\dvu]$ at time $t$.

As usual, macroscopic quantities  are obtained as moments of the distribution function $f$ with respect to the velocity $v$: 
\[
	\rho(t)=\int_V f(t,v)\dvu,\quad (\rho u)(t)=\int_V vf(t,v)\dvu
\]
where $\rho$ is the {\em density}, i.e. the number of vehicles per unit length (tipically, kilometers), $u$ is the {\em macroscopic speed} and $\rho u$ is the {\em flux} of vehicles. Note that $\rho$ can also be interpreted as the reciprocal of the average distance between cars, see \cite{aw2000SIAP}.

In the homogeneous case, the Boltzmann-type equation can be written as
\begin{equation}
	\partial_t f(t,v)=Q[f,f](t,v)
	\label{eq:model1}
\end{equation}
where $Q[f,f](t,v)$ is the {\em collisional operator} which describes the relaxation to equilibrium due to the microscopic binary interactions among vehicles. For mass conservation to hold, the collision term must satisfy
\[
	\int_V Q[f,f](t,v)\dvu=0.
\]
In fact, this ensures that, in the space homogeneous case, the density remains constant in time.

The collisional operator is usually split into a gain term $G[f,f]$ and a loss term $L[f,f]$, that 
model statistically the  interactions which lead to gain or to loose the test speed $v$.
Denoting with $\Avvv$ the probability that the velocity $v\in V$ results from a microscopic interaction between {\em candidate vehicles} with velocity $\vb$ and {\em field vehicles} with speed $\va$, the model writes as an integro-differential equation
\begin{equation} 	\label{eq:model2}
  \partial_tf(t,v)=
  \underbrace{
	\int_V\int_V\eta(\vb,\va)\Avvv f(t,\vb)f(t,\va)\dvub \dvua
	}_{G[f,f](t,v)}
  -\underbrace{
	f(t,v)\int_V\eta(v,\va)f(t,\va)\dvua
	}_{L[f,f](t,v)}
\end{equation}
in which $\eta(\vb,\va)$ is the {\em interaction rate} possibly depending on the relative speed of the interacting vehicles, e.g. $\eta(\vb,\va)=|\vb-\va|$ as in~\cite{KlarWegener96,coscia2007IJNM}. Although such a choice would make the model richer, in~\cite{PgSmTaVg} we found that a constant interaction rate is already sufficient to account for many aspects of the complexity of traffic. Another possibility is to consider $\eta$  as dependent on the local congestion of the road, that is $\eta=\eta(\rho)$. However this is not relevant in the homogeneous case, where $\rho$ is constant, for then $\eta$ would affect only the relaxation time towards equilibrium. Thus in this paper we will set $\eta=$constant. 

\begin{notation}
In the whole paper, in order to shorten formulas, we adopt the following traditional shorthand
$f(t,\vb)=\fb$, $f(t,\va)=\fa$, etc.
Note in particular that in the space homogeneous case $\fa$ and $\fb$ are not different distribution functions, but the evaluation of the same $f(t,v)$ at two different points $\vb$ and $\va$. 
\end{notation}

We will suppose that $A$ depends also on the macroscopic density $\rho$ in order  to account for the influence of the macroscopic traffic conditions (local road congestion) on the microscopic interactions among vehicles, see \cite{PrigogineHerman,KlarWegener96,hertyillner09,PgSmTaVg}. Thus, we suppose that $A$ fulfills
%\begin{align*}
%	&A(\vb,v,\va,\rho)\geq 0,\quad \vb,\va,v\in V,\;\rho\in[0,\rho_{\max}]\\
%	&\int_V A(\vb,v,\va,\rho)\dvu=1,\quad \vb,\va\in V,\;\rho\in[0,\rho_{\max}]
%\end{align*}
\begin{assumption} \label{ass:A1}
\begin{align*}
	\Avvvr \geq 0,
	\quad\text{ and }
	\int_V \Avvvr \dvu=1,
	\quad\text{ for }
	\vb,\va,v\in V,\;\rho\in[0,\rho_{\max}]
\end{align*}
where $\rho_{\max}$ is the maximum density of vehicles, for instance the maximum number of vehicles per unit length in bumper-to-bumper conditions.
\end{assumption}

\begin{remark} \label{rem:consmass}
Any transition probability density $A$ that satisfies Assumption \ref{ass:A1} guarantees mass conservation since
\begin{align*}
	\partial_t\int_V f(t,v)\dvu=\int_V Q[f,f](t,v) \dvu&=\\
	\int_V\int_V  f(t,\vb) f(t,\va) \dvub \dvua&-\int_V f(t,v)\dvu \int_V f(t,\va) \dvua = 0.
\end{align*}
\end{remark}

\subsection{Choice of the probability density $A$}
\label{sec:modeling}
The probability density $A$ assignes a post-interaction speed in a non-deterministic way, consistently with the intrinsic stochasticity of the drivers' behavior. The construction of $A$ is at the core of a kinetic model. Here, it is obtained with a very small set of rules.

\begin{itemize}
\item If $\vb\leq\va$, i.e. the candidate vehicle is slower than the field vehicle, the post-interaction rules are:
\begin{description}
\item[Do nothing:] the candidate vehicle keeps its pre-interaction speed with probability $1-P_1$, thus $v=\vb$;
\item[Accelerate:] the candidate vehicle accelerates to a velocity $v>\vb$ with probability $P_1$.\end{description}
\item If $\vb>\va$, i.e. the candidate vehicle is faster than the field vehicle, the post-interaction rules are:
\begin{description}
\item[Accelerate:] in order to overtake the leading vehicle, the candidate vehicle accelerates to a velocity $v>\vb$ with probability $P_2$;
\item[Brake:] the candidate vehicle decelerates to the velocity $v=\va$ with probability $1-P_2$, thus following the leading vehicle.
\end{description}
\end{itemize}

From the previous rules, we observe that the probability density $A$ has a term which will be proportional to a Dirac delta function at $v=\vb$, due to the interaction which preserves the pre-interaction microscopic speed (the ``Do nothing'' alternative). Note that this is a ``false gain'' for the distribution $f$, because the number of vehicles with speed $v$ is not altered by this interaction. 

In the following, we assign the speed after braking as 
proposed  in~\cite{Prigogine61} and used also  in~\cite{FermoTosin13,FermoTosin14} in the context of a discrete velocity model. Namely, we suppose that if a vehicle brakes, interacting with a slower vehicle, it slows down to the speed $\va$ of the leading vehicle. Thus, after the interaction it gets the speed $v=\va$ without overtaking the leading field vehicle.
Instead, for the post-interaction speed due to acceleration we propose two different models.
\begin{description}
\item[Quantized acceleration ($\delta$ model):] the output velocity $v$ is obtained by accelerating instantaneously from $\vb$ to the velocity $\min\left\{\vb+\Dv,\vm\right\}$. Considering all possible outcomes, the resulting probability distribution, in this case, is
\begin{equation}
	\Avvvr=
	\begin{cases}
		(1-P_1)\delta_{\vb}(v)+P_1\delta_{\min\left\{\vb+\Dv,\vm\right\}}(v), &\text{if\; $\vb\leq\va$}\\
		(1-P_2)\delta_{\va}(v)+P_2\delta_{\min\left\{\vb+\Dv,\vm\right\}}(v), &\text{if\; $\vb>\va$}.
	\end{cases}
	\label{eq:Adelta}
\end{equation}
\item[Uniformly distributed acceleration ($\chi$ model):] the new velocity $v$ is uniformly distributed between $\vb$ and $\min\{\vb+\Dv,\vm\}$. On the whole, the resulting probability distribution becomes
\begin{equation}
	\Avvvr=
	\begin{cases}
		(1-P_1)\delta_{\vb}(v)+P_1\frac{\chi_{\left[\vb,\min\left\{\vb+\Dv,\vm\right\}\right]}(v)}{\min\left\{\vb+\Dv,\vm\right\}-\vb}, &\text{if\; $\vb\leq\va$}\\
		(1-P_2)\delta_{\va}(v)+P_2\frac{\chi_{\left[\vb,\min\left\{\vb+\Dv,\vm\right\}\right]}(v)}{\min\left\{\vb+\Dv,\vm\right\}-\vb}, &\text{if\; $\vb>\va$}.
	\end{cases}
	\label{eq:Achi}
\end{equation}
\end{description}

Note that the acceleration of a vehicle in  \eqref{eq:Adelta} is similar to the one assumed in 
\cite{FermoTosin13,FermoTosin14}, which however were based on a discrete velocity space.
In~\cite{FermoTosin13,FermoTosin14} the {\em acceleration parameter} $\Dv$ is chosen as the distance between two adjacent discrete velocities, thus $\Dv$ depends on the number of elements in the speed lattice. In this work, $\Dv$ is a physical parameter that represents the ability of a vehicle to change its pre-interaction speed $\vb$. With this choice, $\Dv$ does not depend on the discretization of the velocity space and the maximum acceleration is bounded, as in~\cite{Lebacque03}.  In contrast, deceleration can be larger than $\Dv$, and this fact reflects the hypothesis that drivers 
tend to brake immediately if the flow becomes more congested, while they react more slowly when they can accelerate (see the concept of {\em traffic hysteresis} in~\cite{ZhangMultiphase} and references therein).

The acceleration performed in the $\chi$ model has some points of contact with the microscopic rules prescribed in \cite{HertyPareschi10,KlarWegener96}. In \cite{HertyPareschi10}, however, the post-interaction speed is selected through a random process in the interval $[\vb, \vm]$. Here instead, the post-interaction speed is deterministic. In~\cite{KlarWegener96} instead, the velocity after acceleration is uniformly distributed over a range of speeds between $\vb$ and $\vb+\alpha(\vm-\vb)$, where $\alpha$ is supposed to depend on the local density; in a similar way, the output velocity from a braking interaction is assumed to be uniformly distributed in $[\beta\va,\va]$, with $\beta\in[0,1]$.

In the following, the probabilities $P_1$ and $P_2$ are taken as $P_1=P_2=:P$ and $P$ will be a function of the local density only, as assumed for instance in~\cite{PrigogineHerman} where $P=1-\rho/\rho_{\max}$. More generally, from a modeling point of view, $P$  should be a decreasing function of $\rho$, see also \cite{HertyIllner08} or \cite{Rosini}.
For instance in~\cite{PgSmTaVg} we have considered
\begin{equation}\label{eq:gamma_law}
P=1- \left( \frac{\rho}{\rho_{\max}} \right)^{\gamma},
\end{equation}
where $\gamma\in (0,1)$ can be chosen to better fit experimental data.
In~\cite{FermoTosin13} the choice $P=\alpha(1- \rho/\rho_{\max})$ is proposed, $\alpha\in[0,1]$ is a parameter describing environmental conditions, for instance road or weather conditions. In more sophisticated models, one could also choose $P$ as a function of the relative speed of interacting vehicles, but we will not explore this possibility in the present work.

The simplified choice $P_1=P_2$ and the interaction rules described at the beginning of this section guarantee the continuity of the transition probability \eqref{eq:Adelta} and \eqref{eq:Achi} along $\vb=\va$.

\begin{remark} \label{rem:falsegains}
Both choices~\eqref{eq:Adelta} and~\eqref{eq:Achi} for $A$ include terms of the form $\delta_{\vb}(v)$, which actually describe false gains mentioned above, because the velocity of the candidate vehicle does not change. They are automatically compensated by false losses, as it can be seen by rewriting the classical kinetic loss term of equation \eqref{eq:model2} in the form
$$
	L[f,f](t,v)= \int_V \int_V  \eta \delta_{\vb}(v) \fb \fa \dvua \dvub.
$$
\end{remark}

%%%%%%%%%%%%%%%%%%%%%%%%%%%%%%%%%%%%%%%%%%%%%%%%%%%%%%%%%%%

\section{The $\delta$ velocity model}
\label{sect:delta}

Now, we focus on the steady states of model \eqref{eq:Adelta}. We start with the existence of a particular set of equilibrium solutions of the continuous model, which are computed analytically. Next, we consider a finite volume discretization of the model, and we show that the discrete equilibria have precisely the structure found before analytically, thus suggesting that the particular set of equilibria found analytically are the only equilibria of the system.

It can be proven \cite{FregugliaTosin15} that the Cauchy problem associated to \eqref{eq:model2} is well posed provided the probability density $A$ is Lipschitz continuous with respect to $\vb$ and $\va$ in a suitable Wasserstein metric. This is indeed the case of the $A$ defined in \eqref{eq:Adelta}, with $P_1=P_2$. 

Using the expression~\eqref{eq:Adelta} for $A$, we rewrite the gain term in~\eqref{eq:model2} as
\begin{align*}
	G[f,f](t,v)=&\eta \int_V \int_V \left[(1-P)\delta_{\min\{\vb,\va\}}(v) + P\delta_{\min\left\{\vb+\Dv,\vm\right\}}(v)\right]\fb\fa \dvub \dvua 
\end{align*}
and the following important result on the existence of a particular class of stationary solutions holds. More precisely, it characterizes the equilibrium distributions having the form of linear combinations of Dirac's masses. This theorem establishes the connection represented by the left vertical arrow in Figure \ref{fig:diagcomm}.

\begin{theorem}\label{th:continuous-eq}
	Let $P$ be a given function of the density $\rho\in[0,\rho_{\max}]$ such that $P\in[0,1]$. Let $\{v_j\}_{j=1}^N$ be a set of velocities in $[0,\vm]$. The distribution function
	\[
	f^{\infty}(v)=\sum_{j=1}^N f^{\infty}_j \delta_{v_j}(v), \quad f^{\infty}_j > 0 \quad \forall\; j=1,\dots,N,
	\]
	with $\sum_{j=1}^N f_j^\infty=\rho$, is a weak stationary solution of the $\delta$ model provided  $v_j=v_1+j\Dv$, $j=1,\dots,N$.
\end{theorem}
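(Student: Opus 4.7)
I would substitute the ansatz $f^\infty(v)=\sum_{j=1}^N f_j^\infty\delta_{v_j}(v)$ directly into the weak form of $Q[f^\infty,f^\infty]=0$. Because $f^\infty$ is a finite atomic measure, the double integral in the gain of \eqref{eq:model2} collapses to a double sum, giving
\[
Q[f^\infty,f^\infty]=\eta\sum_{h,k=1}^N f_h^\infty f_k^\infty\bigl[(1-P)\delta_{\min(v_h,v_k)}+P\delta_{\min(v_h+\Dv,\vm)}\bigr]-\eta\rho\sum_{j=1}^N f_j^\infty\delta_{v_j},
\]
which must vanish as a distribution on $V$.

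The loss and the ``braking'' atoms $\delta_{\min(v_h,v_k)}$ are automatically supported on the grid, so the only nontrivial condition comes from the ``acceleration'' atoms $\delta_{\min(v_h+\Dv,\vm)}$: each of these must coincide with some $v_j$. Ordering $v_1<\dots<v_N$, acceleration from $v_N$ cannot land at a grid point unless the cap $\vm$ itself is one, forcing $\vm=v_N$. Then the equal-spacing hypothesis $v_{j+1}-v_j=\Dv$ ensures that the acceleration map sends $v_h\mapsto v_{h+1}$ for $h<N$ and $v_N\mapsto v_N$, so every ``new'' atom lands on the grid and $Q[f^\infty,f^\infty]$ becomes a pure combination of $\delta_{v_j}$'s.

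Once the support is confirmed to be $\{v_j\}$, I would read off the coefficient at each $v_j$ as
\[
(1-P)f_j^\infty(F_j+F_{j+1})+P\rho\,c_j-\rho f_j^\infty,\qquad F_j=\sum_{k\geq j}f_k^\infty,
\]
where $c_1=0$, $c_j=f_{j-1}^\infty$ for $1<j<N$, and $c_N=f_{N-1}^\infty+f_N^\infty$. Weak stationarity amounts to the simultaneous vanishing of these $N$ scalar expressions. Summing them reproduces the mass-conservation identity of Remark~\ref{rem:consmass}, so one equation is redundant; combined with the normalization $\sum_j f_j^\infty=\rho$, the remainder forms a closed system which can be solved recursively, starting from the $j=1$ equation that pins down $F_2=\rho P/(1-P)$ and continuing to the $f_j^\infty$ for larger $j$.

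The main obstacle I anticipate is the bookkeeping in the second paragraph, namely the interplay with the cap at $\vm$: one must rule out non-uniform grids ``glued'' at $\vm=v_N$ by the cap and show that the exactly uniform spacing by $\Dv$ is what makes every acceleration atom land on the grid, not between two grid points. Once that structural step is pinned down, the rest is a routine verification that the resulting linear-in-$f_j^\infty$ system is consistent and admits positive solutions summing to $\rho$.
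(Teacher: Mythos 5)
Your proposal is correct and follows essentially the same route as the paper: reading off the coefficient of each atom of $Q[f^\infty,f^\infty]$ is equivalent to the paper's use of the hat test functions $\phi_j$, your requirement that every acceleration atom $\delta_{\min(v_h+\Dv,\vm)}$ land on the grid is exactly the paper's condition that $\phi_j(\min\{v_h+\Dv,\vm\})=1$ only when $v_{j-1}+\Dv=v_j$, and your recursion starting from $F_2=\rho P/(1-P)$ reproduces the paper's $f_1^\infty=\rho(1-2P)/(1-P)$ and subsequent coefficients. One small correction: the system is quadratic, not linear, in each $f_j^\infty$ (your own displayed coefficient contains $f_j^\infty F_j$ with $f_j^\infty$ a summand of $F_j$), exactly as in the paper, where at each step one selects the unique positive root of a quadratic whose constant term $P\rho f_{j-1}^\infty$ is positive; this does not affect the validity of your recursive scheme.
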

\begin{proof}
	Without loss of generality suppose that $\{v_j\}_{j=1}^N$ is an ordered set of velocities such that $0\leq v_1<\dots<v_N\leq \vm$. Since the distribution function $f^{\infty}(v)=\sum_{j=1}^N f^{\infty}_j \delta_{v_j}(v)$ is a weak stationary solution of the $\delta$ model, it satisfies the following steady weak form of equation \eqref{eq:model2}:
	\begin{equation*}
	\int_V\int_V \left(\int_V \phi(v) \Avvvr \dvu\right) f^{\infty}(\vb) f^{\infty}(\va) \dvua \dvub - \rho \int_V \phi(v) f^{\infty}(v) \dvu=0,
	\end{equation*}
	where $\phi\in C_c(V)$ is a test function, with $C_c(V)$ the space of continuous functions having compact support contained in $V$, and the probability density $A$ is given in \eqref{eq:Adelta}.
	Substituting the expression of $f^{\infty}$ in the above equation we obtain
	\begin{equation}\label{eq:eq-th}
	\begin{aligned}
		&(1-P)\sum_{k=1}^N \sum_{h=1}^k \phi(v_h) f^{\infty}_h f^{\infty}_k + (1-P)\sum_{k=1}^N \sum_{h=k+1}^N \phi(v_k) f^{\infty}_h f^{\infty}_k \\
		&+ P \rho \sum_{h=1}^N \phi(\min\{v_h+\Dv,\vm\}) f^{\infty}_h - \rho \sum_{j=1}^N \phi(v_j) f^{\infty}_j=0.
	\end{aligned}
	\end{equation}
	The proof will be organized as follows: in order to determine an equation for the $f^{\infty}_j$'s, we consider a particular family of test functions $\phi_j$ defined as piecewise linear functions such that $\phi_j(v_j)=1$ and $\phi_j(v_i)=0$, $\forall\;i\neq j$; in this way, first we find an equation for $f^{\infty}_1$, then we show that $f_2^{\infty}\neq 0$ if $v_2=v_1+\Dv$ and finally by induction we prove that if $f^{\infty}_j\neq 0$, then $v_j=v_1+j\Dv$, for some $j\in\{1,\dots,N\}$.\\
	Let $j=1$, equation \eqref{eq:eq-th} with $\phi=\phi_1$ becomes
	\[
		(1-P)\sum_{k=1}^N f^{\infty}_1 f^{\infty}_k + (1-P) \sum_{h=2}^N f^{\infty}_h f^{\infty}_1 + P \rho \sum_{h=1}^N \phi_1(v_1+\Dv) f^{\infty}_h - \rho f^{\infty}_1=0.
	\]
	Due to the particular construction of $\phi_1$ and using $\sum_{j=1}^N f_j^\infty=\rho$, the above expression reduces to
	\[
		-(1-P)(f^{\infty}_1)^2+(1-2P)\rho f^{\infty}_1=0
	\]
	which admits the two roots $f^{\infty}_1=0$ and $f^{\infty}_1=\rho\frac{1-2P}{1-P}$. If  $P>1/2$, only $f^{\infty}_1=0$ is acceptable, because the other root is negative. If instead $P<1/2$, both roots can be accepted, but only $f^{\infty}_1=\rho\frac{1-2P}{1-P}>0$ is stable. This argument will be used for selecting a single root throughout the proof.\\
	Now, let $j=2$ and $\phi=\phi_2$. Equation \eqref{eq:eq-th} writes as
	\begin{equation}\label{eq:f2-th}
		-(1-P)(f^{\infty}_2)^2+\left[(1-2P)\rho-2(1-P)f^{\infty}_1\right]f^{\infty}_2+P\rho\sum_{h=1}^N \phi_2(\min\{v_h+\Dv,\vm\}) f^{\infty}_h = 0.
	\end{equation}
	Note that $\phi_2(\min\{v_h+\Dv,\vm\})=1$ if $h=1$ and $v_1+\Dv=v_2$. While $\phi_2(\min\{v_h+\Dv,\vm\})=0$ otherwise, due to the particular choice of $\phi_2$ which is centered in $v_2$ and can be taken with support smaller than $2\abs{v_2-v_1-\Dv}$ around this point. Then, if $v_2\neq v_1+\Dv$, the constant coefficient of \eqref{eq:f2-th} is zero for all $P\in[0,1]$ and the solutions of the equation are $f_2^\infty=0$ or $f_2^\infty=\rho \frac{1-2P}{1-P}-2f_1^\infty$. Exploiting the structure of $f_1^\infty$ and the fact that $f_j^\infty \geq 0$, only $f_2^\infty=0$ is the admissible solution for all values of $P\in[0,1]$.\\
	Instead, if $v_2=v_1+\Dv$, the third term in equation \eqref{eq:f2-th} is $P\rho f^{\infty}_1\geq 0$, for all $P\in[0,1]$. More precisely, if $P\geq 1/2$ then $f^{\infty}_1=0$, thus the constant coefficient vanishes and again one concludes that $f_2^\infty=0$. While if $P<1/2$, $P\rho f^{\infty}_1$ is positive and since the discriminant $\mathcal{D}=\left((1-2P)\rho-2(1-P)f^{\infty}_1\right)^2+4P(1-P)\rho f^{\infty}_1$ of equation~\eqref{eq:f2-th} is positive and the leading coefficient is negative, the equation has two real roots with opposite signs. Therefore
	\[
		f^{\infty}_2=\frac{-(1-2P)\rho+2(1-P)f^{\infty}_1-\sqrt{\mathcal{D}}}{-2(1-P)}>0
	\]
	and this is the only case in which $f_2^\infty$ can be non-zero.\\
	We now proceed by induction. Suppose that $v_k-v_1$ is an integer multiple of $\Dv$ and
	\[
	f^{\infty}_k=\begin{cases}
	0, & \text{if $P\geq 1/2$}\\
	\frac{-(1-2P)\rho+2(1-P)\sum_{l=1}^{k-1}f^{\infty}_l-\sqrt{\mathcal{D}_k}}{-2(1-P)} & \text{if $P<1/2$}
	\end{cases}
	\]
	for all $k=3,\dots,j-1$, where $\mathcal{D}_k=\left((1-2P)\rho-2(1-P)\sum_{l=1}^{k-1}f^{\infty}_l\right)^2+4P(1-P)\rho\sum_{l=1}^{k-1}f^{\infty}_l$. We show that $f^{\infty}_j$ can be non-zero only if $v_j=v_1+j\Dv$, for $j\in\{1,\dots,N\}$.\\
	Taking the test function $\phi=\phi_j$, the equation for $f^{\infty}_j$ writes as
	\begin{equation}\label{eq:fj-th}
		-(1-P)(f^{\infty}_j)^2+\left[(1-2P)\rho-2(1-P)\sum_{l=1}^{j-1} f^{\infty}_l\right]f^{\infty}_j + P\rho \sum_{h=1}^N \phi_j(\min\{v_h+\Dv,\vm\}) f^{\infty}_h=0.
	\end{equation}
	Note that $\phi_j(\min\{v_h+\Dv,\vm\})=1$ if $h=j-1$ and $v_{j-1}+\Dv=v_j$. While $\phi_j(\min\{v_h+\Dv,\vm\})=0$ otherwise, due to the particular choice of $\phi_j$ which is centered in $v_j$ and can be taken with support smaller than $2\abs{v_j-v_{j-1}-\Dv}$ around this point. If $v_j=v_1+j\Dv$, then $v_j=v_{j-1}+\Dv$ and the above equation becomes
	\[
		-(1-P)(f^{\infty}_j)^2+\left[(1-2P)\rho-2(1-P)\sum_{l=1}^{j-1} f^{\infty}_l\right]f^{\infty}_j + P\rho f^{\infty}_{j-1}=0
	\]
	which has two real roots. If $P\geq 1/2$, using the inductive hypothesis $f_k^\infty=0$ for all $k\leq j-1$. Thus $f_j^\infty=\rho\frac{1-2P}{1-P}$ (which is again not acceptable since it is negative) or $f_j^\infty=0$, confirming the induction. If $P<1/2$, we have two real roots with opposite signs, so one proves that $f^{\infty}_j$ can be chosen strictly positive.\\
	If instead $v_j\neq v_1+j\Dv$ then the constant term of equation \eqref{eq:fj-th} is zero and the two roots are $f_j^{\infty}=0$ and $f^{\infty}_j=S_j=\frac{(1-2P)\rho-2(1-P)\sum_{l=1}^{j-1} f^{\infty}_l}{1-P}$ which is negative for all values of $P\in[0,1]$ using the Lemma \ref{th:lemma} below.
\end{proof}

In the previous proof we use the following technical fact.

\begin{lemma}\label{th:lemma}
	Let $P$ be a given function of the density $\rho\in[0,\rho_{\max}]$ such that $P\in[0,1]$. Consider $\{f_j\}_{j=1}^K\in\mathbb{R}$ defined as
	\[
		f_j=\frac{2(1-P)\sum_{l=1}^{j-1}f_l-(1-2P)\rho-C_j}{-2(1-P)}
	\]
	with $C_j>0$. Assume that $f_j$ is positive for all $j$. Then
	\[
		S_k=\frac{(1-2P)\rho-2(1-P)\sum_{l=1}^{k-1}f_l}{1-P}<0
	\]
	 for all $2\leq k \leq K$.
\end{lemma}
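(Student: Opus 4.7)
\medskip

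\noindent\textbf{Proof proposal.}
The natural plan is to turn the definition of $f_j$ and $S_k$ into a simple telescoping recurrence, so that the positivity assumption on the $f_j$'s forces the $S_k$'s to decrease below their initial value. First I would introduce the partial sums $T_m=\sum_{l=1}^{m}f_l$ (with $T_0=0$), so that the hypothesis on $f_j$ rewrites compactly as
\[
    2(1-P)\,f_j \;=\; (1-2P)\rho \;-\; 2(1-P)T_{j-1} \;+\; C_j ,
\]
and the quantity to estimate becomes $(1-P)S_k=(1-2P)\rho-2(1-P)T_{k-1}$. Note that the statement implicitly assumes $P\neq 1$ (otherwise neither $f_j$ nor $S_k$ are defined); this is consistent with the fact that Lemma~\ref{th:lemma} is invoked in Theorem~\ref{th:continuous-eq} only in the regime $P<1/2$.

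The base case $k=2$ is the heart of the argument. Applying the recursion for $j=1$ with $T_0=0$ gives $2(1-P)f_1=(1-2P)\rho+C_1$, and substituting into the definition of $S_2$ one immediately obtains
\[
    (1-P)S_2 \;=\; (1-2P)\rho - 2(1-P)f_1 \;=\; -C_1 ,
\]
which is strictly negative since $C_1>0$ and $1-P>0$. Hence $S_2<0$.

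For the induction step, I would observe that, for $k\ge 3$, the difference $T_{k-1}-T_{k-2}=f_{k-1}$ yields the telescoping identity
\[
    S_k - S_{k-1} \;=\; -2\,f_{k-1}.
\]
Since by hypothesis $f_{k-1}>0$, this gives $S_k<S_{k-1}$, and the induction propagates the negativity obtained in the base case to all $k\le K$. Equivalently, one can write the closed form $S_k=S_2-2\sum_{l=2}^{k-1}f_l=-\frac{C_1}{1-P}-2\sum_{l=2}^{k-1}f_l<0$.

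I do not expect any real obstacle here: the only subtle point is bookkeeping the sign when dividing by $-2(1-P)$ in the definition of $f_j$ and recognising that the combination $(1-2P)\rho-2(1-P)T_{j-1}$ appearing in both $f_j$ and $S_k$ is what allows the cancellation in the base case. Once that is noticed, the rest is a one-line recursion.
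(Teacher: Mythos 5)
Your proposal is correct and follows essentially the same route as the paper's proof: the base case $S_2=-C_1/(1-P)<0$ obtained by substituting the $j=1$ relation, followed by the telescoping identity $S_k=S_{k-1}-2f_{k-1}$ and the positivity of the $f_j$'s. Your explicit remark that $P\neq 1$ is implicitly required (so that $1-P>0$) is a small but accurate clarification that the paper leaves unstated.
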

\begin{proof}
	We proceed by induction on $k$. Let $k=2$
	\[
		S_2=\frac{(1-2P)\rho-2(1-P)f_1}{1-P}=-\frac{C_1}{1-P}<0.
	\]
	Suppose $S_j<0$, $j=3,\dots,k-1$, then $S_k<0$. In fact
	\[
		S_k=S_{k-1}-2f_{k-1}<0.
	\qedhere
	\]
\end{proof}

\subsection{Discretization of the model}
\label{sec:discretedelta}

Observe that Theorem \ref{th:continuous-eq} ensures the existence of a class of steady solutions for the $\delta$ model which are characterized by the fact that the total mass of vehicles on the road is distributed only on the velocities which are multiples of $\Dv$. We cannot prove the uniqueness of such a class of steady solutions. However we can show by numerical evidence that the asymptotic distributions of the $\delta$ model are only of the type stated by Theorem \ref{th:continuous-eq}. Thus, in this subsection we introduce a discretization of the model \eqref{eq:Adelta}.

To this end, the explicit formulation of the gain term is now useful. Notice that the Dirac delta function at $v=\min\left\{\vb+\Dv,\vm\right\}$ can be split as
\[
	\delta_{\min\left\{\vb+\Dv,\vm\right\}}(v)=
	\begin{cases}
		\delta_{\vb+\Dv}(v), &\text{if\; $\vb\in\left[0,\vm-\Dv\right]$}\\
		\delta_{\vm}(v), &\text{if\; $\vb\in\left(\vm-\Dv,\vm\right]$}
	\end{cases}
\]
because the velocity jump of size $\Dv$, leading to the output velocity $v=\vb+\Dv$, can be performed only if $\vb \leq \vm-\Dv$. If instead  $\vb\in\left(\vm-\Dv,\vm\right]$, the post-interaction velocity will be $v=\vm$.
Thus the gain term of the $\delta$ model can be written as
\begin{equation}\label{eq:delta_model}
\begin{aligned}
	G[f,f](t,v)=&\eta(1-P) f(t,v) \left[ \int_v^{\vm}\fa \dvua + \int_v^{\vm}\fb \dvub \right]\\
	&+ \eta P \rho \left[ f(t,v-\Dv)H_{\Dv}(v) + \delta_{\vm}(v)\int_{\vm-\Dv}^{\vm}\fb \dvub\right]
\end{aligned}
\end{equation}
where $H_{\alpha}(x)$ denotes the Heaviside step function with jump located in $\alpha$.
The last term in the expression of $G$ means that, as a result of the microscopic interactions, the mass $P\rho\int_{\vm-\Dv}^{\vm}\fb \dvub$ is allocated entirely to the speed $\vm$. Note that, in space-nonhomogeneous models, $\fb$ and $\fa$ may refer to distributions evaluated at different locations in space, see for instance \cite{KlarWegener96} and \cite{klar1997Enskog}. For this reason we keep the integrals over field and candidate particles separate.

Suppose for simplicity that the acceleration parameter  $\Dv$ satisfies $\Dv=\vm/T$ with $T\in\mathbb{N}$. We consider a discretization of the velocity space defining the {\em velocity cells} $I_j=[(j-\tfrac32)\dv, (j-\tfrac12)\dv]\cap [0,\vm]$, for $j=1,\dots,N$. Note that all cells have amplitude $\dv=\vm/(N-1)$ except $I_1$ and $I_N$ which have amplitude $\dv/2$.

We consider a piecewise constant approximation of the kinetic distribution so that
\begin{equation} \label{eq:f:discrete}
f(t,v) \approx f_N(t,v) = \sum_{j=1}^N f_j(t) \frac{\chi_{I_j}(v)}{|I_j|},
\end{equation}
where $f_j$ represents the number of vehicles traveling with velocity $v \in I_j$. 

By integrating the kinetic equation \eqref{eq:model1} over the cells $I_j$ and using $f_N(t,v)$ in place of $f(t,v)$ we obtain the following system of ordinary differential equations
\begin{equation}\label{eq:discretesys}
		\ds{f'_j(t)=Q_j[f,f](t):=\int_{I_j}Q[f,f](t,v)\dvu},
\end{equation}
whose initial conditions $f_1(0),\dots,f_N(0)$ are such that
\[
	\sum_{j=1}^N f_j(0)=\int_V f(t=0,v)\dvu=\rho
\]
and $\rho$ is the initial density, which remains constant during the time evolution in the spatially homogeneous case.

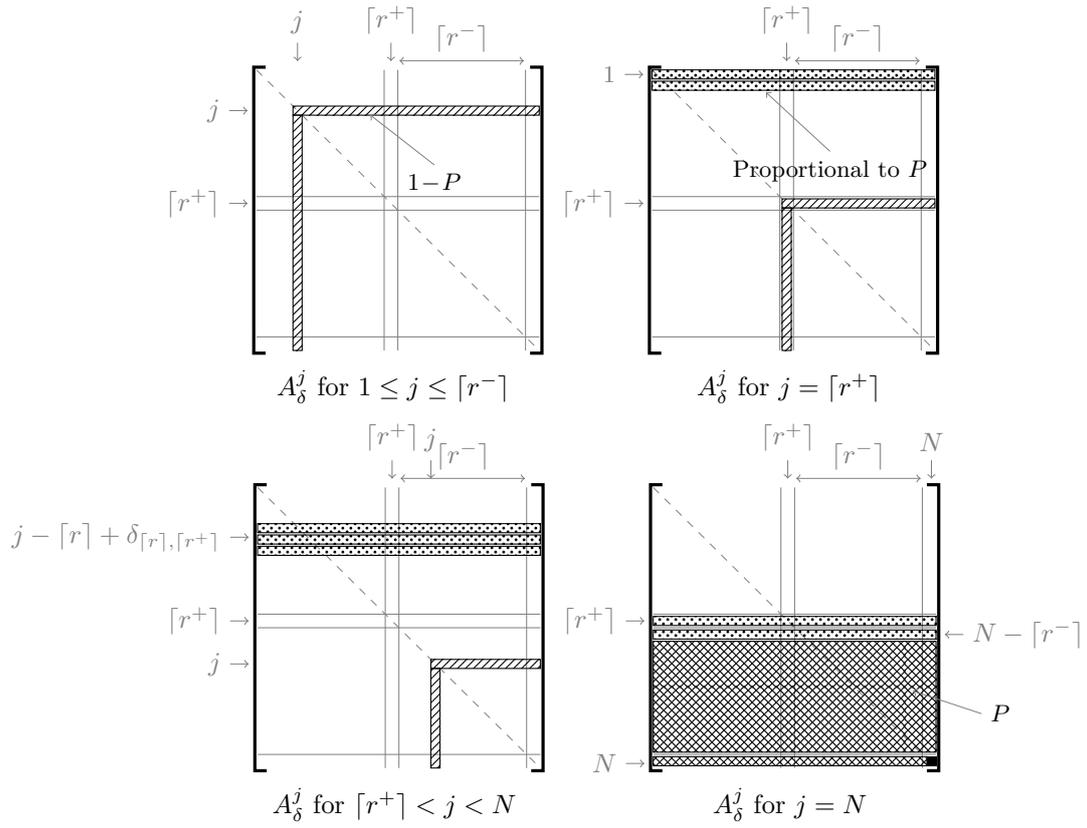
\begin{figure}
	\centering
	\begin{tikzpicture}[y={(0,-1cm)},scale=1.2]
	%Matrice A per 0<=j<\pisup{\rme}
	\begin{scope}
	\skeletongeneric
	\node[anchor=center] at (1.5,3.5) {$A^j_{\delta}$ for $1\leq j\leq \pisup{\rme}$};
	\filldraw[pattern=\patUnoMenoP] (0.4,0.4) rectangle (3.1,0.5);
	\filldraw[pattern=\patUnoMenoP] (0.4,0.5) rectangle (0.5,3.1);
	\draw[help lines,->,yshift=-0.45cm]
	(-0.3,0) node[left]{$j$} -- (-0.1,0);
	\draw[help lines,->,xshift=0.45cm]
	(0,-0.3) node[above]{$j$} -- (0,-0.1);
	\draw[help lines,->] (1.95,1.05) node[below,black]{\small$1\!-\!P$} -- (1.25,0.5);
	\end{scope}
	\end{tikzpicture}
	\begin{tikzpicture}[y={(0,-1cm)},scale=1.2]
	%Matrice A per j=\pisup{\rp}
	\begin{scope}[xshift=4.5cm]
	\skeletongeneric
	\node[anchor=center] at (1.5,3.5) {$A^j_{\delta}$ for $j=\pisup{\rp}$};
	\filldraw[pattern=\patUnoMenoP] (1.425,1.425) rectangle (3.1,1.525);
	\filldraw[pattern=\patUnoMenoP] (1.425,1.525) rectangle (1.525,3.1);
	\filldraw[pattern=\patProportionalP] (0,0) rectangle (3.1,0.1);
	\filldraw[pattern=\patProportionalP] (0,0.125) rectangle (3.1,0.225);
	\draw[help lines,->,yshift=-0.05cm]
	(-0.3,0) node[left]{\small$1$} -- (-0.1,0);
	%\draw[help lines,->,yshift=-0.175cm]
	%(-0.3,0) node[left]{\small$2$} -- (-0.1,0);
	\draw[help lines,->] (1.95,0.9) node[below,black]{{\small Proportional to $P$}} -- (1.25,0.25);
	\end{scope}
	\end{tikzpicture}
	\begin{tikzpicture}[y={(0,-1cm)},scale=1.2]
	%Matrice A per \pisup{\rp}+1<j<n-1
	\begin{scope}[xshift=9cm]
	\skeletongeneric
	\node[anchor=center] at (1.5,3.5) {$A^j_{\delta}$ for $\pisup{\rp}<j<N$};
	\filldraw[pattern=\patProportionalP] (0,0.4) rectangle (3.1,0.5);
	\filldraw[pattern=\patProportionalP] (0,0.525) rectangle (3.1,0.625);
	\filldraw[pattern=\patProportionalP] (0,0.65) rectangle (3.1,0.75);
	\filldraw[pattern=\patUnoMenoP] (1.9,1.9) rectangle (3.1,2);
	\filldraw[pattern=\patUnoMenoP] (1.9,2) rectangle (2,3.1);
	\draw[help lines,->,yshift=-1.95cm]
	(-0.3,0) node[left]{$j$} -- (-0.1,0);
	\draw[help lines,->,xshift=1.9cm]
	(0,-0.3) node[above]{$j$} -- (0,-.1);
	\draw[help lines,->,yshift=-0.55cm]
	(-0.3,0) node[left]{$j-\pisup{r}+\delta_{\pisup{r},\pisup{\rp}}$} -- (-0.1,0);
	\end{scope}
	\end{tikzpicture}
	\begin{tikzpicture}[y={(0,-1cm)},scale=1.2]
	%Ultima matrice A
	\begin{scope}[xshift=13.5cm]
	\skeletongeneric
	\node[anchor=center] at (1.5,3.5) {$A^j_{\delta}$ for $j=N$};
	
	\filldraw[pattern=\patProportionalP] (0,1.425) rectangle (3.1,1.525);
	\filldraw[pattern=\patProportionalP] (0,1.575) rectangle (3.1,1.675);
	
	\filldraw[pattern=\patP] {(0,1.7) rectangle (3.1,2.925)};
	
	\fill[pattern=\patUno] (3.0,2.975) rectangle (3.1,3.075);
	
	\filldraw[pattern=\patP] (0,2.975) rectangle (3.0,3.075);
	
	\draw[help lines,->,yshift=-1.625cm]
	(3.4,0) node[right]{$N-\pisup{\rme}$} -- (3.2,0);
	\draw[help lines,->,yshift=-3.05cm]
	(-0.3,0) node[left]{$N$} -- (-0.1,0);
	\draw[help lines,->,xshift=3.05cm]
	(0,-0.3) node[above]{$N$} -- (0,-0.1);
	\draw[help lines,->] (3.6,2.5) node[right,black]{\small$P$} -- (2.85,2.25);
	\end{scope}
	\end{tikzpicture}
	\caption{Structure of the probability matrices of the $\delta$ model, with $\Dv=\vm/2$.\label{fig:generic-matrices}}
\end{figure}

We set $r:=\Dv/\dv\in\mathbb{R}^+$ and we define $\rp:=r+\frac12$, $\rme:=r-\frac12$. Then $I_{\pisup{\rp}}$ is the cell which contains $v=\Dv$, where $\pisup{\rp}$ denotes the integer part of $\rp$. By computing the right hand side of the ODE system \eqref{eq:discretesys}, we obtain explicitly
\begin{subequations} \label{eq:Jdelta:allj}
	\begin{align}\label{eq:Jsmallj}
	\frac{1}{\eta}Q_j[f,f](t)&=
	(1-P)f^jf_j
	+(1-P)f_j\sum_{k=j+1}^{N}f^k\\
	&+(1-P)f^j\sum_{h=j+1}^{N}f_h-f_j\sum_{k=1}^{N}f^k,
	\qquad\quad\quad\;
	\text{for $j=1,\dots,\pisup{\rme}$} \nonumber
	%\end{align}
	%\begin{align}
	\\ \label{eq:Jrp1}
	\frac{1}{\eta}Q_j[f,f](t)&=
	(1-P)f^jf_j+(1-P)f_j\sum_{k=j+1}^{N}f^k
	+(1-P)f^j\sum_{h=j+1}^{N}f_h\\
	&+\uwave{P\rho\left[2f_1\min\{\frac12,\pisup{\rp}-\frac12-r\}+\delta_{\pisup{r},\pisup{\rme}}f_2(\pisup{\rme}-r)\right]}\nonumber\\
	&-f_j\sum_{k=1}^{N}f^k,\qquad\qquad\qquad\qquad\qquad\qquad\qquad \text{for $j=\pisup{\rp}$}\nonumber
	\\ \label{eq:Jallj}
	\frac{1}{\eta}Q_j[f,f](t)&=
	(1-P)f^jf_j+(1-P)f_j\sum_{k=j+1}^{N}f^k
	+(1-P)f^j\sum_{h=j+1}^{N}f_h\\
	&+\uwave{P\rho\left[(1+\delta_{\pisup{r},\pisup{\rp}}\delta_{j,\pisup{\rp}+1})f_{j-\pisup{r}}(1+r-\pisup{r})+f_{j-\pisup{r}+1}(-r+\pisup{r})\right]} \nonumber
	\\
	&-f_j\sum_{k=1}^{N}f^k,\qquad\qquad\qquad\qquad\qquad\qquad\qquad \text{for $j=\pisup{\rp}+1,\dots,N-1$}\nonumber
	%\end{align}
	%\begin{align}
	\\ \label{eq:Jlastj}
	\frac{1}{\eta}Q_N[f,f](t)&=
	(1-P)f^Nf_N\\
	&+\uwave{P\rho\delta_{\pisup{r},\pisup{\rp}}\left[f_{N-\pisup{\rp}}(r-\pisup{\rme})+f_{N-\pisup{\rme}}(\pisup{\rp}-\frac12-r)\right]}\nonumber\\
	&\uwave{P\rho f_{N-\pisup{\rme}}\left[\frac12\delta_{\pisup{r},\pisup{\rme}}+(r-\pisup{\rme}+\frac12)\right]+P\rho\sum_{h=N-\pisup{\rp}+2}^N f_h} \nonumber \\
	&-f_N\sum_{k=1}^{N}f^k. \nonumber
	\end{align}
\end{subequations}
where here $\delta_{i,j}$'s are Kronecker's delta's. The terms with a wavy underline are those deriving from the acceleration term.
In the formulae above, the position of the index of the components of $\f=[f_1,\dots,f_N]^{\tr}\in\mathbb{R}^N$ distinguishes the distribution of the field and of the candidate vehicles: bottom right for the candidate vehicles (as in $f_h$), top right for the field vehicles (as in $f^k$).
In vector form:
\begin{equation}
	\frac{d}{dt}f_j=\eta \left[\f^{\tr} A^j_{\delta} \f - \f^{\tr} \mathbf{e}_j \mathbf{1}^{\tr}_N \f \right],\quad j=1,\dots,N
    \label{eq:delta_vectsys}
\end{equation}
where $\mathbf{e}_j\in\mathbb{R}^N$ denotes the vector with a $1$ in the $j$-th component and $0$'s elsewhere, $\mathbf{1}^{\tr}_N=[1,\dots,1]\in\mathbb{R}^N$. The matrices $A^j_{\delta}$ have a sparse structure, shown in Fig. \ref{fig:generic-matrices} in which the nonzero elements are shaded with different hatchings, corresponding to the different values of the elements, as indicated in the panels in which they appear for the first time.

As it can be checked using \eqref{eq:Jdelta:allj}, these matrices are stochastic with respect to the index $j$, i.e.
 $\sum_{j=1}^N \left(A^j_{\delta}\right)_{hk}=1$, $\forall\,h,k\in\left\{1,\dots,N\right\}$. This property comes from Assumption \ref{ass:A1}, and it guarantees mass conservation. 

Recall that the elements of the matrix $\left(A_{\delta}^j\right)_{hk}$ are the probabilities that the candidate vehicle with velocity 
%$v_h$
in $I_h$, 
interacting with a field vehicle with velocity 
%$v_k$ 
in $I_k$,
acquires 
%the velocity $v_j$.
a velocity in $I_j$.
The fact that these matrices are sparse means that 
%the velocity $v_j$ 
a velocity in $I_j$ 
can be acquired only for special values of the velocity of candidate and field vehicles. In particular, the $j$-th row of the matrix $A^j_{\delta}$ contains the probability of what we called ``false gains'' in Remark \ref{rem:falsegains}, that is the probability that the candidate vehicle {\em does not} change its speed. The non zero elements of the $j$-th column are the probabilities that a candidate vehicle acquires 
%the speed $v_j$ 
a speed in $I_j$ 
by braking down to the speed of the leading vehicle. The non zero rows, located at $h=j-\pisup{r}+\delta_{\pisup{r},\pisup{\rp}}$, $h-1$ and $h+1$, contain the probabilities that the candidate vehicle accelerates by $\Delta v$, acquiring therefore 
%the velocity $v_h+\Delta v=v_j$.
a velocity in $\vb+\Dv\in I_j$ starting from a velocity $\vb$ in $I_{h-1}$, $I_h$ or $I_{h+1}$.
The band between the rows $j-\pisup{r}+\delta_{\pisup{r},\pisup{\rp}}+2$ and $j-1$ is filled with zeros, because in the $\delta$ model the acceleration is quantized. As we will see in Section \ref{sec:discretechi}, this band will be filled by non zero elements in the $\chi$ model, where the acceleration is distributed uniformly between $[0,\Delta v]$.

\begin{table}[!t]
	\begin{center}
		\begin{tabular}{r|l|c}
			Parameter & Description & Definition \\
			\hline
			\hline
			$N$ & number of discrete speeds & \\                          
			$\dv$ & cell amplitude & $\dv=\frac{\vm}{N-1}$ \\
			$r$ & ratio between the speed jump $\Dv$ and the cell amplitude $\dv$ & $r=\frac{\Dv}{\dv}$ \\
			$T$ & number of speed jumps $\Dv$ contained in $[0,\vm]$ & $T=\frac{\vm}{\Dv}$\\
			\hline
		\end{tabular}
	\end{center}
	\caption{Table of the numerical parameters.\label{tab:parameters}}
\end{table}

In Table \ref{tab:parameters} we summarize the numerical parameters introduced in order to discretize the continuous-velocity model.

\paragraph{$\dv$ as integer sub-multiple of $\Dv$.}

\begin{figure}
	\centering
	\begin{tikzpicture}[y={(0,-1cm)},scale=1]
	%Matrice per 0<=j<r
	\begin{scope}
	\skeleton
	\node[anchor=center] at (1.5,3.6) {\large$A^j_{\delta}$ for $1\leq j\leq r$};
	\filldraw[pattern=\patUnoMenoP] (0.4,0.4) rectangle (3.1,0.5);
	\filldraw[pattern=\patUnoMenoP] (0.4,0.5) rectangle (0.5,3.1);
	\draw[help lines,->,yshift=-0.45cm]
	(-0.3,0) node[left]{$j$} -- (-0.1,0);
	\draw[help lines,->,xshift=0.45cm]
	(0,-0.3) node[above]{$j$} -- (0,-0.1);
	\draw[help lines,->] (1.6,1.2) node[below,black]{\small$1\!-\!P$} -- (1.25,0.5);
	\end{scope}
	\end{tikzpicture}
	\begin{tikzpicture}[y={(0,-1cm)},scale=1]
	%Matrice per r<j<N
	\begin{scope}[xshift=4cm]
	\skeleton
	\node[anchor=center] at (1.5,3.6) {\large$A^j_{\delta}$ for $r< j< N$};
	\filldraw[pattern=\patP] (0,0.6) rectangle (3.1,0.7);
	\filldraw[pattern=\patUnoMenoP] (1.6,1.6) rectangle (3.1,1.7);
	\filldraw[pattern=\patUnoMenoP] (1.6,1.7) rectangle (1.7,3.1);
	\draw[help lines,->,yshift=-1.65cm]
	(-0.3,0) node[left]{$j$} -- (-0.1,0);
	\draw[help lines,->,yshift=-0.65cm]
	(-0.3,0) node[left]{$j-r$} -- (-0.1,0);
	\draw[help lines,->,xshift=1.65cm]
	(0,-0.3) node[above]{$j$} -- (0,-0.1);
	\draw[help lines,->] (1.6,1.2) node[below,black]{\small$P$} -- (1.25,0.75);
	\end{scope}
	\end{tikzpicture}
	\begin{tikzpicture}[y={(0,-1cm)},scale=1]
	%Matrice per j=N
	\begin{scope}[xshift=8cm]
	\skeleton
	\node[anchor=center] at (1.5,3.6) {\large$A^j_{\delta}$ for $j=N$};
	\filldraw[pattern=\patP] (0,2) rectangle (3.1,3);
	\filldraw[pattern=\patP] (0,3) rectangle (3,3.1);
	\fill[pattern=\patUno] (3,3) rectangle (3.1,3.1);
	\draw[help lines,->,yshift=-2.05cm]
	(-0.3,0) node[left]{$N-r$} -- (-0.1,0);
	\draw[help lines,->,yshift=-3.05cm]
	(-0.3,0) node[left]{$N$} -- (-0.1,0);
	\draw[help lines,->,xshift=3.05cm]
	(0,-0.3) node[above]{$N$} -- (0,-0.1);
	\end{scope}
	\end{tikzpicture}
	\caption{Structure of the probability matrices of the $\delta$ model with $\dv$ integer sub-multiple of $\Dv$.\label{fig:matricidelta}}
\end{figure}

\begin{figure}[t!]
\centering
\includegraphics[width=0.49\textwidth]{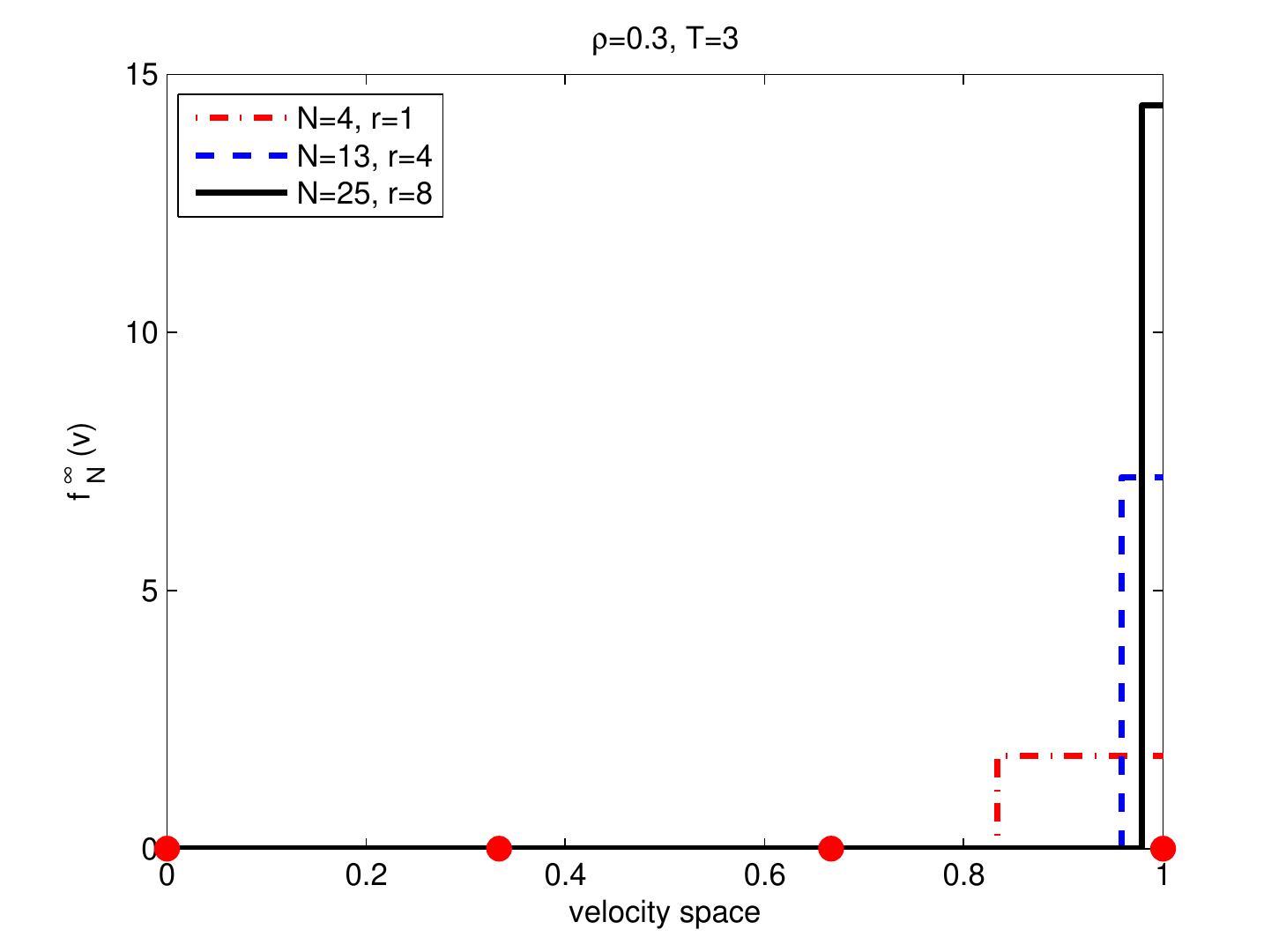}
\hfill
\includegraphics[width=0.49\textwidth]{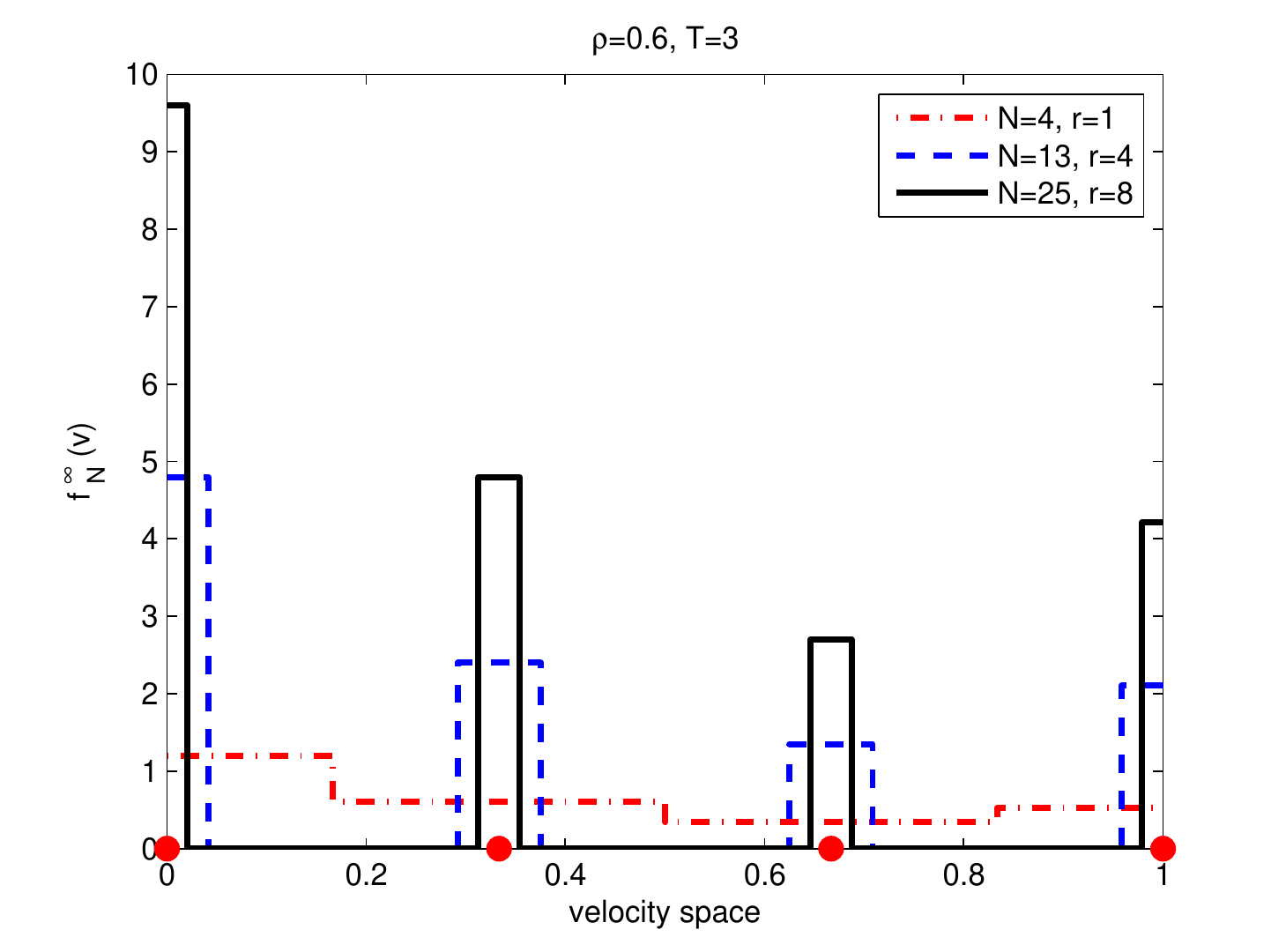}
\\
\includegraphics[width=0.49\textwidth]{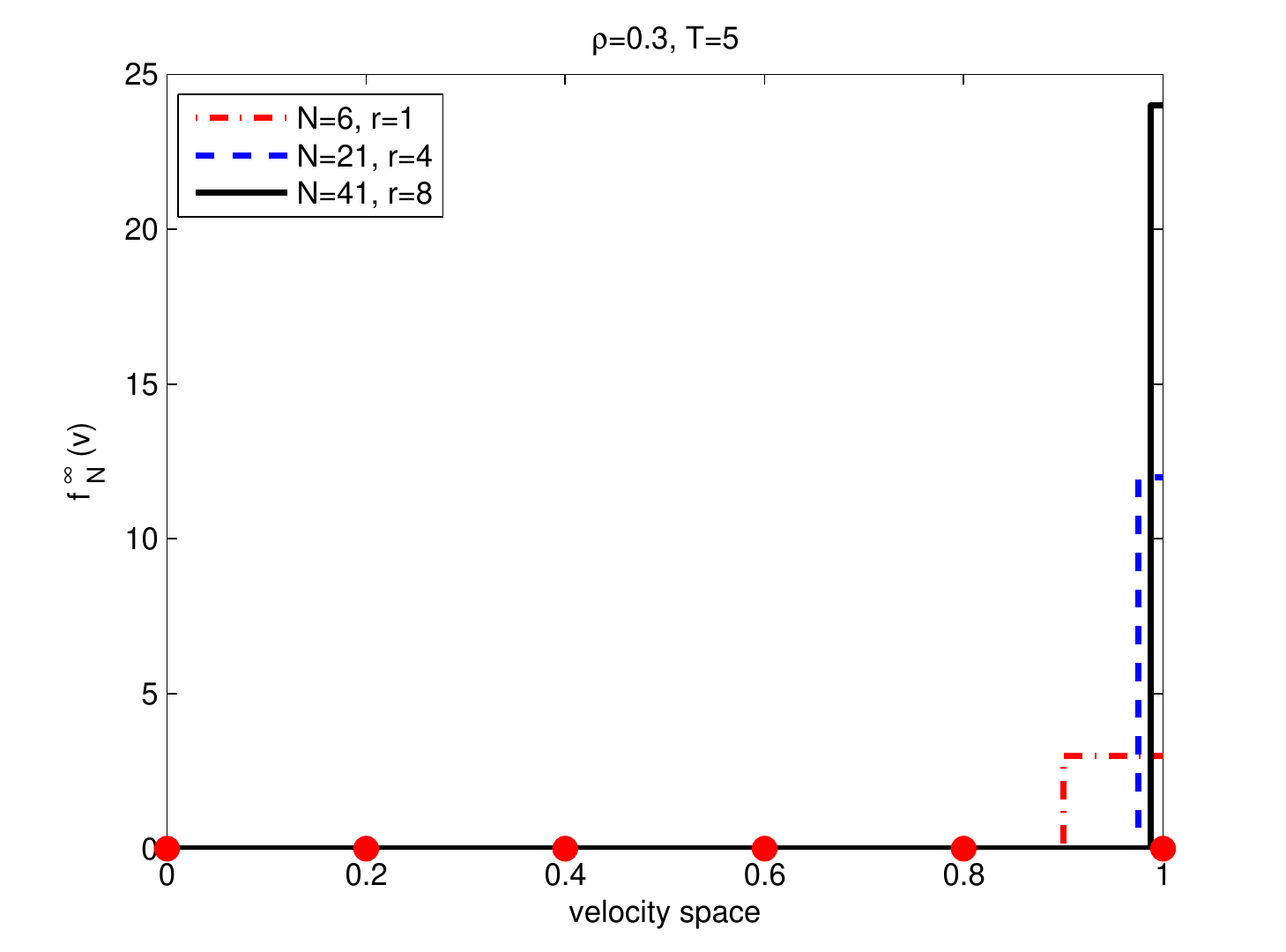}
\hfill
\includegraphics[width=0.49\textwidth]{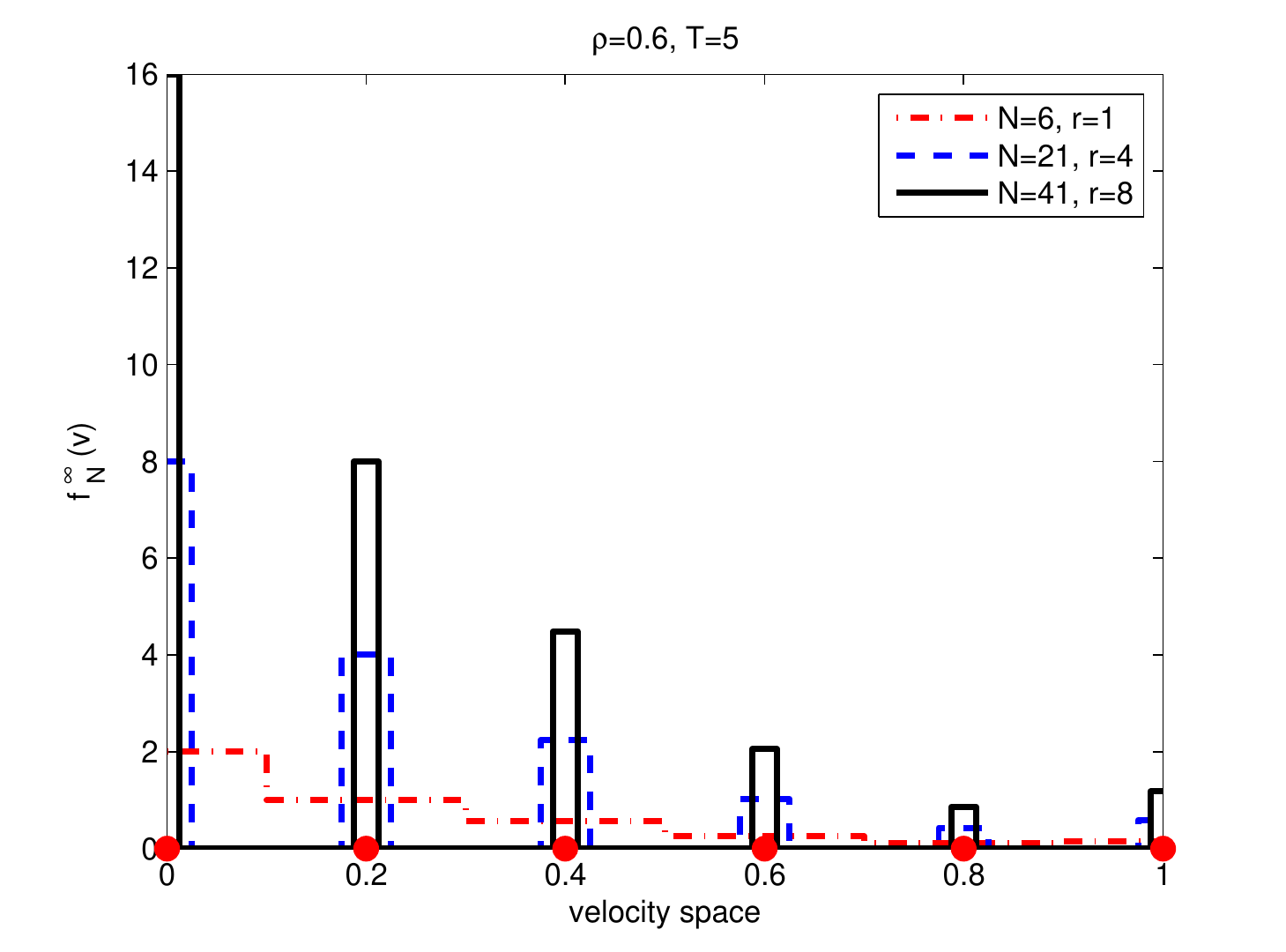}
\caption{Approximation of the asymptotic kinetic distribution function obtained with two acceleration terms $\Dv=1/T$, $T=3$ (top), $T=5$ (bottom), and $N=rT+1$ velocity cells, with $r\in\left\{1,4,8\right\}$; $\rho=0.3$ (left) and $\rho=0.6$ (right) are the initial densities. We mark with red circles on the x-axes the center of the $T+1$ cells obtained with $r=1$.\label{fig:delta_eq}}
\end{figure}

For the special choice of the velocity grid which ensures that $r=\Dv/\dv\in\mathbb{N}$, then the formulae \eqref{eq:Jdelta:allj} simplify and the resulting interaction matrices are given in Figure \ref{fig:matricidelta}. Notice that the rows $j-r\pm 1$ are filled with zeros. In fact, since $\dv$ is an integer sub-multiple of $\Dv$, a velocity in $I_j$ can be obtained as result of an acceleration only if the pre-interaction speed is a velocity in $I_{j-r}$.
 
The structure of the matrices $A^j_{\delta}$ determines the equilibrium of the discrete model \eqref{eq:delta_vectsys}. In Figure~\ref{fig:delta_eq} we show the function $f^{\infty}_N(v)=\lim_{t\to\infty}f_N(t,v)$ obtained by integrating numerically the system of equations up to steady state, for a few typical cases. In all numerical tests we take $P=1-\rho$, with $\vm=\rho_{\max}=1$. As initial macroscopic densities, we choose $\rho=0.3,0.6$ (plots to the left and right of the figure). We consider two values for the acceleration parameter, $\Dv=\vm/T$, $T=3,5$ (top and bottom of the figure). The number of velocities in the grid is taken as $N=rT+1$, with $r\in\left\{1,4,8\right\}$. The three curves in each plot contain the data for the cell averages of the equilibrium distribution for the different values of $r$. It is clear that in all cases, $f^{\infty}_N(v)$ is a function of the density $\rho$ and it approaches a series of delta functions, centered in the velocities which are multiples of $\Dv$ and indicated in the picture by red dots on the horizontal axis.

This means that, as $\dv\to 0$, only a finite number $T+1$ of velocities carry a non-zero mass of vehicles at equilibrium. More precisely, the discrete asymptotic function $f^{\infty}_N(v)$ is different from zero only in the $T+1$ cells $I_1,I_r,I_{2r},\dots,I_N$. Therefore, as time goes to infinity the number of nonzero values of the $f_j$'s appearing in \eqref{eq:f:discrete} is univocally determined by the acceleration term $\Dv=\vm/T$. 

\begin{figure}[t!]
\centering
\includegraphics[width=0.32\textwidth]{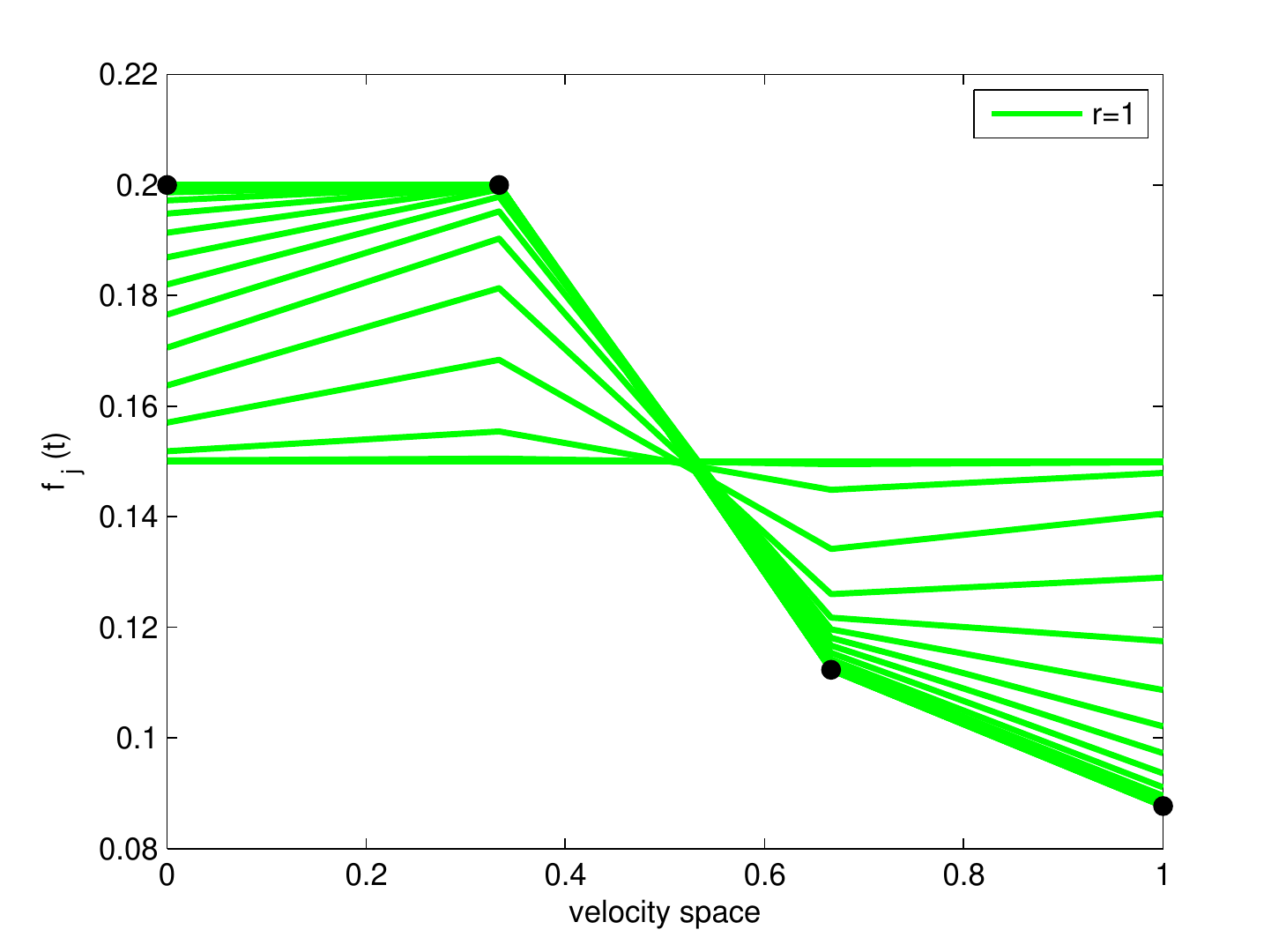}
\hfill
\includegraphics[width=0.32\textwidth]{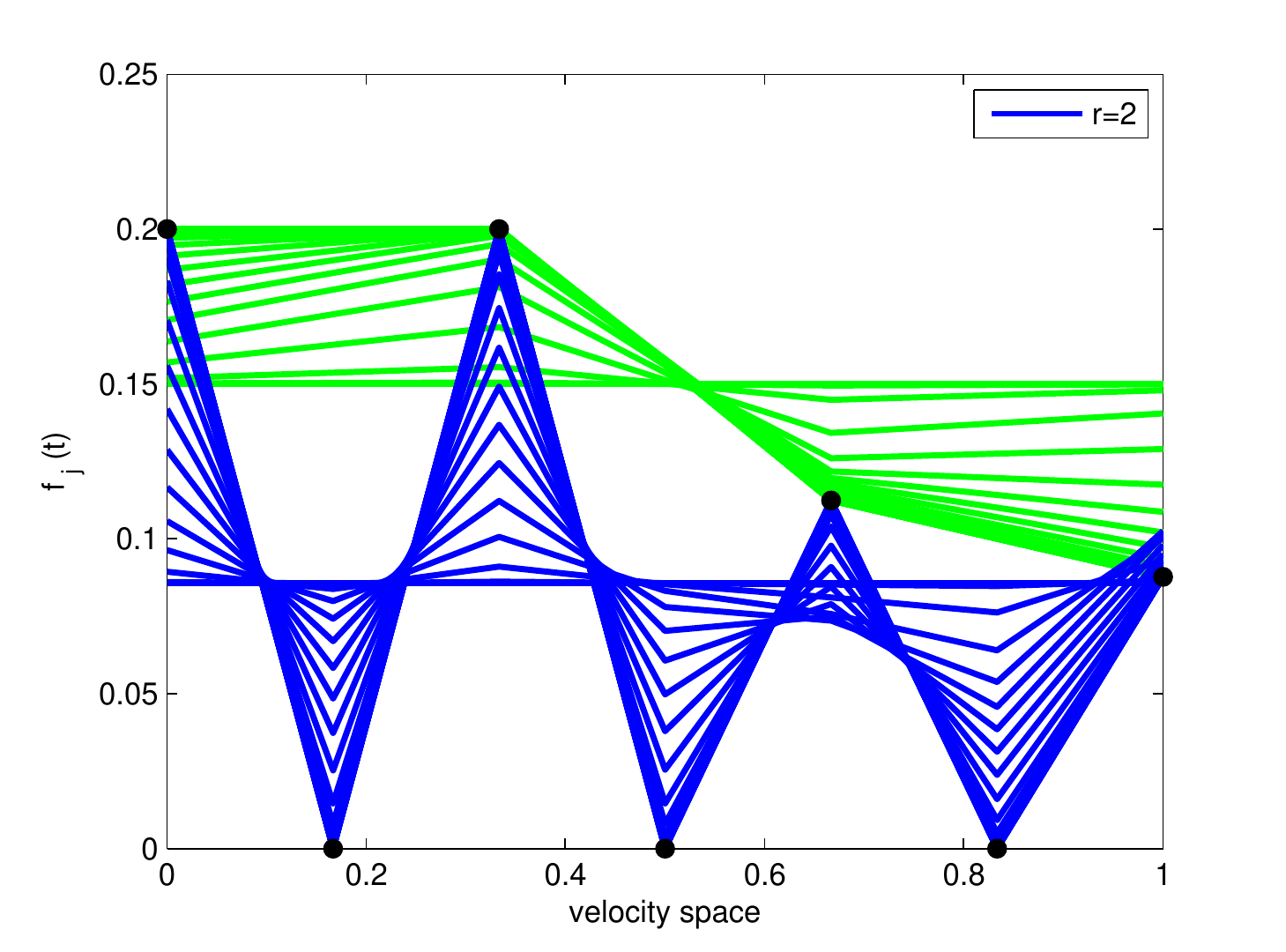}
\hfill
\includegraphics[width=0.32\textwidth]{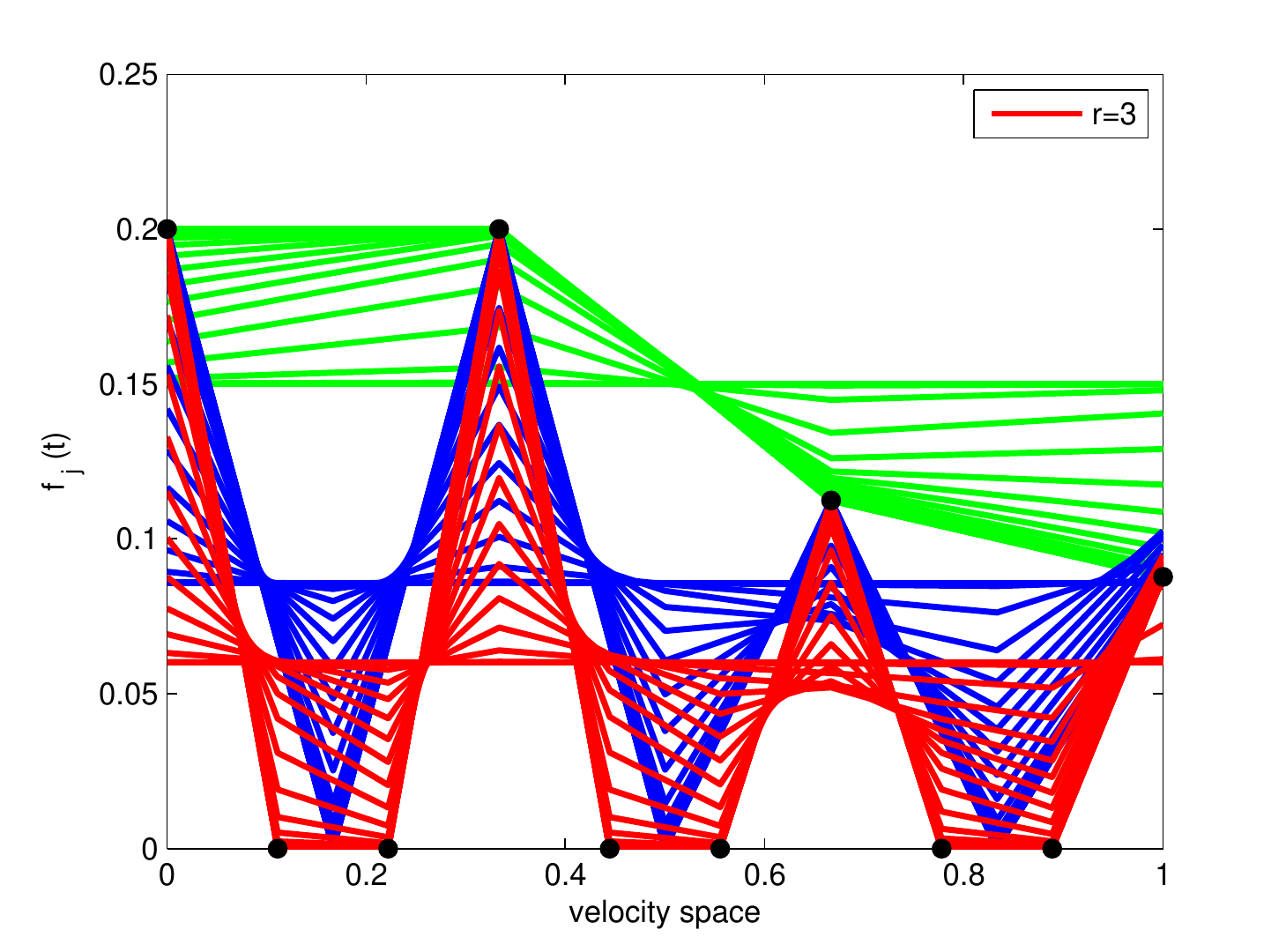}
\caption{Evolution towards equilibrium of the discretized model~\eqref{eq:delta_vectsys} with $N=4$ (green), $N=7$ (blue) and $N=10$ (red) grid points. The acceleration parameter $\Dv$ is taken as $\vm/3$ and the density is $\rho=0.6$. Black circles indicate the equilibrium values.\label{fig:new}}
\end{figure}

The previous considerations can be supported by the numerical results in Figure \ref{fig:new}, in which we show the evolution towards equilibrium of the $f_j$'s, $j=1,\dots,N$. In this figure, $\Dv=\vm/3$, and the different plots are obtained starting from a uniform initial distribution, namely $f_j(t=0)=\rho/N, \forall j=1,\dots,N$, with $r=1$ (green), $r=2$ (blue) and $r=3$ (red), which correspond to $N=4, 7$ and $10$ %velocity grid points
velocity cells respectively, and $\rho=0.6$. It is clear that under grid refinement the number of nonzero steady values does not change. In fact, note that a different dynamics towards equilibrium is observed, for different values of the number $N$ of %grid points,
cells, but as equilibrium is approached, the values of the $f_j$'s go to zero except for the velocities corresponding to integer multiples of $\Dv$. Moreover, the non zero values of the steady-state distribution $f^{\infty}$ do not depend on the discretization parameter $\dv$. This fact can be also deduced by looking at equations \eqref{eq:Jdelta:allj} of the discrete collision operator. These expressions are not functions of $\dv$. Thus all exact values of the equilibria can be obtained using a coarse grid.

Theorem \ref{th:delta_eq} below confirms the structure of the equilibria that we have just observed in the numerical results and it states that the steady-state solution of the $\delta$ model, prescribed by Theorem \ref{th:continuous-eq}, can be reconstructed numerically on the grid with $\dv=\Dv$. To this end,
%To study the structure of the solutions of system \eqref{eq:delta_vectsys},
we first recall a result from \cite{DelitalaTosin}, where the existence and well posedness of the solution of such systems is proved and then we show that the discrete equilibria of additional equations resulting from the choice $r>1$ give actually no contribution.

\begin{theorem}(Delitala-Tosin)\label{theo:DelitalaTosin}
Let $f_j(t=0)\geq0$, with $\sum_{j=1}^N f_j(t=0)=\rho$, be the initial condition for the system 
\[
	\frac{d}{dt}f_j=\f^{\tr} A^j \f - \f^{\tr} \mathbf{e}_j \mathbf{1}^{\tr}_N \f,\quad j=1,\dots,N,
\]
where the matrices $A^j$ are stochastic matrices with respect to the index $j$, i.e. $\sum_j A^j_{hk}=1$ for all $h,k$.
Then there exists $t_* = +\infty$ such that the system  admits a unique non-negative local solution $f \geq 0$ satisfying the a priori estimate
\[ ||f(t)||_1 = ||f_0||_1 =\rho \qquad \forall t \in (0, t_*=+\infty]. \]\end{theorem}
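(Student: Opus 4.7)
The plan is to combine a standard Cauchy-Lipschitz argument for local existence and uniqueness with two invariants (total mass and non-negativity of the components), which together will rule out finite-time blow-up and promote the local solution to a global one.

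First, I would observe that the right-hand side of the system is a (vector-valued) quadratic polynomial in $\f\in\mathbb{R}^N$, hence of class $C^\infty$, and in particular locally Lipschitz. By the classical Picard-Lindel\"of theorem, there exists $t_*>0$ and a unique solution $\f\in C^1([0,t_*);\mathbb{R}^N)$ to the initial value problem on the maximal interval of existence $[0,t_*)$.

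Next, I would establish the mass conservation identity by summing the $N$ scalar equations over $j$. Using the stochasticity hypothesis $\sum_{j=1}^N A^j_{hk}=1$ for all $h,k$, one gets $\sum_{j=1}^N A^j=\mathbf{1}_N\mathbf{1}_N^{\tr}$, hence
\[
\sum_{j=1}^N \f^{\tr} A^j \f \;=\; \f^{\tr}\mathbf{1}_N\mathbf{1}_N^{\tr}\f \;=\; \bigl(\mathbf{1}_N^{\tr}\f\bigr)^2,
\]
while $\sum_{j=1}^N \f^{\tr}\mathbf{e}_j\mathbf{1}_N^{\tr}\f=\f^{\tr}\mathbf{1}_N\mathbf{1}_N^{\tr}\f=(\mathbf{1}_N^{\tr}\f)^2$ as well. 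Therefore $\frac{d}{dt}\sum_j f_j\equiv 0$, i.e. $\|\f(t)\|_1=\|\f(0)\|_1=\rho$ for every $t\in[0,t_*)$, provided we can show that components stay non-negative so that the $\ell^1$ norm coincides with the sum.

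The step I expect to be the most delicate is non-negativity, since it is not automatic from the sign of the sum. The matrices $A^j$ have non-negative entries (being discretizations of a probability density $A\ge 0$), so whenever some component reaches the boundary $f_j=0$ while all other components remain non-negative, one has $\frac{d}{dt}f_j\big|_{f_j=0}=\f^{\tr} A^j \f \ge 0$. This sub-tangent condition is the standard criterion (Bony--Brezis, or a Nagumo-type argument) ensuring that the non-negative orthant $\{\f\ge0\}$ is positively invariant for the flow. A clean way to make this rigorous is to consider, for $\varepsilon>0$, the perturbed system with right-hand side $Q_j(\f)+\varepsilon$, show its solution stays strictly positive by a contradiction argument at the first hitting time of $\{f_j=0\}$, and then pass to the limit $\varepsilon\downarrow 0$ using uniqueness.

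Finally, combining the two invariants, $0\le f_j(t)\le\rho$ for every $j$ and every $t\in[0,t_*)$, so the solution remains in a bounded region where the vector field is smooth. Standard continuation of ODEs then forces $t_*=+\infty$, completing the proof.
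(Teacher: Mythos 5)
Your argument is correct, but note that the paper does not actually prove this statement: it is quoted verbatim from Delitala--Tosin \cite{DelitalaTosin} precisely so that it can be used as a black box, so there is no in-paper proof to compare against. Your proof is a sound, self-contained reconstruction along standard lines: Picard--Lindel\"of for local well-posedness, the column-stochasticity identity $\sum_j A^j=\mathbf{1}_N\mathbf{1}_N^{\tr}$ for conservation of $\sum_j f_j$, positive invariance of the orthant via the sub-tangent condition $\f^{\tr}A^j\f\ge 0$ on the face $\{f_j=0\}$, and continuation from the resulting a priori bound $0\le f_j\le\rho$. The one hypothesis you correctly flag as needed but not literally written in the statement is $A^j_{hk}\ge0$; this is implicit in ``stochastic matrices'' and holds for the discretizations in the paper, but it is worth stating explicitly since the non-negativity step fails without it. The original Delitala--Tosin route is slightly different in packaging: since $\sum_k f_k\equiv\rho$ on the existence interval, one rewrites the equation as $\frac{d}{dt}f_j+\rho f_j=\f^{\tr}A^j\f$ and runs a contraction-mapping argument on the mild formulation
\[
f_j(t)=e^{-\rho t}f_j(0)+\int_0^t e^{-\rho(t-s)}\bigl(\f^{\tr}A^j\f\bigr)(s)\,ds
\]
inside the cone of non-negative continuous functions, which yields existence, uniqueness and non-negativity in one stroke because the Picard iterates manifestly preserve the cone. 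Your decomposition into ``existence first, invariant region second'' buys nothing and loses nothing; it just requires the separate Nagumo/$\varepsilon$-perturbation step, and there what you actually invoke when letting $\varepsilon\downarrow0$ is continuous dependence on the right-hand side (guaranteed by local Lipschitz continuity) rather than uniqueness per se --- a cosmetic point, not a gap.
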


The following result, together with Remark \ref{rem:generic:dv}, shows that all equilibria of the discrete model are of the quantized form described by Theorem \ref{th:continuous-eq}. Thus the  next Theorem establishes the correspondences symbolized  by the right vertical and the middle horizontal arrows in Figure \ref{fig:diagcomm}.

\begin{theorem}\label{th:delta_eq}
Let $P$ be a given function of the density $\rho$. For any fixed $\Dv=\vm/T$, $T\in\mathbb{N}$, let $\f_{r}(\rho)$ denote the equilibrium distribution function of the ODE system~\eqref{eq:delta_vectsys}, obtained on the grid with spacing $\dv$ given by $\Dv=r\dv$ with $r=(N-1)/T \in \mathbb{N}$. Then
\begin{equation*}
	(\f_r)_j = \begin{cases}
	(\f_1)_{\pisup{\frac{j}{r}}} & \mbox{mod}(j-1,r)=0 \\ 0 & \mbox{otherwise}
	\end{cases}
\end{equation*}
is the unique stable equilibrium and the values of $\f_1$ depend uniquely on the initial density $\rho$, with $\sum_{k=1}^{T+1}(\f_1)_k=\rho$.
\end{theorem}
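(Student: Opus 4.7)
The plan is to reduce the discrete system~\eqref{eq:delta_vectsys} on the fine grid (parameter $r$) to the system on the coarsest grid (parameter $r=1$, i.e.\ $\dv=\Dv$), and then invoke Theorem~\ref{th:continuous-eq} together with Theorem~\ref{theo:DelitalaTosin}. First I would exploit the sparse structure of the matrices $A^j_\delta$ depicted in Figure~\ref{fig:matricidelta}: when $r\in\mathbb{N}$ an acceleration event moves mass from cell $I_h$ exactly into cell $I_{h+r}$ (and, at the upper boundary, into $I_N$), so the subspace
\[
\mathcal{S}=\bigl\{\f\in\mathbb{R}^N:\ f_j=0\text{ whenever }\mathrm{mod}(j-1,r)\neq 0\bigr\}
\]
is forward-invariant for the flow of~\eqref{eq:delta_vectsys}. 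Moreover, on $\mathcal{S}$ the relevant rows/columns of the $A^j_\delta$ that survive are precisely those indexed by $1,1+r,1+2r,\dots,1+Tr=N$, and a direct inspection of~\eqref{eq:Jdelta:allj} shows that the resulting sub-system coincides, up to re-indexing $k\mapsto \pisup{k/r}$, with the system~\eqref{eq:delta_vectsys} written on the coarse grid with $r=1$.

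Next I would establish the claimed equilibrium by showing that it is both (i) an equilibrium of~\eqref{eq:delta_vectsys} and (ii) the unique stable one. For (i), given $\f_1(\rho)$ the equilibrium with spacing $\dv=\Dv$ (whose existence, positivity and relation to $\rho$ follow from Theorem~\ref{th:continuous-eq} applied to the $T+1$ velocities $v_j=v_1+j\Dv$), one lifts it to $\mathcal{S}$ by the assignment in the statement; by the reduction above this lifted vector annihilates $Q_j$ for every index $j=1+kr$, while for indices $j$ with $\mathrm{mod}(j-1,r)\neq 0$ the loss term vanishes (since $f_j=0$) and the gain term vanishes as well, because both the ``false gain'' $\delta_{\vb}$ and the acceleration $\delta_{\vb+\Dv}$ contributions need $\vb$ or $\vb+\Dv$ to lie in $I_j$, which is impossible on $\mathcal{S}$. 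Thus the proposed vector satisfies $Q_j[\f,\f]=0$ for all $j$.

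For (ii), I would mimic the inductive bookkeeping of the proof of Theorem~\ref{th:continuous-eq} directly on the discrete equilibrium equations $Q_j[\f,\f]=0$. Writing these out in the order $j=1,2,\dots,N$ one obtains a quadratic equation for $f_j$ whose constant term involves only $f_{j-r}$ and the previously-determined components $f_1,\dots,f_{j-1}$. When $\mathrm{mod}(j-1,r)\neq 0$, by induction the contribution from $f_{j-r}$ is zero, so the constant term vanishes; the two roots are then $0$ and a candidate that the discrete analogue of Lemma~\ref{th:lemma} forces to be non-positive, leaving $f_j=0$ as the only admissible value. When $\mathrm{mod}(j-1,r)=0$ the equation is identical to the one for the $r=1$ system, hence its unique non-negative stable root coincides with $(\f_1)_{\pisup{j/r}}$. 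Combining this with Theorem~\ref{theo:DelitalaTosin}, which guarantees that the Cauchy problem admits a unique globally-defined non-negative solution preserving $\|\f\|_1=\rho$, yields uniqueness of the stable equilibrium among non-negative distributions.

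The step I expect to be the main obstacle is the inductive selection of the admissible root for $j$ with $\mathrm{mod}(j-1,r)\neq 0$: one must verify that the discrete quadratic coming from~\eqref{eq:Jdelta:allj} has exactly the same sign pattern as in the proof of Theorem~\ref{th:continuous-eq} so that Lemma~\ref{th:lemma} (or a direct analogue) applies. This requires carefully tracking the partial sums $\sum_{l<j}f_l$ through the recursion, including the boundary cases $j=\pisup{\rp}$ and $j=N$ where the matrix structure in Figure~\ref{fig:matricidelta} differs from the generic one; once that sign analysis is in place, the rest of the argument is a routine assembly of the ingredients above.
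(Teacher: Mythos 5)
Your proposal is correct and follows essentially the same route as the paper: the core of the argument is the iterative solution, in increasing $j$, of the quadratic equilibrium equations coming from \eqref{eq:Jdelta:allj}, noting that the constant term $P\rho f_{j-r}$ vanishes unless $\mathrm{mod}(j-1,r)=0$ and using the sign analysis of Lemma~\ref{th:lemma} to discard the non-admissible root, exactly as in the paper's proof. The preliminary observation about the forward-invariance of the subspace supported on indices $j\equiv 1\ (\mathrm{mod}\ r)$ is a nice complement but is not needed once the explicit recursion is carried out.
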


\begin{proof}
We already know from Theorem \ref{theo:DelitalaTosin} that the solution of \eqref{eq:delta_vectsys} exists, is non-negative and is uniquely determined by the initial condition. 

To prove the statement, we compute explicitly the equilibrium solutions of \eqref{eq:delta_vectsys}, using the explicit expression of the collision kernel given in \eqref{eq:Jsmallj}, \eqref{eq:Jrp1}, \eqref{eq:Jallj} and \eqref{eq:Jlastj}, with $r\in\mathbb{N}$ and $N=rT+1$. 
Since here we are interested in the solutions of the homogeneous problem,
 we will take identical distributions for the candidate and the field vehicles, i.e.
 $f_j=f^j$.
 
For $j=1$, using the expression \eqref{eq:Jsmallj} and the fact that $\sum_{k=1}^N f_k=\rho$ we obtain
\begin{equation*} \label{eq:eq:feq1}
	\frac{d}{dt}f_1=0 \quad \Leftrightarrow \quad -(1-P)f_1^2  +\left(1-2P\right)\rho f_1=0.
\end{equation*}
This is a quadratic equation for $f_1$, which has the two roots $f_1=0$ and $f_1=\rho(1-2P)/(1-P)$. It is easy to see that one solution is stable, and the other one unstable, depending on the value of $P$. Here we are interested only in the stable root, so we find
\begin{equation}\label{eq:feq1}
(\f_r)_1=\begin{cases}
0 & P \geq \tfrac12 \\ \rho \frac{1-2P}{1-P}&\mbox{otherwise}
\end{cases}
\end{equation}
Thus, no vehicle is in the lowest speed cell $I_1$ if $P\geq \tfrac12$, which, for the simple case $P=1-\rho$ means that all cars are moving if $\rho\leq \tfrac12$. 

The case $j=1$ we just computed is typical. Also for larger values of $j$, we find a quadratic equation for the unknown $f_j$, which involves only previously computed values of $f_k, k<j$. Thus, we can easily compute iteratively all components of $\f_r$.

For $2\leq j\leq r$, the equilibrium equation is obtained by using again the expressions \eqref{eq:Jsmallj} with $r\in\mathbb{N}$. Then
\[
-(1-P)f_j^2 +\left[ (1-2P)\rho-2(1-P)\sum_{k=1}^{j-1} f_k\right] f_j=0.
\]
Start from $j=2$. Clearly, for $P\geq\tfrac12$, substituting equation \eqref{eq:feq1}, we again have $(\f_r)_2=0$. For $P<\tfrac12$, the equation for $f_2$, with $f_1$ given by \eqref{eq:feq1}, becomes
\[
-(1-P)f_2^2 -\left(1-2P\right)\rho f_2=0.
\]
Comparing with the equation for $f_1$, we see that now the stable root is $f_2=0$. Thus, at equilibrium, we have $(\f_r)_2=0$, for all values of $P$. Analogously, it is easy to see that $(\f_r)_j=0$, $\forall j=3,\dots,r$.

For $r+1\leq j < N$, in place of \eqref{eq:Jsmallj}, we use \eqref{eq:Jrp1} and \eqref{eq:Jallj} in order to obtain the equilibrium equation. Then
\begin{equation}\label{eq:eq:feqj}
	-(1-P)f_j^2+\left[(1-2P)\rho-2(1-P)\sum_{k=1}^{j-1} f_k\right] f_j + P \rho f_{j-r} = 0.
\end{equation}
The equation has a positive discriminant
\begin{equation*}
	\mathcal{D}_j=\left[(1-2P)\rho-2(1-P)\sum_{k=1}^{j-1}f_k\right]^2+4P(1-P)\rho f_{j-r}
\end{equation*}
thus it always admits two real roots. To fix ideas, let us consider $r+1 \leq j \leq 2r$. If $j=r+1$, since $(\f_r)_k = 0$, $\forall k=2,\dots,r$, equation \eqref{eq:eq:feqj} becomes
\begin{equation*}
	-(1-P) f_{r+1}^2 + \left[(1-2P)\rho-2(1-P)f_1\right] f_{r+1} + P \rho f_1 = 0.
\end{equation*}
Thus, we find $(\f_r)_{r+1}=0$ for $P\geq 1/2$, because the equation for $f_{r+1}$ becomes identical to \eqref{eq:eq:feq1}. If instead $P<1/2$, substituting the expression for $f_1$, the equation for $f_{r+1}$ becomes
\begin{equation*}
	-(1-P)f_{r+1}^2-(1-2P)\rho f_{r+1} + \rho^2 \frac{P(1-2P)}{1-P} = 0.
\end{equation*}
This equation has a negative and a positive real root, which is stable. Thus
\begin{equation*}
	(\f_r)_{r+1}=\begin{cases}
	0 & P \geq \tfrac12 \\ \frac{-(1-2P)\rho+\rho\sqrt{(1-4P^2)}}{2(1-P)} &\mbox{otherwise}.
	\end{cases}
\end{equation*}
Now, let $r+1 < j \leq 2r$. Since $f_{j-r}= 0$, the constant term of equation \eqref{eq:eq:feqj} is zero. Then, as seen for $j=2,\dots,r$, it is easy to prove that, for each $j=r+2,\dots,2r$ and for all values of $P$, one solution is negative and thus $(\f_r)_j = 0$.

Clearly, this procedure can be repeated and we find that
\begin{equation*}
	(\f_r)_j=\begin{cases}
	0 & P \geq \tfrac12 \\ \frac{-2(1-P)\sum_{k=1}^{j-1} f_k+(1-2P)\rho+\sqrt{\mathcal{D}_j}}{2(1-P)} &\mbox{otherwise}
	\end{cases}
\end{equation*}
if $j=lr+1$, $l=0,\dots,T-1$, while $(\f_r)_j = 0$ otherwise. From these considerations, the thesis easily follows. Finally, just note that for the last value, $f_N$, we can use mass conservation
\[
	(\f_r)_{rT+1}
	=(\f_r)_N
	=\rho-\sum_{l=0}^{T-1} (\f_r)_{lr+1}(\rho).
	\qedhere
\]
\end{proof}

Notice that the result above proves that equilibria are determined by the initial density $\rho$, which is constant in time in the spatially homogeneous case, but they do not depend on the number of cells $N$ used to approximate the kinetic distribution. In fact, the stable equilibria just computed correspond exactly to the values $f_j^\infty$ given in Theorem \ref{th:continuous-eq} and they can all be recovered on the coarse grid $\dv=\Dv$, i.e. choosing $r=1$.

\begin{figure}
	\centering
	\includegraphics[width=0.49\textwidth]{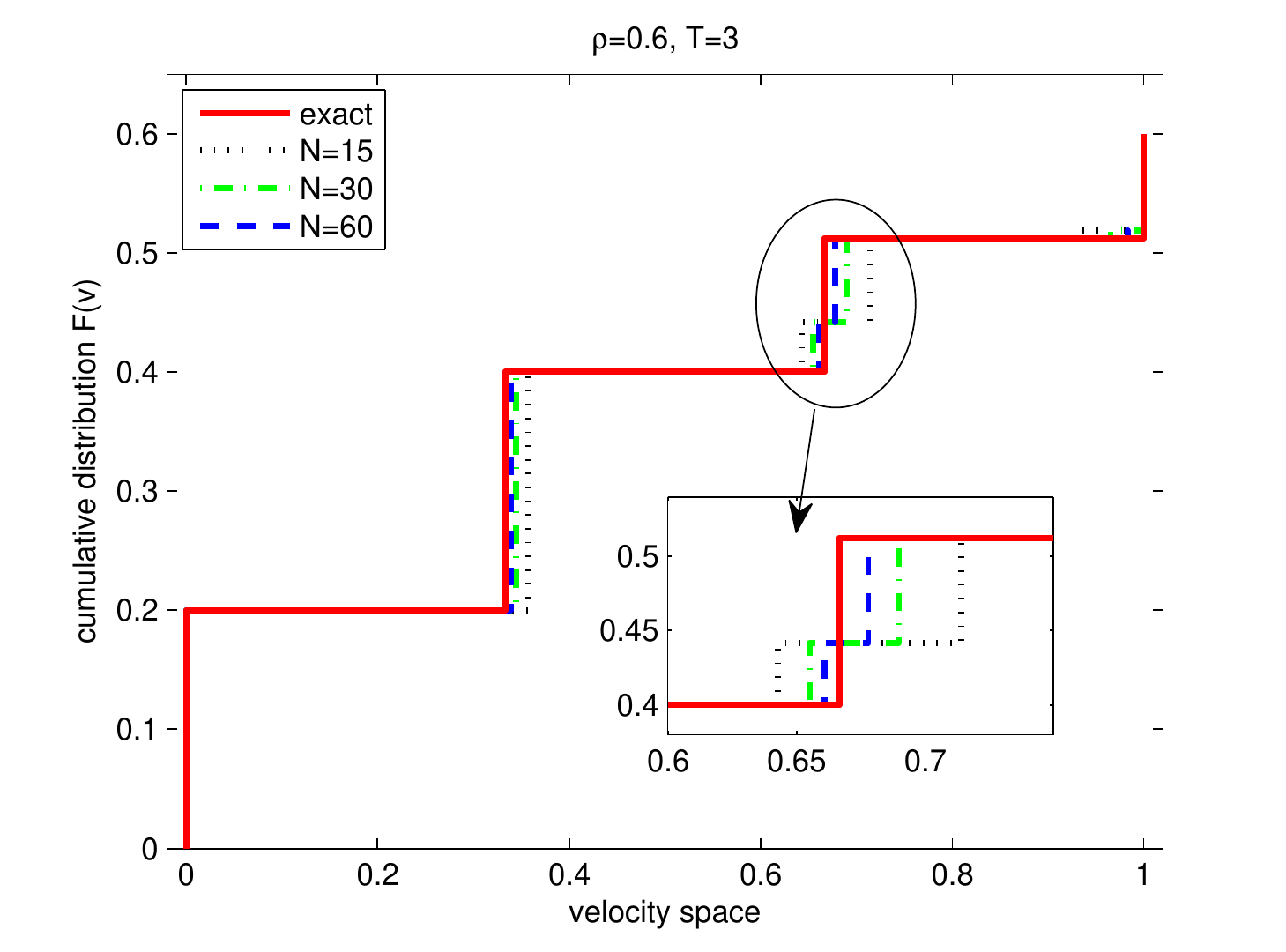}
	\includegraphics[width=0.49\textwidth]{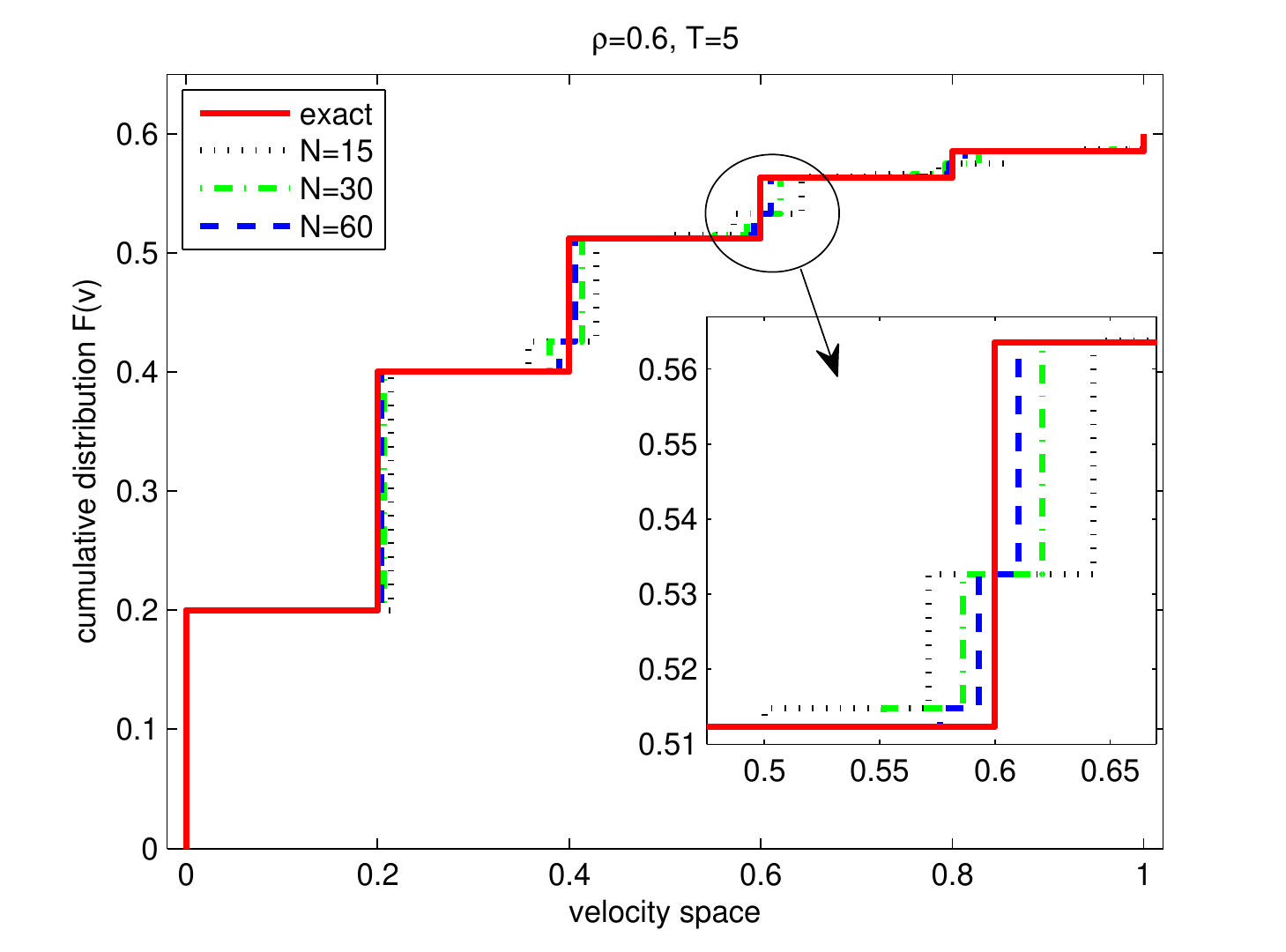}
	\caption{Cumulative density at equilibrium for several values of $\dv\to 0$. The density is $\rho=0.6$ and $\Dv$ is chosen as $1/3$ (left), $1/5$ (right).\label{fig:generic-dv}}
\end{figure}

\begin{remark}[The case of a generic $\dv$] \label{rem:generic:dv}
In order to further investigate the existence of stable equilibria, we can seek more general ones numerically. In fact, the finite volume discretization \eqref{eq:f:discrete} is capable of converging to absolutely continuous equilibria, but so far we proved that it converges to sums of Dirac masses if $\dv$ is an integer submultiple of $\Dv$. Here we apply our discretization scheme with non-integer ratios $r=\Dv/\dv$ and show that the resulting equilibria converge, in the sense of distributions, to the equilibria already described in Theorems \ref{th:continuous-eq} and \ref{th:delta_eq}.

When $r=\Dv/\dv$ is not integer, Theorem \ref{th:delta_eq} cannot be applied, but numerical integration of equation \eqref{eq:delta_vectsys} shows that, for large time, the solution approaches equilibria that have masses concentrated at points spaced by $\Dv$. More precisely, in this more general case, only a small finite number of $f^\infty_j$'s is nonzero and the cumulative distribution function induced by the discrete $f_N^\infty(v)$, cf. \eqref{eq:f:discrete}, 
\[
	F_N(v)=\int_0^v f_N^\infty(v) \mathrm{d}v, \quad v\in[0,\vm],
\]
approximates the cumulative distribution of a sum of Dirac masses centered at multiples of $\Dv$. That is, $F_N(v)$ converges to a  piecewise constant function with jump discontinuities at multiples of $\Dv$. See Figure \ref{fig:generic-dv}.  

Furthermore, Figure \ref{fig:generic-dv} shows that the jump discontinuities of the cumulative distribution in the limit $\dv\to0$ are located exactly in the points computed analytically by Theorem \ref{th:delta_eq} in the case $r\in\mathbb{N}$.
In particular, the figure shows the cumulative distribution at equilibrium computed by solving numerically \eqref{eq:delta_vectsys} for several values of the discretization parameter $\dv$ with non-integer ratio $\Dv/\dv$. In both panels the density is $\rho=0.6$, while $\Dv=1/3$ in the left plot and $\Dv=1/5$ in the right one. The continuous red curve represents the cumulative distribution of the stationary solution for $\dv=\Dv$, computed by Theorem \ref{th:delta_eq}. The black dotted curve is computed with $N=15$ velocity cells, the green dot-dashed line with $N=30$ and finally the blue dashed one with $N=60$. We observe that as $\dv\to 0$ the cumulative distribution tends to that of the analytic solution of Theorem \ref{th:continuous-eq}, where only the velocities centered in multiples of $\Dv$ give rise to a jump discontinuity. 
\end{remark}

We conclude the section with a few remarks on the structure of the equilibria of \eqref{eq:delta_vectsys}.

\begin{remark}[Reduced velocity space]\label{rem:reduced_space}
The $\delta$ model is characterized by a small number of non-trivial values for the microscopic velocities, which allows one to compute analytically the equilibrium of the system. This fact provides an analytic closure of the macroscopic equations. This fact can also be exploited from a numerical point of view, justifying the use of a small number of discretization points in simulations aimed at capturing equilibrium effects. In this sense, the model seems to suggest that  kinetic corrections can be accounted for with a computational cost which is not much higher than the one needed for a macroscopic model.
\end{remark}

\begin{figure}[t!]
\centering
\includegraphics[width=0.32\textwidth]{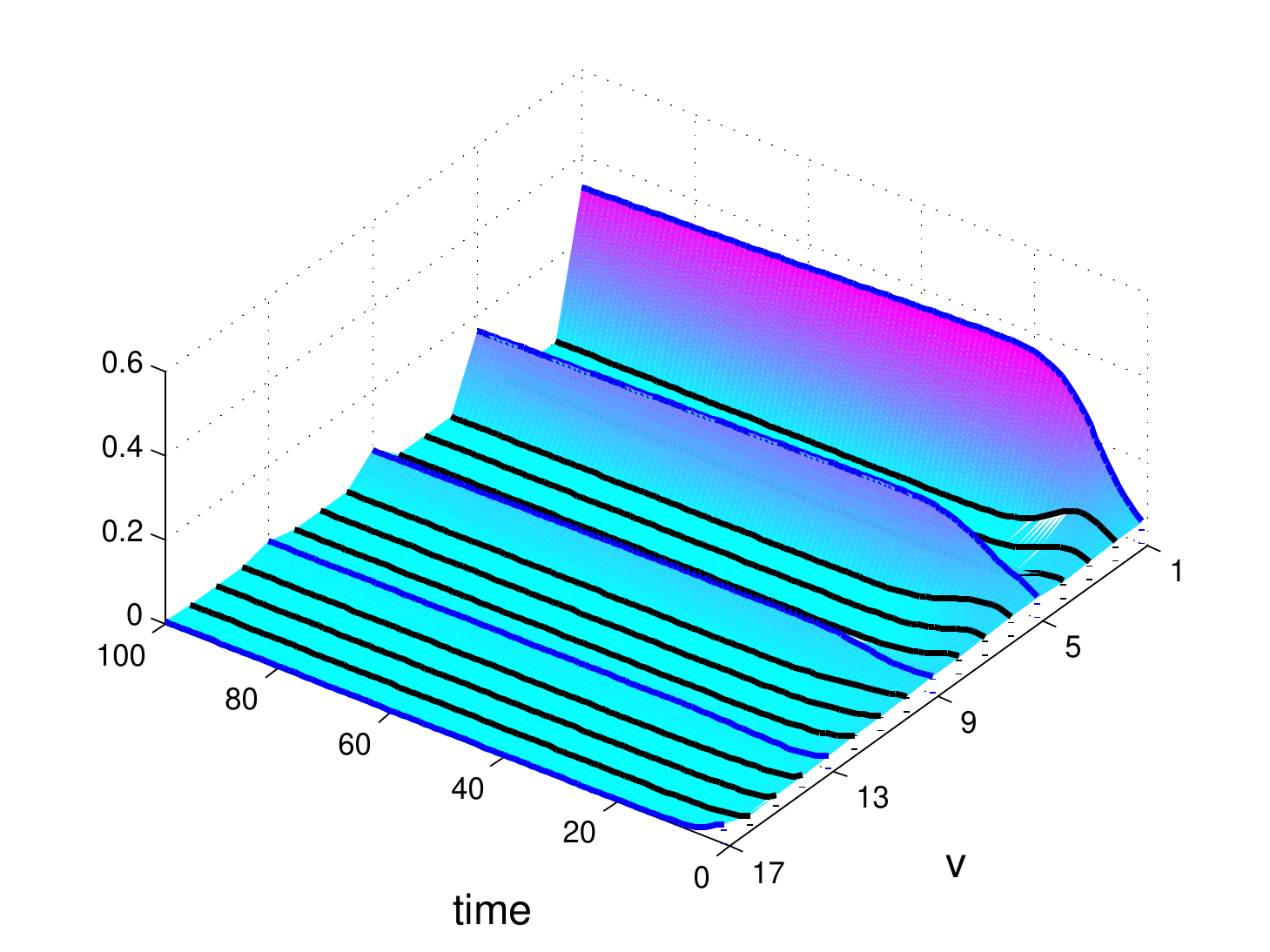}
\hfill
\includegraphics[width=0.32\textwidth]{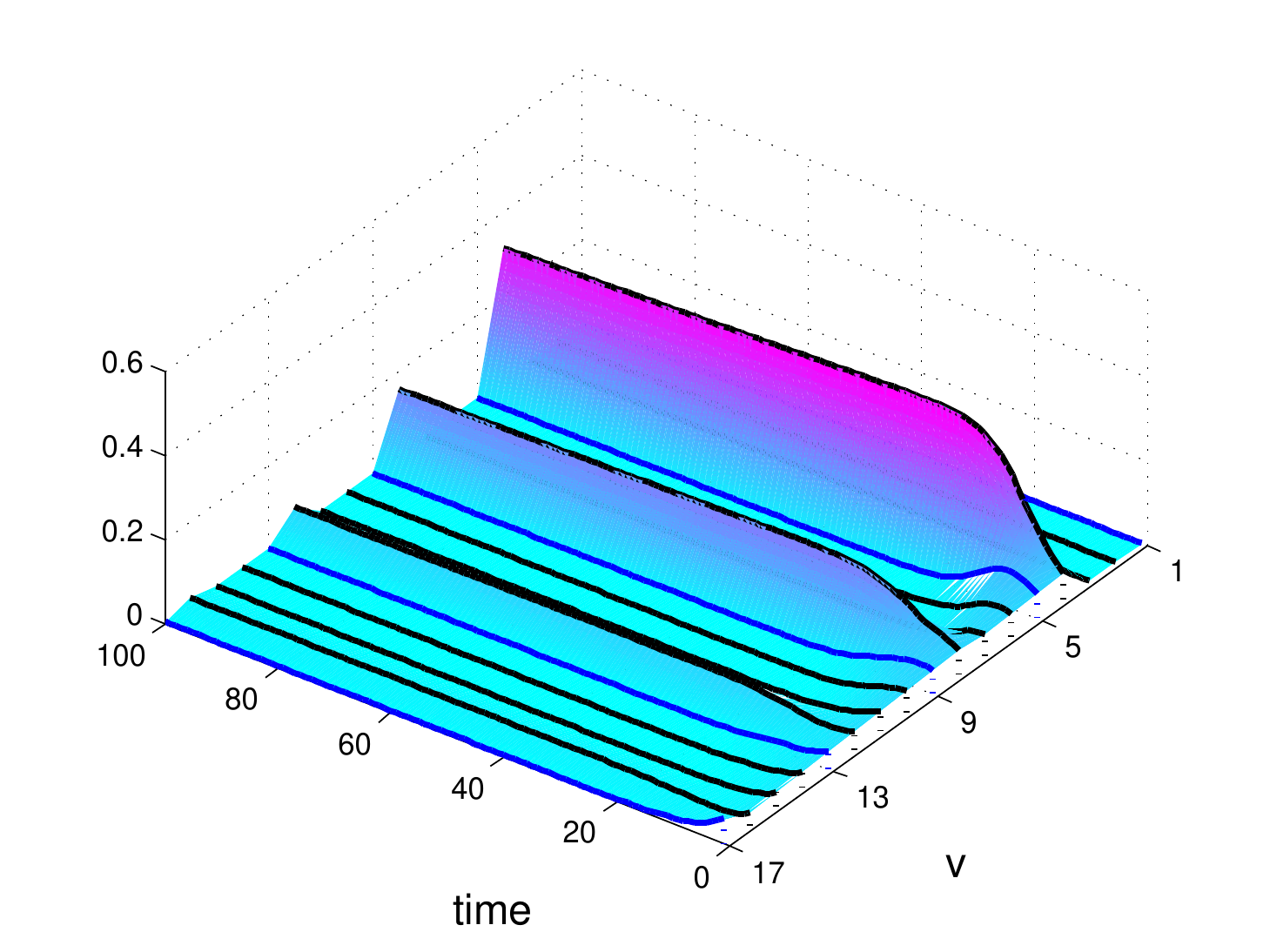}
\hfill
\includegraphics[width=0.32\textwidth]{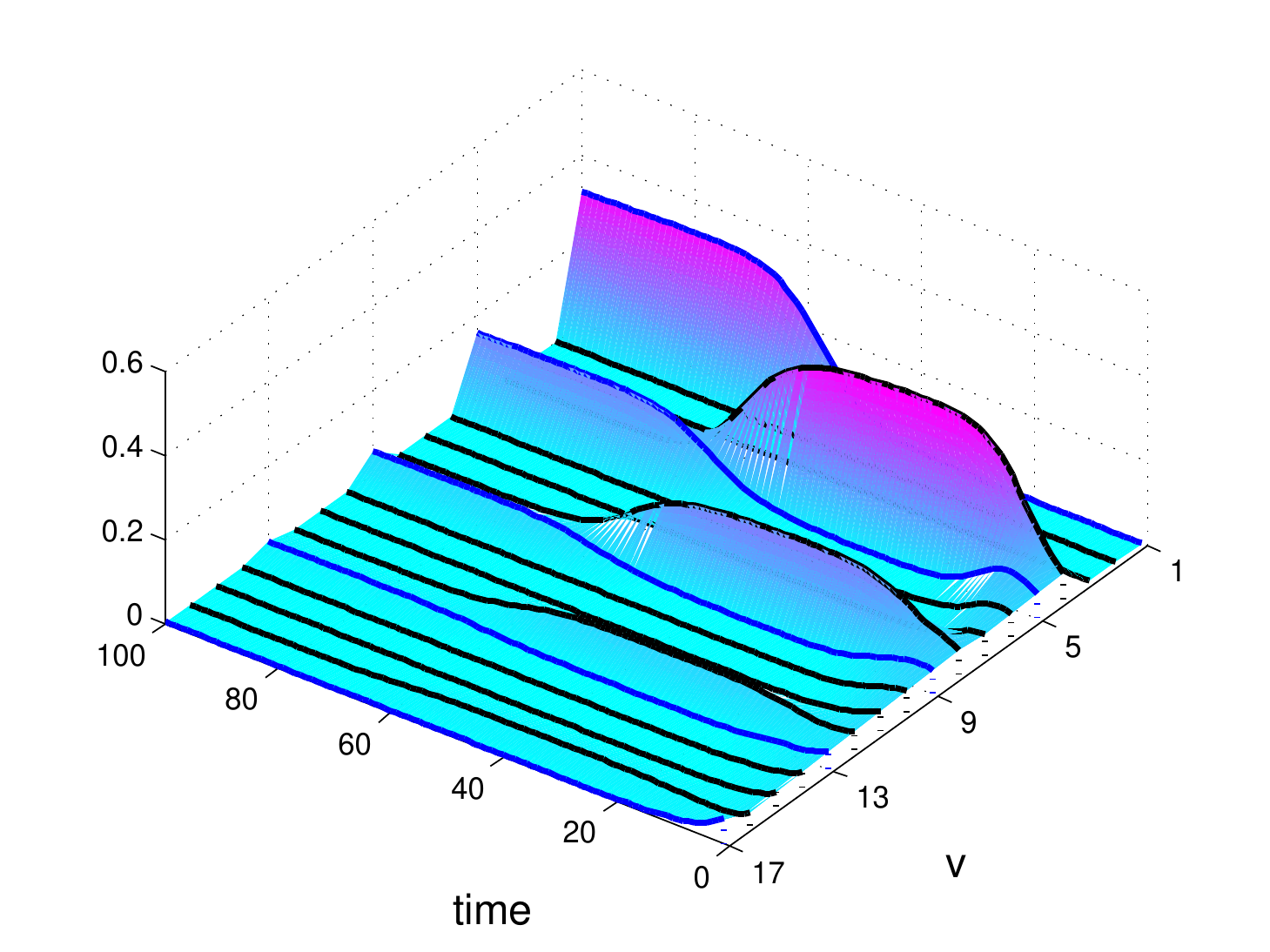}
\caption{Evolution towards equilibrium, $\rho=0.7$, $T=4$, $N=17$. Left: $f_j(t=0)\equiv \rho/N$. Middle: $f_j(t=0)=0, j=1,2,3$, $f_j(t=0)\equiv(\rho/(N-3)), j>3$. Right: $f_1=\epsilon=10^{-6}, f_2=f_3=0$ and $f_j(t=0)\equiv((\rho-\epsilon)/(N-3))$. The thick lines highlight the components $f_j$ and the blue ones are for those that appear in stable equilibria, i.e. with $j=kr+1$ for $k=0,\ldots,T$.
\label{fig:StableEq}}
\end{figure}

\begin{remark}[Unstable equilibria]\label{rem:unstable_eq}
Theorem~\ref{th:delta_eq} gives the uniqueness of the stable equilibrium of the model with $\Dv/\dv=r\in\mathbb{N}$. Unstable ones may occur if the initial condition is such that $f_1(0)=0$. In fact, the interaction rules related to the case $\vb>\va$ do not generate a post-interaction velocity $v$ which is less than $\va$. Thus if $f_1(0)=0$, i.e. if there are no vehicles with velocity $v_1=0$ at the initial time, there will not be interactions 
leading to an increase of $f_1$.
%between candidate vehicles with velocity $\vb$ and field vehicles with velocity $\va=v_1$, which would be the only way to fill up $f_1$. 
This consideration can be generalized: if $f_j(0)=0$ for $j=1,\dots,\bar{j}<r$, then the computed equilibria will be $f_j=(\mathbf{f}_r)_{j-\overline{j}}$, where $\mathbf{f}_r$ is the vector containing the stable equilibria.
%of Theorem~\ref{th:delta_eq}. 
In this sense, the equilibrium solution of the $\delta$ model does not only depend on $\rho$ but also on the initial condition $f(0,v)$. These solutions however are unstable: a small perturbation on $f_1(t=0)$ is enough to trigger the evolution towards the stable equilibrium, which depends only on $\rho$.

This is illustrated in Figure \ref{fig:StableEq}: in the left panel we show the evolution towards equilibrium when $f_j(t=0)\neq0$ for all classes (this is the stable equilibrium), while in the middle we show the case when $f_1=f_2=f_3=0$. In the rightmost panel we show a perturbation of the previous case, where $f_1$ takes a very small but nonzero value. It is clear that the evolution goes at first towards the unstable equilibrium of the middle panel, but then, in the long run, the stable equilibrium of Theorem \ref{th:delta_eq} emerges.
\end{remark}

\begin{figure}[t!]
	\centering
	\includegraphics[width=0.49\textwidth]{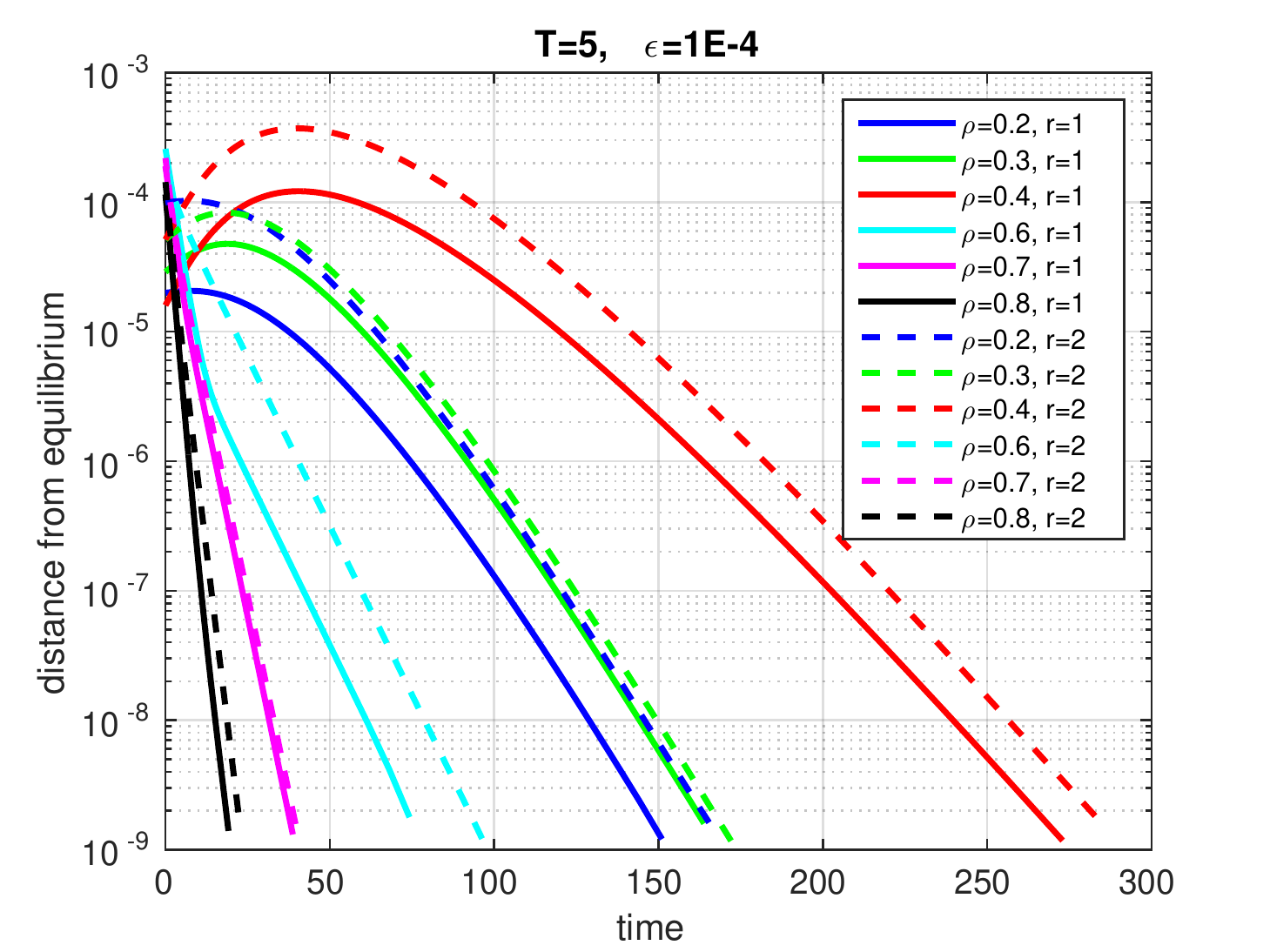}
	\hfill
	\includegraphics[width=0.49\textwidth]{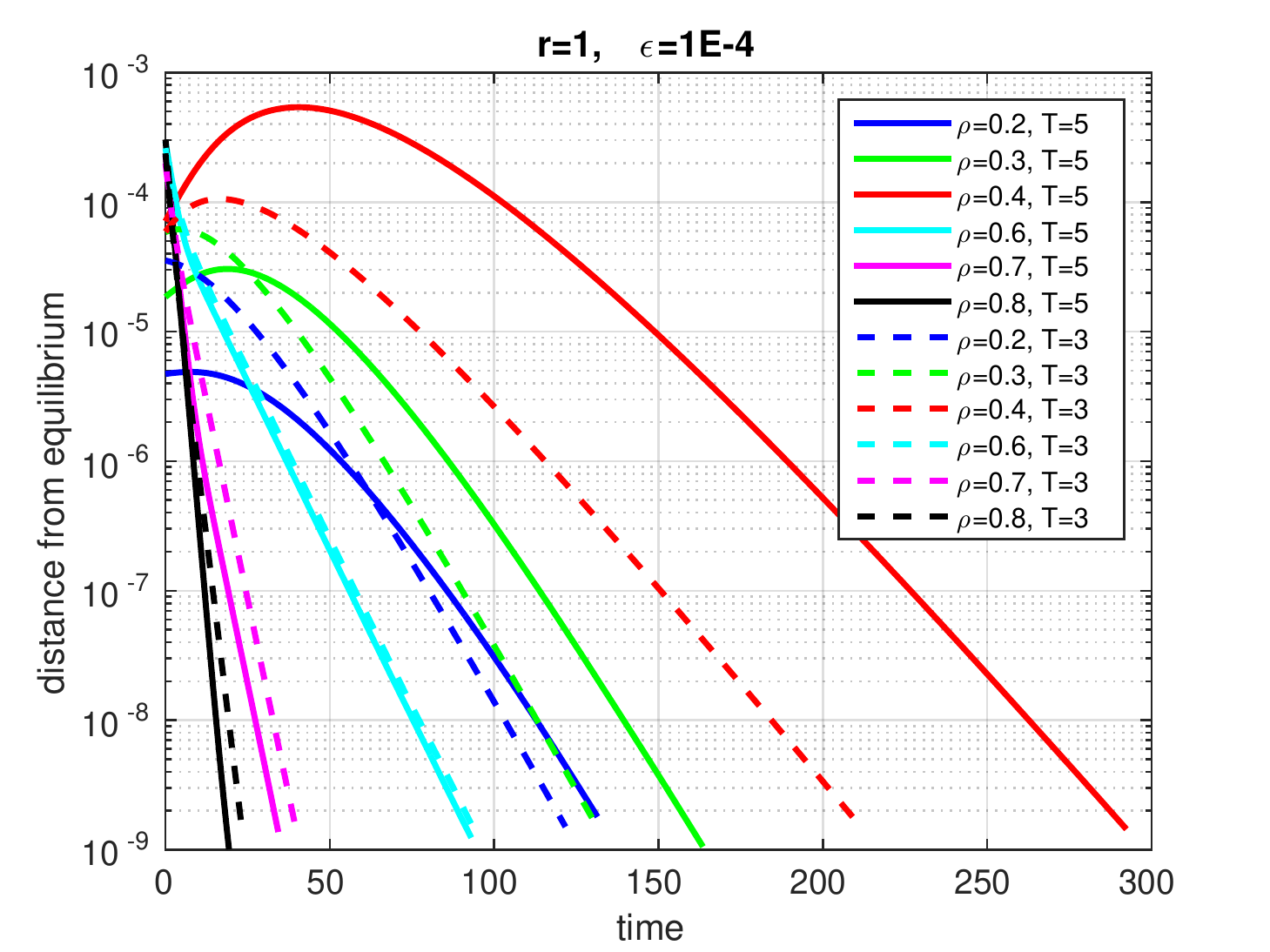}
	\caption{Speed of convergence towards the stable equilibria of the $\delta$ model. The initial condition is a small random perturbation of the steady-states.\label{fig:EnergyPlot}}
\end{figure}

\begin{remark}[Convergence rate to equilibrium]
	In Figure \ref{fig:EnergyPlot} we study the rate of convergence towards the stable steady-states. For the set of densities $\rho\in\{0.2,0.3,0.4,0.6,0.7,0.8\}$ we integrate numerically the system \eqref{eq:delta_vectsys} for large times starting from a small random perturbation of the stable equilibrium. In both panels, we use a linear scale for the $x$-axis (time) and a logarithmic scale for the $y$-axis (error). The error is computed at each step as $e(t)=\norm{f(t)-f^\infty}_2$. The figure suggests that the rate of convergence towards the stable equilibrium depends on the density. In fact, in the left panel we consider different values of the ratio $\Dv/\dv=r$, in particular $r=1$ (solid lines) and $r=2$ (dashed lines), and we note that the slopes of the curves corresponding to the same value of $\rho$ are the same. A similar behavior is observed in the right plot in which two different $\Dv$ are taken, $\Dv=1/5$ (solid lines) and $\Dv=1/3$ (dashed lines). We can conjecture that for large enough times the distance from equilibrium behaves as $e(t) \simeq C e^{-M(\rho)t} \norm{f(t=0)-f^\infty}_2$, where $C=C(r,\Dv)$, $M(\rho)>0$.
\end{remark}

\section{The $\chi$ velocity model}
\label{sect:chi}

The structure of the steady-state distribution of the $\delta$ model clearly depends on the particular choice of the acceleration interaction made in \eqref{eq:Adelta}, in which a vehicle accelerates by jumping from its pre-interaction velocity $\vb$ to the new velocity $\vb+\Dv$. Thus it could seem quite natural that only velocities $0,\,\Dv,\,2\Dv,\dots,\vm$ give a non zero contribution at equilibrium. 

Here we study the $\chi$ model, already introduced in Section~\ref{sec:modeling} (see equation \eqref{eq:Achi}), in which vehicles can assume a post-interaction velocity uniformly distributed over a range of speeds when the acceleration interaction occurs. We will show that, although this model is more refined than the $\delta$ model, at equilibrium the essential information is already caught by the simpler $\delta$ model.

Using the formulation~\eqref{eq:Achi} for the transition probability density $A$, we rewrite the gain term in~\eqref{eq:model2} as
\begin{equation*}
	G[f,f](t,v)=\eta \int_V \int_V \left[(1-P)\delta_{\min\{\vb,\va\}}(v) + P\frac{\chi_{\left[\vb,\min\left\{\vb+\Dv,\vm\right\}\right]}(v)}{\min\left\{\vb+\Dv,\vm\right\}-\vb}\right]\fb\fa \dvub \dvua.
\end{equation*}
Notice that the $\chi$ function can be split as
\[
	\frac{\chi_{\left[\vb,\min\left\{\vb+\Dv,\vm\right\}\right]}(v)}{\min\left\{\vb+\Dv,\vm\right\}-\vb}=
	\begin{cases}
		\frac{\chi_{\left[\vb,\vb+\Dv\right]}(v)}{\Dv},&\text{if\; $\vb\in[0, \vm-\Dv]$}\\
		\frac{\chi_{\left[\vb,\vm\right]}(v)}{\vm-\vb},&\text{if\; $\vb\in(\vm-\Dv, \vm]$}
	\end{cases}
\]
hence substituting in the above equation and evaluating explicitly the integrals, we find
\begin{equation}\label{eq:gainschi}
\begin{aligned}
	G[f,f](t,v)=&\eta(1-P) f(t,v) \left[\int_v^{\vm}\fa \dvua + \int_v^{\vm}\fb \dvub\right]\\
	& + \eta P \rho \left[\frac{1}{\Dv}\int_0^{\vm-\Dv} \chi_{[\vb,\vb+\Dv]}(v)\fb\dvub+\int_{\vm-\Dv}^{\vm} \frac{\chi_{[\vb,\vm]}(v)}{\vm-\vb}\fb\dvub\right].
\end{aligned}
\end{equation}
Observe that \eqref{eq:gainschi} differs from \eqref{eq:delta_model} only in the terms proportional to $P$.

\subsection{Discretization of the model}
\label{sec:discretechi}

To compute the steady-state solution of the $\chi$ model, we need to integrate the equations numerically. We use the same discretization of the velocity space $V$ introduced to discretize the $\delta$ model and therefore the kinetic distribution is approximated as in %Section~\ref{sec:discretedelta}.
\eqref{eq:f:discrete}.

Integrating the kinetic equation~\eqref{eq:model1} over each cell, we find the system of ODEs~\eqref{eq:discretesys},
but now the gain term is given by \eqref{eq:gainschi}. Although in this case the integrals are laborious, they can be computed recalling that
$\Dv=\vm/T$ with $T\in\mathbb{N}$ and assuming that $\dv$ is an integer submultiple of $\Dv$. Thus we will take $N-1\equiv0 \;(\mathtt{mod}\, T)$ and $r=\frac{N-1}{T}$.

The details are reported in Appendix \ref{app:eq:matricichi}. Here we just point out that, as in the case of the $\delta$ model, the ODE system can be conveniently rewritten in vector form as
\begin{equation}
	\frac{d}{dt}f_j=\eta \left[\f^{\tr} A^j_{\chi} \f - \f^{\tr} \mathbf{e}_j \mathbf{1}^{\tr}_N \f\right],\quad j=1,\dots,N
    \label{eq:chi_vectsys}
\end{equation}
where $\f=[f_1,\dots,f_N]^{\tr}\in\mathbb{R}^N$ is the vector of the unknown functions, $\mathbf{e}_j\in\mathbb{R}^N$ denotes the vector with a $1$ in the $j$-th coordinate and $0$'s elsewhere, $\mathbf{1}^{\tr}_N=[1,\dots,1]\in\mathbb{R}^N$ and $A^j_{\chi}$ is the $j$-th interaction matrix such that $\left(A_{\chi}^j\right)_{hk}$ contains the probabilities that a candidate vehicle with velocity 
%$v_h$ 
in $I_h$ 
interacting with a field vehicle with velocity 
%$v_k$ 
in $I_k$ 
acquires 
%the velocity $v_j$.
a velocity in $I_j$.

\begin{figure}
	\centering
	\begin{tikzpicture}[y={(0,-1cm)},scale=1]
	%Matrice per 0<=j<r
	\begin{scope}
	
	%Correzione del modello chi
	\filldraw[\colR] (0,0) rectangle (3.1,0.5);
	\skeleton
	%Matrice delta
	\node[anchor=center] at (1.5,3.6) {\large$A^j_{\chi}$ for $1\leq j\leq r$};
	\filldraw[pattern=\patUnoMenoP] (0.4,0.4) rectangle (3.1,0.5);
	\filldraw[pattern=\patUnoMenoP] (0.4,0.5) rectangle (0.5,3.1);
	\draw[help lines,->,yshift=-0.45cm]
	(-0.3,0) node[left]{$j$} -- (-0.1,0);
	\draw[help lines,->,xshift=0.45cm]
	(0,-0.3) node[above]{$j$} -- (0,-0.1);
	%\draw[help lines,->] (1.6,1.2) node[below,black]{\small$1\!-\!P$} -- (1.25,0.5);
	\end{scope}
	\end{tikzpicture}
	\begin{tikzpicture}[y={(0,-1cm)},scale=1]
	%Matrice per r<j<N
	\begin{scope}[xshift=4cm]
	
	%Correzione del modello chi
	\filldraw[\colR] (0,0.6) rectangle (3.1,1.7);
	\skeleton
	%Matrici delta a cui togliamo la riga in j-r
	\node[anchor=center] at (1.5,3.6) {\large$A^j_{\chi}$ for $r< j< N$};
	%\filldraw[pattern=\patP] (0,0.6) rectangle (3.1,0.7);
	\filldraw[pattern=\patUnoMenoP] (1.6,1.6) rectangle (3.1,1.7);
	\filldraw[pattern=\patUnoMenoP] (1.6,1.7) rectangle (1.7,3.1);
	\draw[help lines,->,yshift=-1.65cm]
	(-0.3,0) node[left]{$j$} -- (-0.1,0);
	\draw[help lines,->,yshift=-0.65cm]
	(-0.3,0) node[left]{$j-r$} -- (-0.1,0);
	\draw[help lines,->,xshift=1.65cm]
	(0,-0.3) node[above]{$j$} -- (0,-0.1);
	%\draw[help lines,->] (1.6,1.2) node[below,black]{\small$P$} -- (1.25,0.75);
	\end{scope}
	\end{tikzpicture}
	\begin{tikzpicture}[y={(0,-1cm)},scale=1]
	%Matrice per j=N
	\begin{scope}[xshift=8cm]
	
	%Correzione del modello chi
	\filldraw[\colR] (0,2.0) rectangle (3.1,3);
	\skeleton
	%Matrice delta a cui togliamo le righe N-r,...,N-1
	\node[anchor=center] at (1.5,3.6) {\large$A^j_{\chi}$ for $j=N$};
	%\filldraw[pattern=\patP] (0,2) rectangle (3.1,3);
	\filldraw[pattern=\patP] (0,3) rectangle (3,3.1);
	\fill[pattern=\patUno] (3,3) rectangle (3.1,3.1);
	\draw[help lines,->,yshift=-2.05cm]
	(-0.3,0) node[left]{$N-r$} -- (-0.1,0);
	\draw[help lines,->,yshift=-3.05cm]
	(-0.3,0) node[left]{$N$} -- (-0.1,0);
	\draw[help lines,->,xshift=3.05cm]
	(0,-0.3) node[above]{$N$} -- (0,-0.1);
	\end{scope}
	\end{tikzpicture}
	\caption{Structure of the probability matrices of the $\chi$ model with $\dv$ integer sub-multiple of $\Dv$.\label{fig:chimatrices}}
\end{figure}

Since the matrices appearing in \eqref{eq:chi_vectsys} are again stochastic, we can apply Theorem \ref{theo:DelitalaTosin} to guarantee the well posedness of the associated Cauchy problem.

In Figure~\ref{fig:chimatrices} we show the structure of the $\chi$ matrices: they are less sparse than the $\delta$ matrices because of the uniformly distributed acceleration in $\left[\vb,\vb+\Dv\right]$, where $\vb$ is the pre-interaction speed. In fact the matrix $A^j_{\chi}$ contains non-zero elements also in the rows from the $(j-r+1)$-th to the $(j-1)$-th, see the shaded areas in Figure~\ref{fig:chimatrices}, which represent non-zero probabilities of accelerating to 
%the speed $v_j$ 
a speed in $I_j$. Since $\Dv=r\dv$, exactly $r$ rows fill up. Instead, the area drawn using hatching contains the same probabilities already shown in Figure \ref{fig:matricidelta} for the case of the $\delta$ model, with $r\in\mathbb{N}$. Note that the elements of the matrices depend on $\dv$ (see equations \eqref{eq:elementichi}).
Thus, in contrast to the $\delta$ model, steady solutions of the ODE system~\eqref{eq:chi_vectsys} depend on the number of velocity cells $N$ chosen to approximate the kinetic distribution (see \eqref{eq:f:discrete}). In other words, although $\dv$ is an integer sub-multiple of $\Dv$, this model does not converge, as time goes to infinity, to the asymptotic distribution on the coarse grid as in the $\delta$ model.  However, we do recover the asymptotic distribution on the coarse grid in the limit $\dv\to 0$.

Notice from equations \eqref{eq:elementichi} that all the elements in the rows $j-r,\dots,j-1$ of the matrices $A^j_{\chi}$, $j=1,\dots,N$, tend to $0$ as $1/r$ when the grid is refined.
In particular, for $j=1,\dots,r$, $A^j_{\chi}\rightarrow A^j_{\delta}$.
This consideration is not true for the matrices $A_{\chi}^j$, for $j=r+1,\dots,N$.

\subsection{Expected speed of the $\delta$ and the $\chi$ model}

Despite their differences the $\chi$ and the $\delta$ model are deeply related.
This can be seen by computing the expected output speed in each model resulting from a fixed pre-interaction speed. 
We define the expected value $\langle v\rangle$ of the post-interaction velocity as
\begin{equation}\label{eq:expected}
	\langle v\rangle=\int_{0}^{\vm} v\Avvvr \, \dvu, \quad \left(\vb,\va\right) \in V\times V.
\end{equation}
For brevity we indicate with $A_{\delta}(v)$ and $A_{\chi}(v)$ the probability densities given in~\eqref{eq:Adelta} and~\eqref{eq:Achi} respectively. Again we assume that $P_1=P_2=P$ as function of the density $\rho$. For the $\delta$ model we obtain
\begin{align}\label{eq:expected_delta}
	\langle v\rangle_{\delta}&=\int_0^{\vm} v \left[ \left(1-P\right) \delta_{\min\{\vb,\va\}}(v) + P\, \delta_{\min\left\{\vb+\Dv_{\delta},\vm\right\}} (v) \right]\,\dvu
\\ \nonumber
	&=\left(1-P\right)\, \min\{\vb,\va\} 
	+ P\, 
	\begin{cases} \vb+\Dv_\delta, & \text{if}\;\; \vb+\Dv_\delta\leq\vm 
	\\ \vb+\left(\vm-\vb\right), & \text{if}\;\; \vb+\Dv_\delta>\vm    
	\end{cases}
\end{align}
In contrast, if we consider the $\chi$ model we have
\begin{align*}
\langle v\rangle_{\chi}&=\int_0^{\vm} 
v \left[ \left(1-P\right) \delta_{\min\{\vb,\va\}}(v) + P\, \frac{ \chi_{ \left[ \vb,\min\left\{\vb+\Dv_{\chi},\vm\right\} \right] } (v) }{ \min\left\{\vb+\Dv_{\chi},\vm\right\}-\vb} \right]\, \dvu
\\ 
	&= \left(1-P\right)\,\min\{\vb,\va\} + P\, \begin{cases} \ds{\frac{1}{\Dv_{\chi}} \int_{\vb}^{\vb+\Dv_{\chi}} v\, \dvu}, & \text{if}\;\; \vb+\Dv_{\chi}\leq\vm \vspace{0.3cm}\\ \ds{\frac{1}{\vm-\vb}\int_{\vb}^{\vm} v\,\dvu}, & \text{if}\;\; \vb+\Dv_{\chi}>\vm \end{cases}
\end{align*}
and thus
\begin{equation}\label{eq:expected_chi}
  \langle v\rangle_{\chi}
  =\left(1-P\right)\,\min\{\vb,\va\} 
  + P\, 
  \begin{cases} 
  \ds{\vb+\frac{\Dv_\chi}{2}}, & \text{if}\;\; \vb+\Dv_\chi\leq\vm   \vspace{0.3cm} 
  \\ 
  \ds{\vb+\frac12\left(\vm-\vb\right)}, & \text{if}\;\; \vb+\Dv_\chi>\vm 
  \end{cases}
\end{equation}
By comparing the last lines of~\eqref{eq:expected_delta} and~\eqref{eq:expected_chi}, it is clear that 
\begin{equation}\label{eq:Deltav}
\langle v\rangle_{\chi}=\langle v\rangle_{\delta} \qquad \forall \, \vb\leq\vm-\Dv_\chi, \qquad \mbox{provided} \; \Dv_\delta=\tfrac12\Dv_\chi.
\end{equation}

\begin{remark}[Uniformly distributed deceleration]
	The computation of the expected speed shows a link between the $\delta$ model and the $\chi$ model. In fact, under the constraint $\Dv_{\delta}=\frac12\Dv_\chi$, the two models provide the same  expected speed. Similarly, if we consider a braking scenario in which the candidate vehicle brakes to a speed uniformly distributed in an interval centered on $\va$, then the expected speed results to be again $(1-P)\va$.
\end{remark}

\begin{remark}
Let us  compare the $A^j_\chi$ matrices (Figure \ref{fig:chimatrices}) for $r<j\leq N-r$ with a given $\Dv$ and the corresponding $A^j_\delta$ matrices (Figure \ref{fig:matricidelta}) with $\tfrac{\Dv}{2}$. For the case $r\in\mathbb{N}$, the isolated nonzero row of $A^j_\delta$ is at $j-\tfrac{r}{2}$, which corresponds to the middle of the green shaded area in $A^j_\chi$.
Moreover, for any fixed $\Dv$, it can be proved that, for $\dv\to 0$, the sum of the quantities located  
in the shaded area of $A_\chi^j$
is equal to the total contribution provided by the $\left(j-\frac{r}{2}\right)$-th row of the $\delta$ matrices obtained with the acceleration parameter $\frac{\Dv}{2}$. In other words, as $\dv\to 0$, the total effect of the $\left(j-\frac{r}{2}\right)$-th row of $A_{\delta}^j$ with $\frac{\Dv}{2}$ is spread over $r+1$ rows in the matrices $A_{\chi}^j$ with $\Dv$, which are the rows shaded in green in Figure~\ref{fig:chimatrices}.
\end{remark}

\section{Macroscopic properties}
\label{sect:fundamental}

\paragraph{Macroscopic acceleration}
In order to explain the relation of $\Dv$ with the acceleration of the vehicles in the model, we compute the rate of change of the macroscopic velocity:
\begin{align*}
\frac{\partial u}{\partial t}
&=
\frac{\partial}{\partial t}
\left[
\frac1\rho
\int_V vf(t,v)\,\dvu
\right]
=
\frac1\rho
\int_V
v Q[f,f](t,v)\, \dvu
\\
&=
\frac\eta\rho
\left[
\int_V v \,\dvu
\int_V\int_V
\Avvvr \fb\fa \,\dvub \dvua
-\rho \int_V vf(t,v) \,\dvu
\right]
\\
&=
\frac\eta\rho
\left[
\int_V\int_V
\langle v\rangle \fb\fa \,\dvub \dvua
-\rho \int_V vf(t,v) \,\dvu
\right]
\end{align*}
where $\langle v\rangle$ is a function of $\vb$ and $\va$ as defined in \eqref{eq:expected}.

In the case of the $\delta$ model, $\langle v\rangle_\delta$ is given by \eqref{eq:expected_delta} %for $(\vb,\va)\in\Omega_1$, while $<v>_\delta=(1-P)\va+P\vb$ for $(\vb,\va)\in\Omega_2$
and thus
%\begin{align*}
%\frac{\partial u}{\partial t}
%=
%\frac\eta\rho
%\bigg[
%&(1-P)\int_0^{\vm} \int_0^{\vb} \va \fb\fa \, \dvua \dvub
%+
%P\int_0^{\vm} \int_0^{\vb} \vb \fb\fa \, \dvua \dvub
%+
%\\
%&(1-P)\int_0^{\vm} \int_{\vb}^{\vm} \vb\fb\fa \,\dvua \dvub
%+
%P\int_0^{\vm-\Dv} \int_{\vb}^{\vm} (\vb+\Dv)\fb\fa \,\dvua \dvub
%+
%\\
%&P\int_{\vm-\Dv}^{\vm} \int_{\vb}^{\vm} \vm\fb\fa \,\dvua \dvub
%-
%\rho \int_0^{\vm}vf(t,v)\,\dvu
%\bigg].
%\end{align*}
\begin{align*}
\frac{\partial u}{\partial t}
=
\frac\eta\rho
\bigg[
&(1-P)\int_0^{\vm} \int_0^{\vb} \va \fb\fa \, \dvua \dvub
+ (1-P)\int_0^{\vm} \int_{\vb}^{\vm} \vb\fb\fa \,\dvua \dvub
\\
&+P\rho\int_0^{\vm-\Dv} (\vb+\Dv_{\delta})\fb \,\dvub
+P\rho\int_{\vm-\Dv}^{\vm} \vm\fb \, \dvub
-
\rho \int_0^{\vm}vf(t,v)\,\dvu
\bigg].
\end{align*}
Given an initial distribution $f(t=0,v)$, the equation above yields the evolution of the macroscopic acceleration in time. It is easy to study analytically this quantity at the initial time. In particular, we compute the initial acceleration in the case in which all vehicles are still but the density is below the value for which $P=1/2$ (that is $\rho=1/2$ when taking $P=1-\rho$).
%critical value (defined in Remark \ref{remark:critical}).
By considering an initial distribution with all vehicles in the lowest velocity class, i.e. of the form 
$f(0,v)=\tfrac{2\rho}{\dv}\chi_{I_1}(v)$, we have 
\begin{equation*}
\left. \frac{\partial u}{\partial t} \right|_{t=0}
= 
\eta P\Dv_\delta\int_0^{\vm-\Dv_{\delta}} \fb \, \dvub +\mathcal{O}(\dv)
= \eta \rho P \Dv_\delta +\mathcal{O}(\dv).
\end{equation*}

The above equation shows that the acceleration of the vehicles in the $\delta$ model depends linearly on $\Dv$.
Analogously, for the $\chi$ model, using \eqref{eq:expected_chi}, one obtains
\begin{equation*}
\left. \frac{\partial u}{\partial t} \right|_{t=0}
= \frac12 \eta \rho P \Dv_\chi + \mathcal{O}(\dv)
\end{equation*}
which reinforces the remark made in \eqref{eq:Deltav} about the similarities of the $\chi$ and the $\delta$ model when $\Dv_\chi=2\Dv_\delta$. 

\begin{remark}[Acceleration]
Recall that $(\eta\rho)^{-1}$ is a time. Thus $\eta\rho \Dv$ is the built-in acceleration of the model, which, not surprisingly, is linked to $\Dv$. We can use dimensional arguments to estimate the order of magnitude of $\Dv$. According to Lebacque \cite{Lebacque03}, the maximum acceleration of cars is approximately $a_{LB}=2.5$ m/s$^2$. The maximum speed is approximately $\vm\simeq 28$m/s, and we expect the maximum acceleration when $P=1$. Thus $\eta\rho \Dv \simeq a_{LB}$.
\end{remark}

The estimates above provide the trend of the macroscopic acceleration starting from rest. For the general case, we now study the evolution of the macroscopic velocity $u$ in time, up to steady state. These data are shown in Fig. \ref{fig:accel} and Fig. \ref{fig:decel}, for various combinations of the model parameters. The results shown are obtained integrating the equations for the $\delta$ and the $\chi$ model found in \eqref{eq:delta_vectsys} and  \eqref{eq:chi_vectsys}, respectively, with $r\in\mathbb{N}$, and computing at each time $u(t)=\tfrac{1}{\rho}\int_V vf(t,v)\; \dvu$.

\begin{figure}
\includegraphics[width=0.49\textwidth]{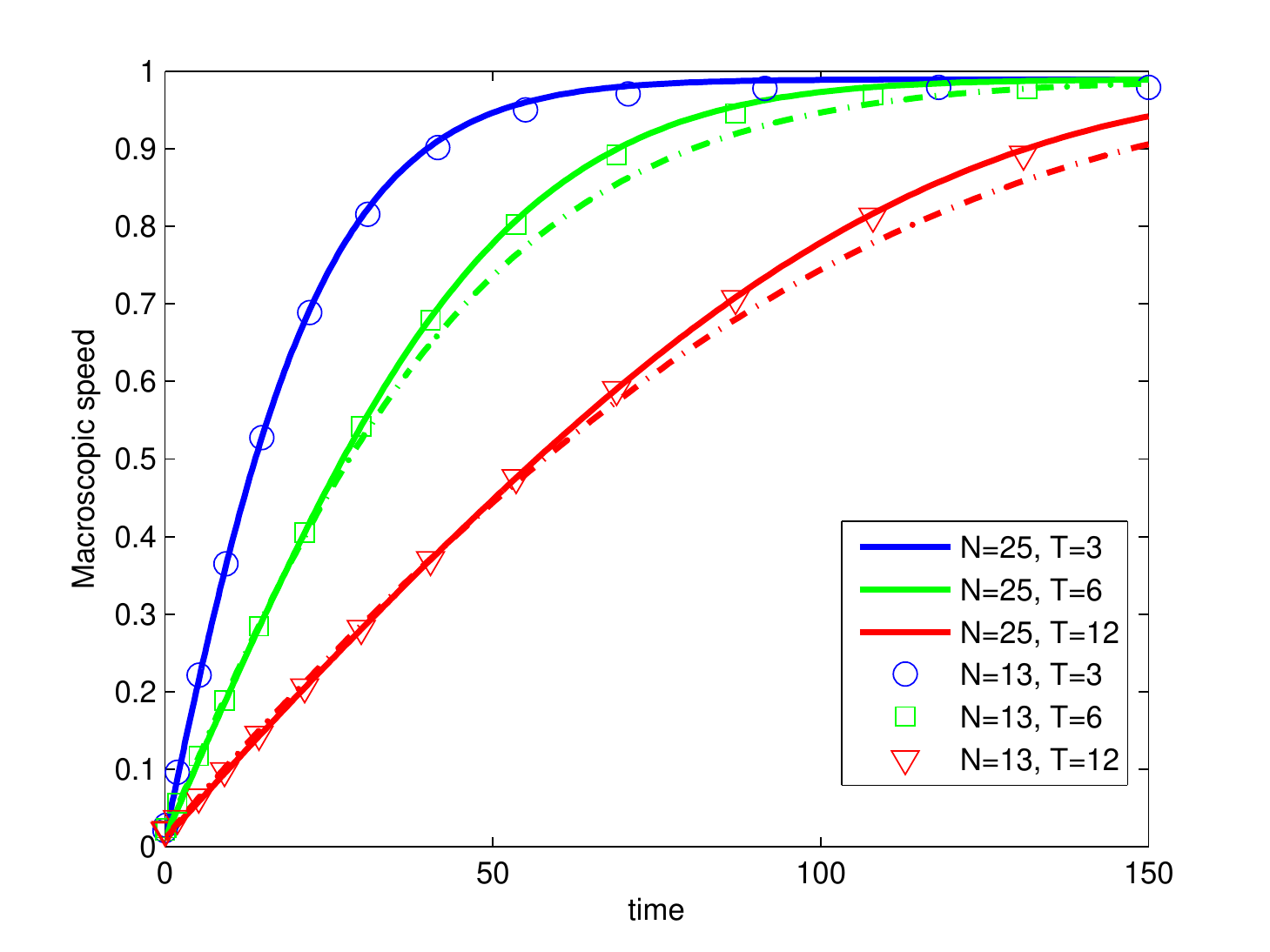}
\hfill
\includegraphics[width=0.49\textwidth]{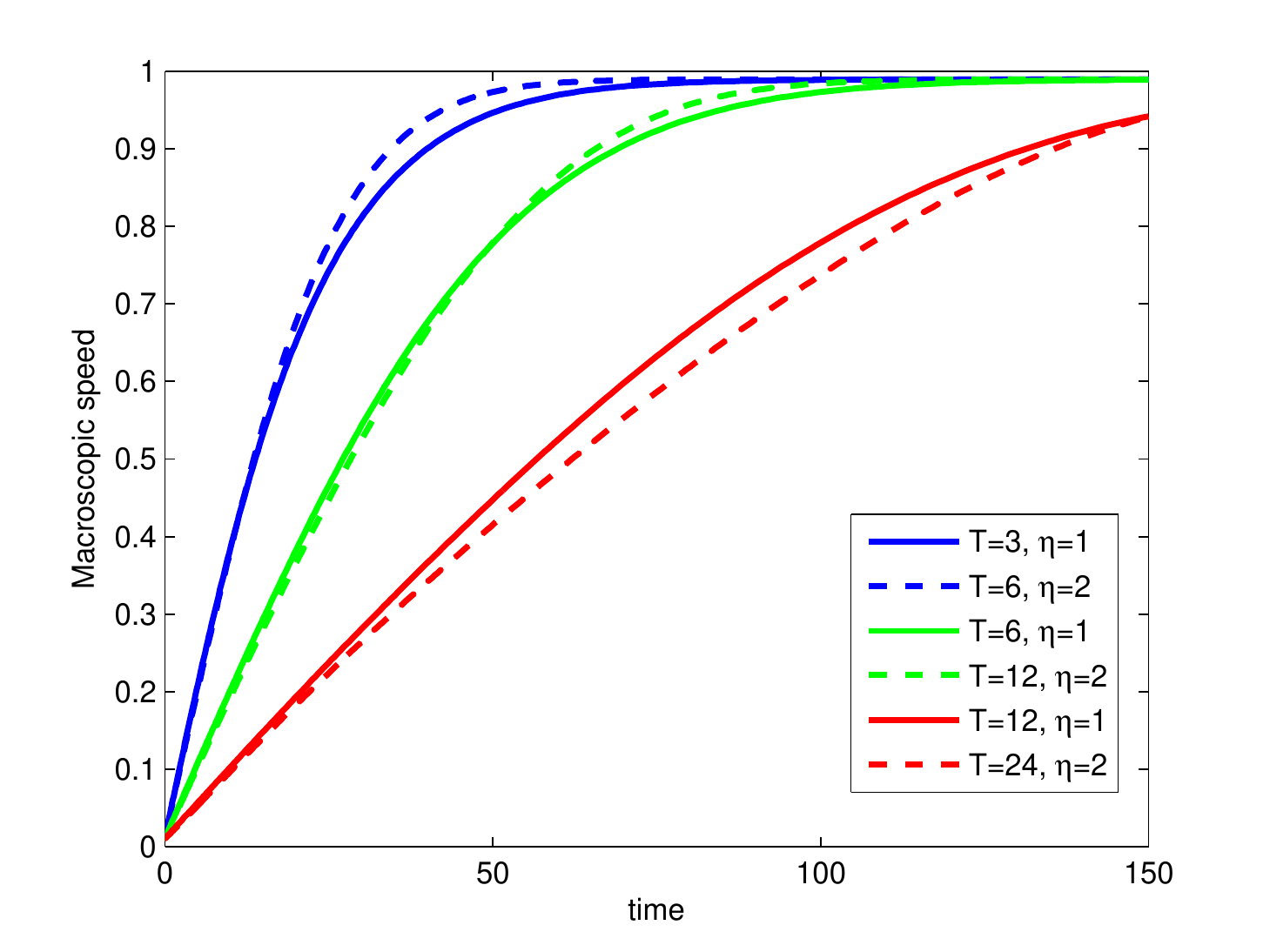}
\caption{Evolution of the macroscopic velocity in time. Left: comparison of different values of $T$ and $\dv$. The dot-dashed lines without markers correspond to the $\chi$ model. Right: relaxation to steady state for different combinations of $\eta$ and $T$.}
\label{fig:accel}
\end{figure}

Figure \ref{fig:accel} shows a typical case in which we expect {\em acceleration}. The density is $\rho=0.15$, well below the value corresponding to $P=1/2$ when $P=1-\rho$, and we start with an initial distribution in which $f_1(t=0)=\rho$, while $f_j(t=0)=0$, for all $j\geq 2$. Thus initially all vehicles are still, and, since the density is low, they will accelerate to reach the maximum speed. The duration of the transient depends on the product $\eta\Dv=\eta/T$, for a fixed density, as is apparent in the right panel of the figure, because the acceleration, i.e. the slope of the curves, is proportional to $\eta \Dv$. The left panel shows the effect of the grid discretization, i.e. the role of $\dv=1/(N-1)$. It is clear that the discretization grid has no influence on the results, as expected from Theorem \ref{th:delta_eq}. The dot-dashed lines without markers show the evolution of the macroscopic velocity obtained with the $\chi$ model. The colour code is chosen to ensure that the curves with $\Dv_{\delta}= \tfrac12 \Dv_{\chi}$ are drawn in the same colour. As expected, the macroscopic velocity for the $\chi$ and the $\delta$ model behave very similarly, provided the parameter $\Dv$ is chosen correctly.

\begin{figure}
\includegraphics[width=0.49\textwidth]{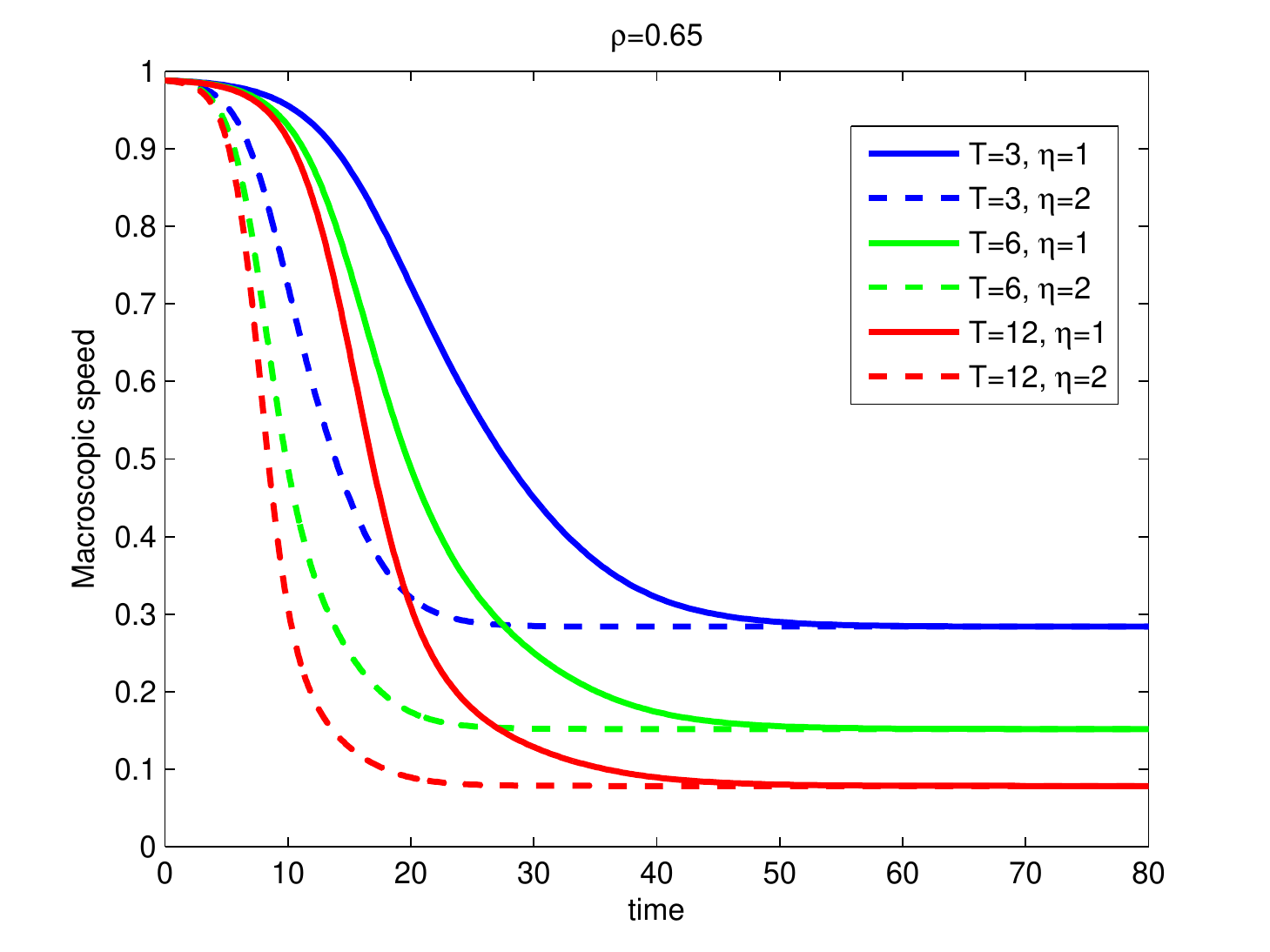}
\hfill
\includegraphics[width=0.49\textwidth]{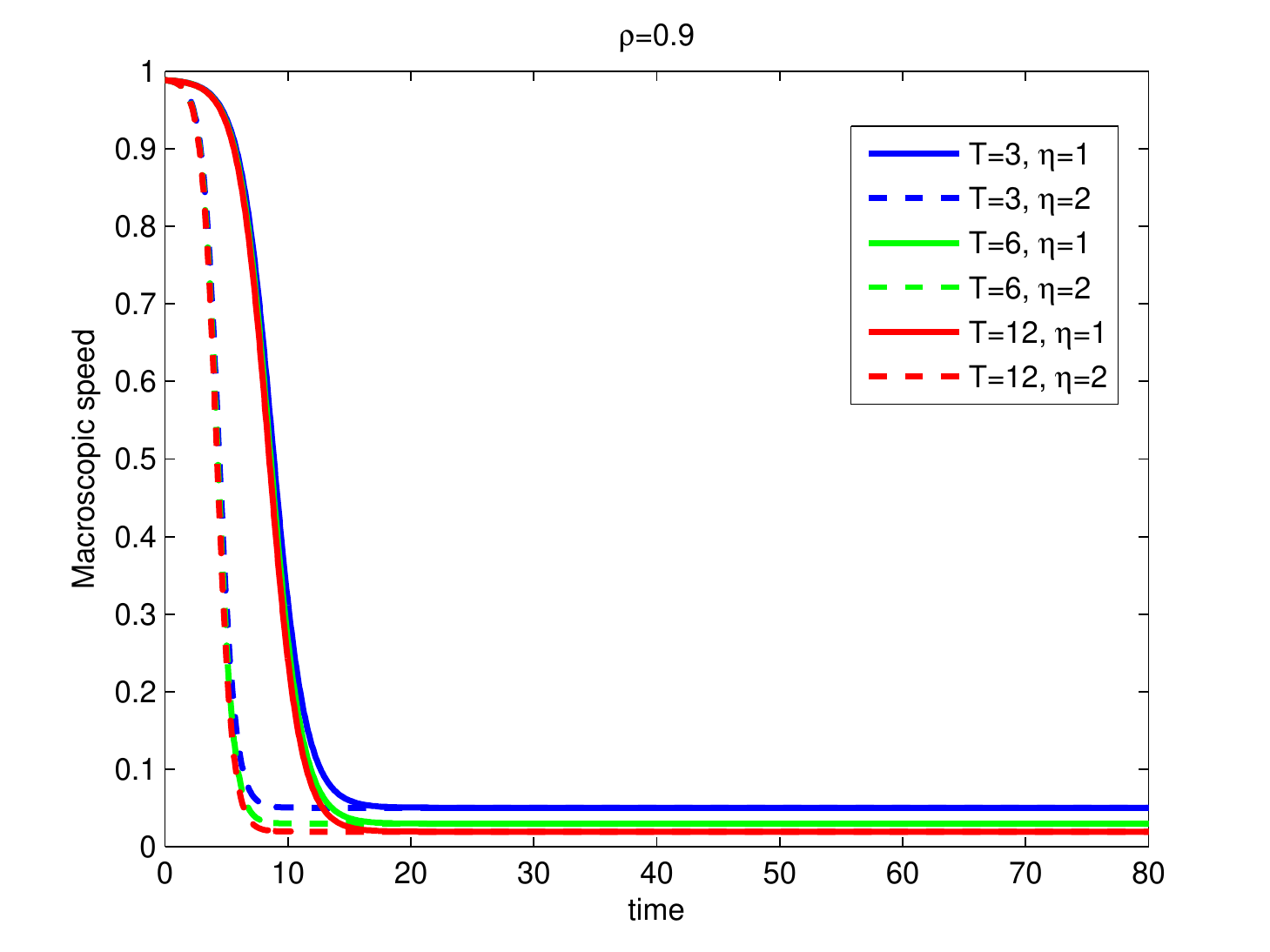}
\caption{Evolution of the macroscopic velocity in time, for different values of $T$ and $\eta$. Left: $\rho=0.65$. Right: $\rho=0.9$.}
\label{fig:decel}
\end{figure}

Next, in Figure \ref{fig:decel}, we show the evolution of the macroscopic velocity in two cases when we expect {\em deceleration} for the  $\delta$ model. Namely, we consider $\rho=0.65$ in the left panel and $\rho=0.9$ in the right panel. The initial distribution is 
$f_N(t=0)=\rho-\epsilon$, $f_1(t=0)=\epsilon$, and $f_j(t=0)=0, j=2,\dots,N-1$. The value $\epsilon$ is introduced to ensure convergence to the stable equilibrium, see Remark \ref{rem:unstable_eq}. Here $\epsilon=0.01$. In other words, we start with a congested traffic, in which initially almost all vehicles are traveling at the fastest speed available. Clearly, this situation is somewhat artificial, but surely we expect the vehicles to brake. Since braking does not depend on $\Dv$, we expect that the relaxation time towards equilibrium depends mainly on $\eta$ and only weakly on $T$. This is clearly seen in both pictures. The macroscopic speed to which the model relaxes on the other hand will depend  on $\Dv$ and on $\rho$, but not on $\eta$. Note that when $\rho=0.9$, in all cases considered here, the equilibrium speed is nearly zero: in fact the traffic is extremely jammed. For $\rho=0.65$ instead, we expect that the traffic will have a residual speed, because we are well %above the critical density,
below the value $P=1/2$, but cars are not ``bumper to bumper'', and this residual speed does depend on $\Dv$.

\paragraph{Fundamental diagrams}
As already discussed in the previous section,  the nonzero elements of the matrix $A_{\chi}^j$ can be lumped in the matrix $A_{\delta^j}$ for $N$  sufficiently large, with the only exception of the elements in the
%bottom right corner $r \times r$ of the matrix.
rectangle $r\times N$ (see Figure \ref{fig:chimatrices}) of the matrices for $j=N-r+1,\dots,N$. This is shown, for instance, in the evaluation of the expected value of the resulting speeds due to acceleration interactions in \eqref{eq:expected_delta} and  \eqref{eq:expected_chi}, which are comparable, except again for high speed values close to $\vm$ (and different from it by at most $\Dv$). Thus, although the $\chi$ model is apparently more refined then the $\delta$ model, we expect both models to provide similar macroscopic information, for large $N$. This is usually analyzed by computing the density and the flux as moments of the asymptotic kinetic distribution $f^{\infty}(v)$:
\begin{equation*}\label{eq:densityflux}
	\rho=\int_0^{\vm} f^{\infty}(v)\,\dvu, \quad (\rho u)=\int_0^{\vm} v f^{\infty}(v)\,\dvu
\end{equation*}
and by studying the characteristics of the related {\em fundamental diagram} which is obtained plotting the flux against the density.

Notice that, for the $\delta$ model, Theorem~\ref{th:delta_eq} ensures that only few velocities, obtained with $\dv=\Dv$, are necessary to describe completely the exact asymptotic kinetic distribution. We expect therefore that the macroscopic behavior of the $\delta$ model will be apparent even on the coarse velocity grids, i.e. for $r=1$.

\begin{figure}[t!]
	\centering
	\includegraphics[width=0.49\textwidth]{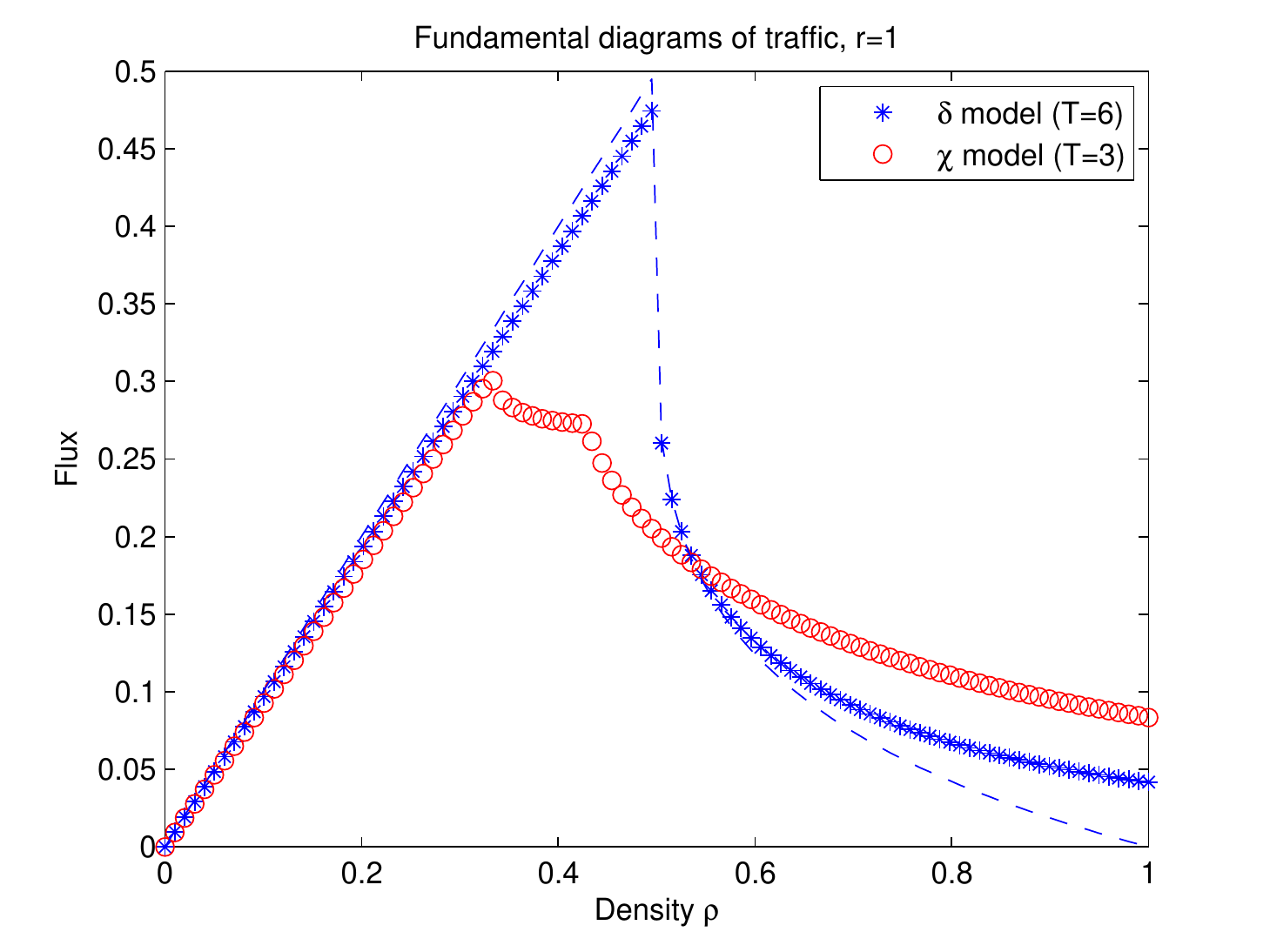}
	\hfill
	\includegraphics[width=0.49\textwidth]{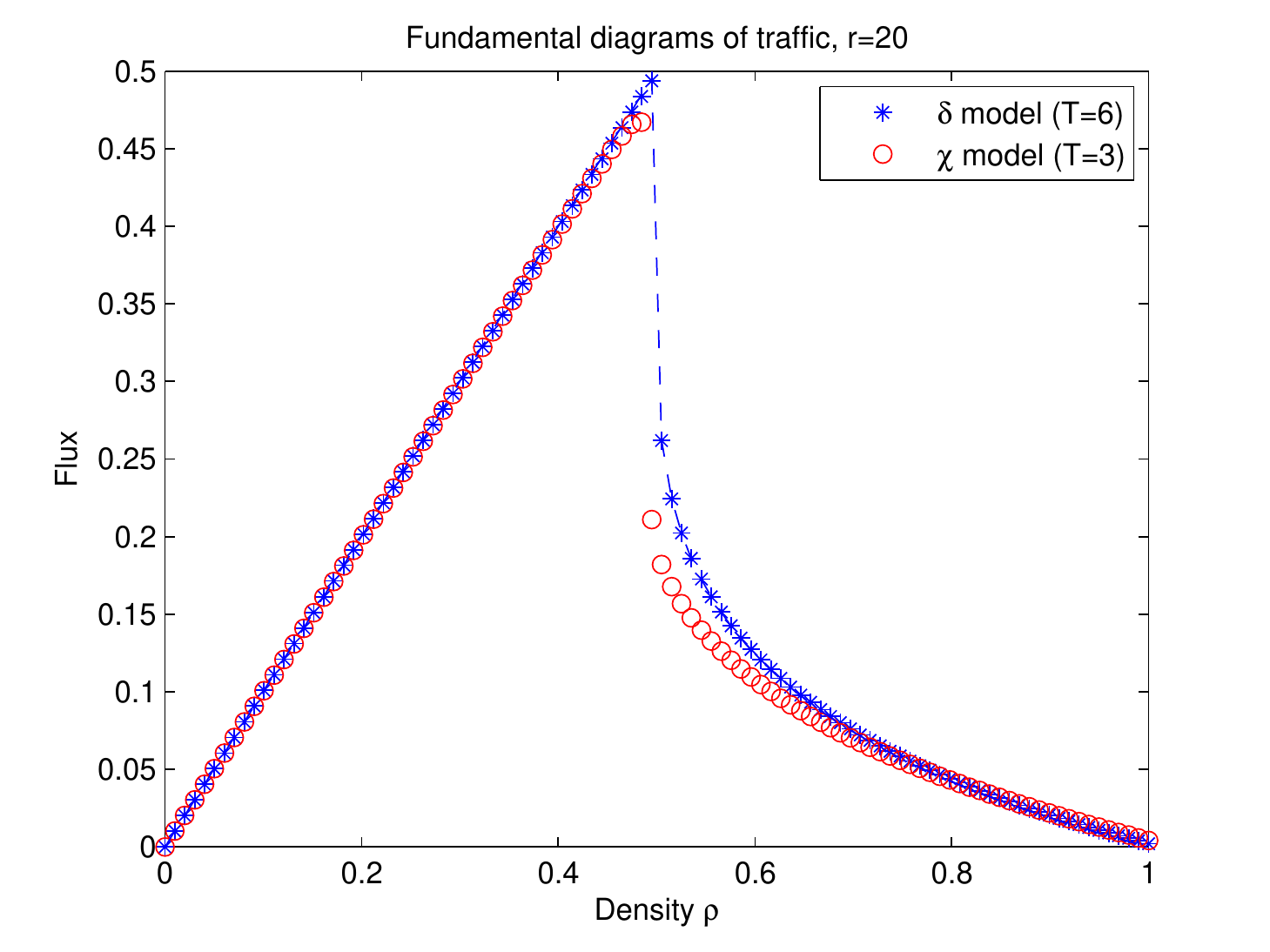}\\
	\includegraphics[width=0.49\textwidth]{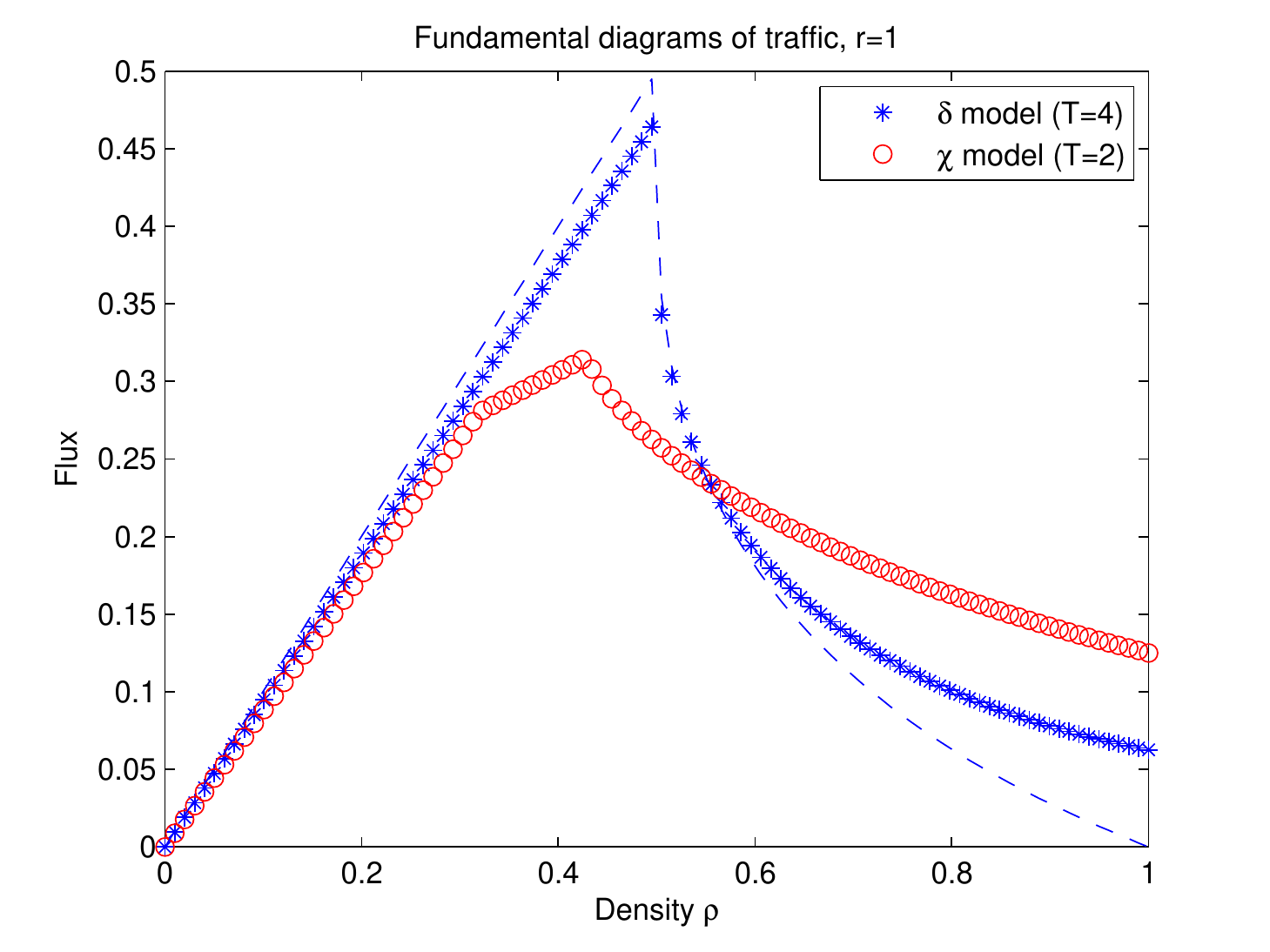}
	\hfill
	\includegraphics[width=0.49\textwidth]{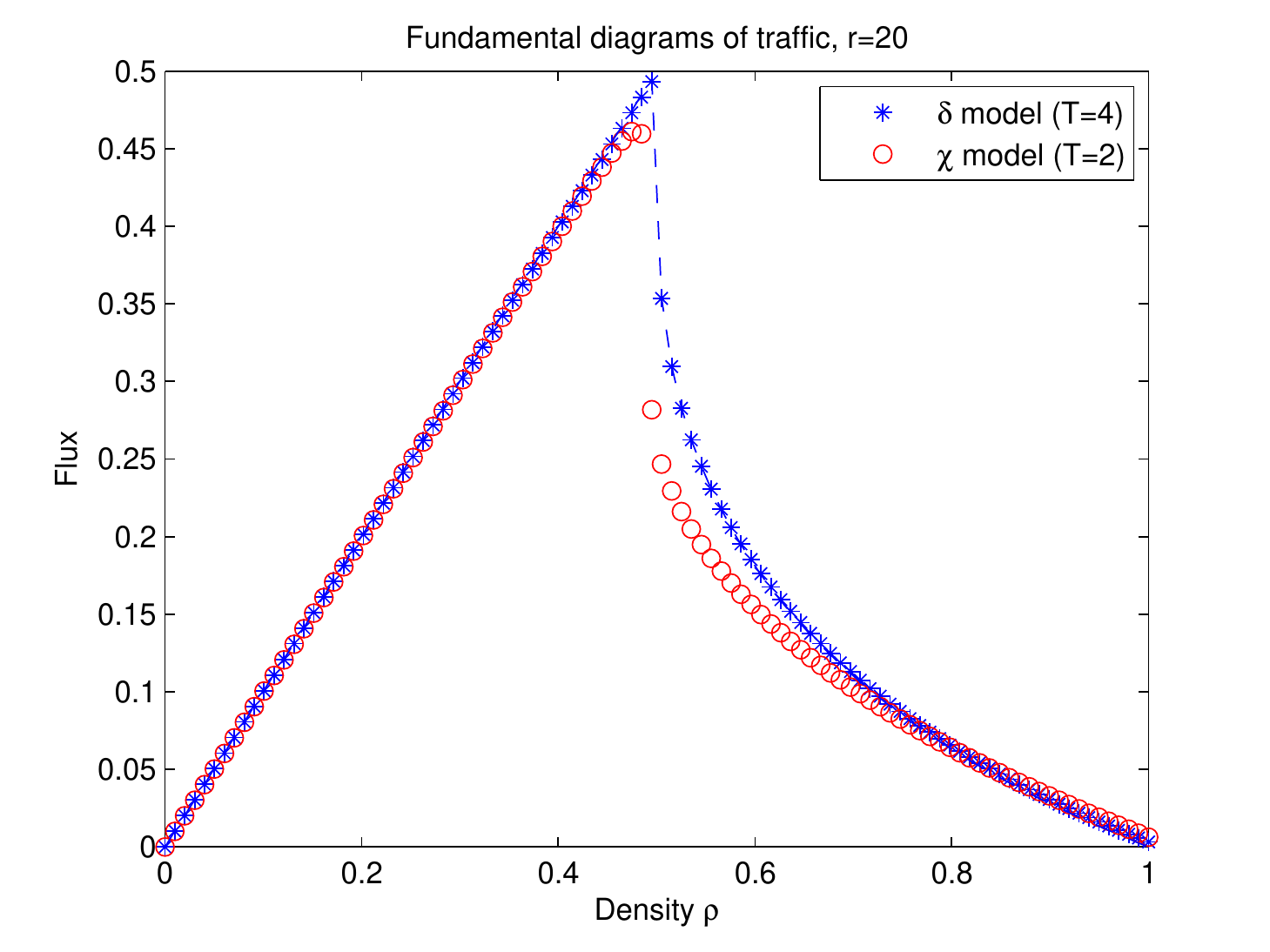}	
	\caption{Fundamental diagrams resulting from the $\delta$ model  (blue *-symbols) and from the $\chi$ model with acceleration parameter $\Dv_{\delta}=\frac12\Dv_{\chi}$ (red circles). 	The dashed line is the flux of the $\delta$ model in the limit $r\to\infty$.}
	\label{fig:funddiag}
\end{figure}

Figure~\ref{fig:funddiag} shows the fundamental diagrams provided by the $\delta$ model (blue curves) and the $\chi$ model (red curve), computed with $\Dv_{\delta}=\frac12\Dv_{\chi}$, for two different values of $\Dv_{\delta}$.
In the left panel, $r=1$, while $r=20$ on the right.
The figure shows that the diagram of the $\chi$ model is very similar
to the diagram of the $\delta$ model when $N\rightarrow\infty$ and this result is in agreement with the fact that the expected output speed of the two models is mostly the same (i.e., the same in a large range of pre-interaction speeds) when choosing the acceleration parameter of the $\delta$ model as a half of the acceleration parameter of the $\chi$ model. The only difference is provided by the maximum speeds which, as already noted, are slightly different. Note that the similarity of the fundamental diagrams does not mean that the asymptotic equilibrium functions of the $\chi$ and of the $\delta$ model converge to the same function as $N$ tends to $\infty$.

Observe that the fundamental diagrams given by the $\delta$ model in both plots in each line of Figure~\ref{fig:funddiag} use the same information. In fact, following the results of Theorem~\ref{th:delta_eq}, the macroscopic flux is given by
\[
\mathrm{Flux}_{\delta}(r)=
\int_{0}^{\vm} vf^{\infty}_N(v) \dvu
= \sum_{j=1}^N 
  (\mathbf{f}_r)_j \frac{1}{|I_j|}
  \int_{I_j} v\, \dvu 
  = \sum_{l=1}^{T+1}
    (\mathbf{f}_1)_l v_{(l-1)r+1}
\]
where $v_j$ denotes the center of the cell $I_j$ and $\mathbf{f}_r$ is the vector containing the equilibria of the system \eqref{eq:delta_vectsys} with $\Dv/\dv=r\in\mathbb{N}$.
Recalling the definition of $I_j$, we have that $v_1=\Dv/4r$, $v_N=\vm-\Dv/4r$ and 
$v_{(l-1)r+1}=(l-1)\Dv$. Thus, in order to compute the fundamental diagram of the $\delta$ model with any value of $r$, it is enough to compute the equilibria $\mathbf{f}_1$ using $r=1$ and then compute the flux with the formula above. In particular, using only $\mathbf{f}_1$, one may also compute the fundamental diagram of the $\delta$ model also in the limit $r\to\infty$ with the formula
\[\mathrm{Flux}_{\delta}(\infty)=
\sum_{l=1}^{T+1} (\mathbf{f}_1)_l (l-1)\Dv.
\]
The dashed blue line in all panels of Figure~\ref{fig:funddiag} shows the quantity $\mathrm{Flux}_{\delta}(\infty)$ just defined.
Note that in the case of the $\chi$ model, for each value of $r$, one has instead to compute the full equilibrium distribution with $N=rT+1$ velocities.

When increasing $r$, we observe that the flux at $\rho_{\max}$ approaches zero. This is because for $\rho=\rho_{\max}$, $(\mathbf{f}_1)_1$ is the only non zero component at equilibrium, all vehicles travel at a velocity in the lowest speed class $I_1$ and the flux is therefore $\tfrac{\Dv}{4r}(\mathbf{f}_1)_1$. Similarly, in the free phase
%,i.e. below the critical density,
all vehicles travel at a velocity in the highest speed class $I_N$ and the flux is therefore $(\vm-\tfrac{\Dv}{4r})(\mathbf{f}_1)_{T+1}=(\vm-\tfrac{\Dv}{4r})\rho$. The free-phase flux is therefore linear in $\rho$ and its slope approaches $\vm$ when $r\to\infty$.

\begin{figure}
	\centering
	\includegraphics[width=0.49\textwidth]{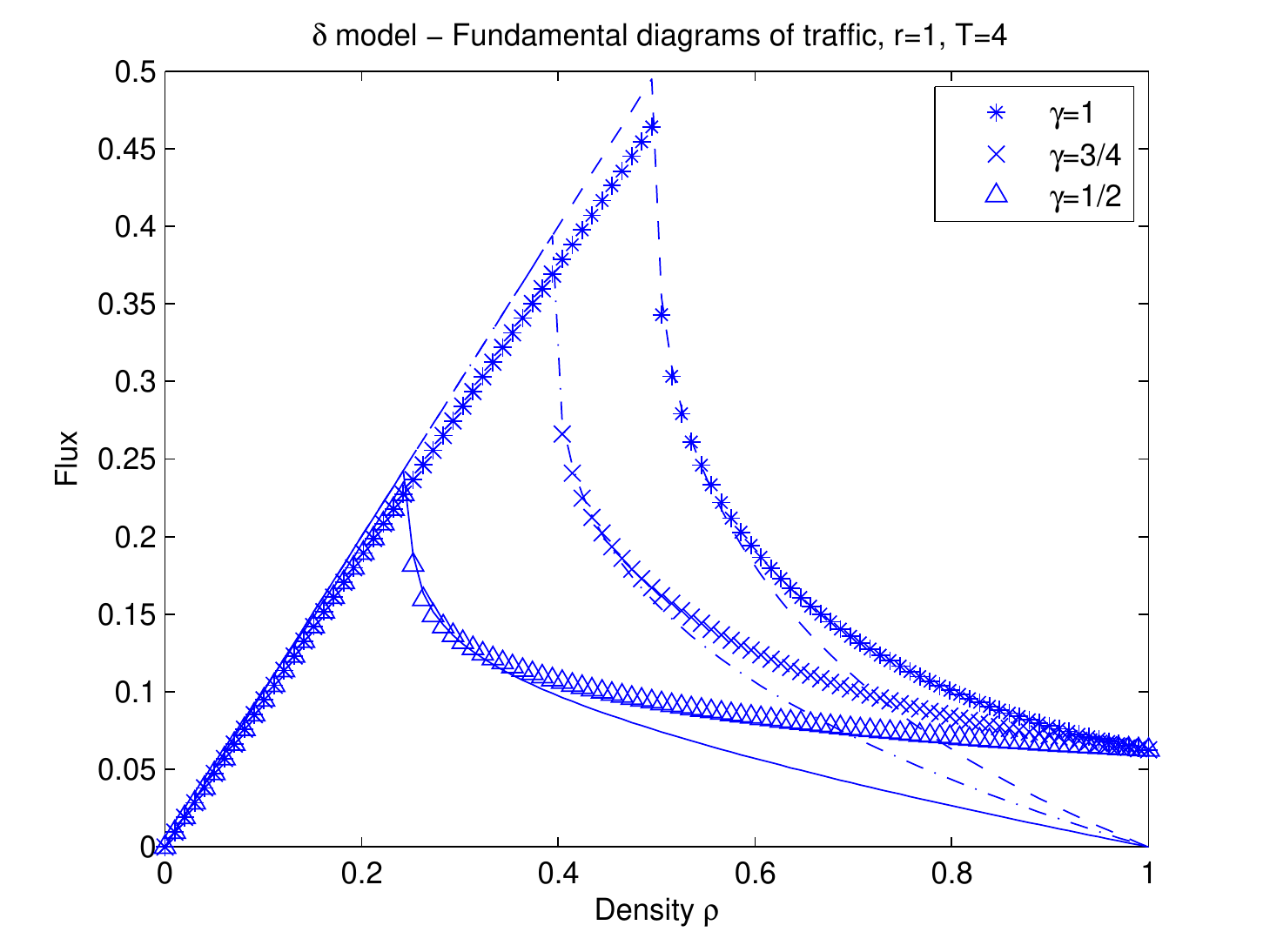}
	\hfill
	\includegraphics[width=0.49\textwidth]{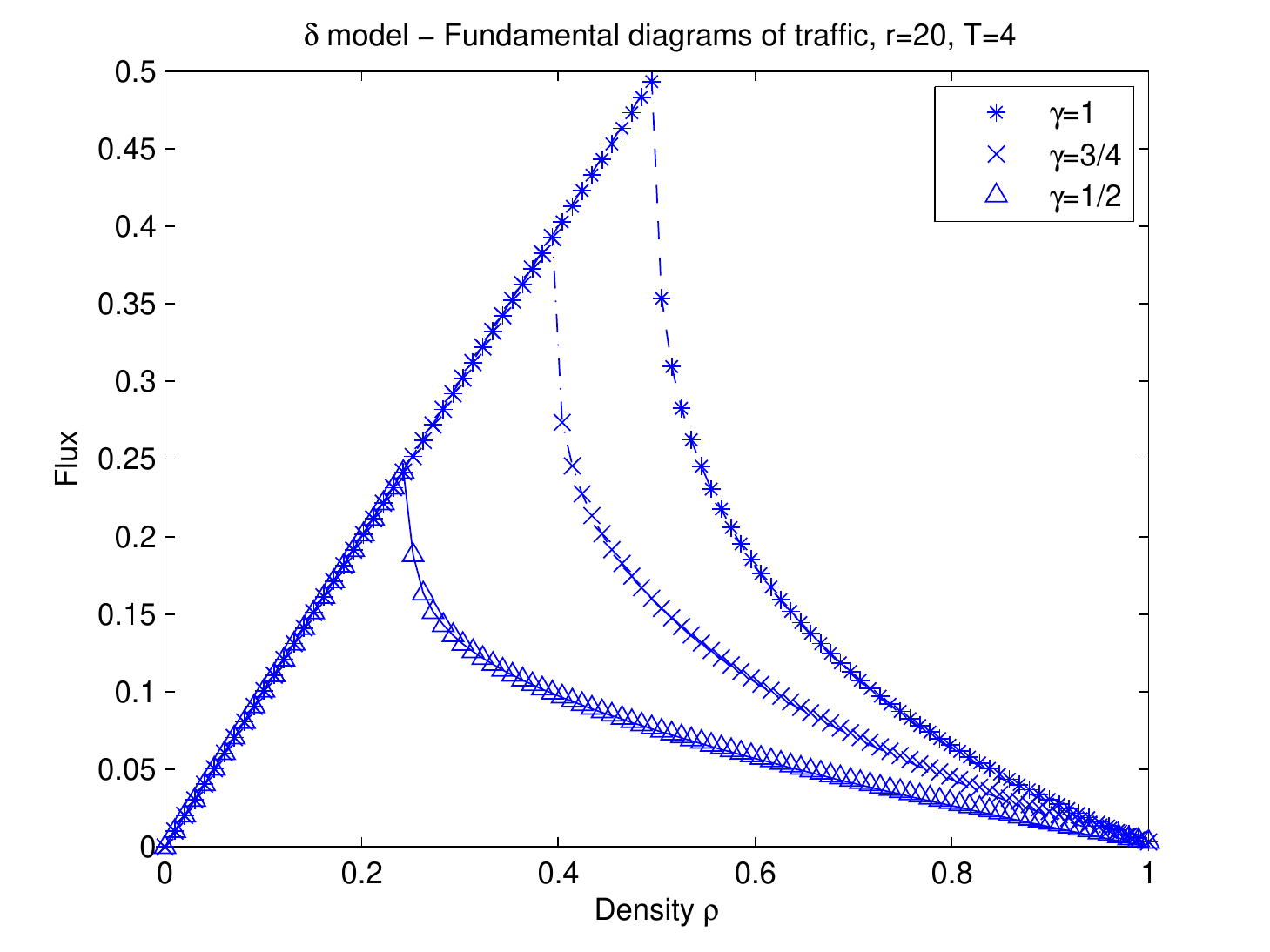}
	\caption{Fundamental diagrams resulting from the $\delta$ model with acceleration parameter $\Dv_{\delta}=\frac14$. The probability $P$ is taken as in \eqref{eq:gamma_law} with $\gamma=1$ (blue data), $\gamma=3/4$ (green data) and $\gamma=1/4$ (cyan data). The dashed lines are the fluxes in the limit $r\to\infty$.\label{fig:FD-differentGamma}}
\end{figure}

In Figure~\ref{fig:funddiag}, we observe that both models provide a sharp decrease in the flux, beyond the {\em critical density}, namely the value of the density marking the transition from free to congested flow. This phenomenon is well known in traffic modeling, and it is called \emph{capacity drop}, see~\cite{ZhangMultiphase} and references therein. From Theorem \ref{th:delta_eq} it is apparent that, for the $\delta$ model, the critical density corresponds to a bifurcation of the equilibrium solutions. In fact, one deduces that for $P\geq\tfrac12$ the equilibrium distribution is $f^{\infty}(v)=\rho\delta_{\vm}(v)$, which means that all vehicles travel at maximum speed. Only when $P<\tfrac12$ the lower speed classes begin to fill up. Thus, the physical concept of phase transition in traffic flow theory has a rigorous mathematical counterpart in the present model. Using the law given in \eqref{eq:gamma_law} the value of $\rho$ for which $P=1/2$ is $\rho_{\mathsf{c}}:=\left(\tfrac12\right)^{\tfrac1\gamma}$ and then we may act on $\gamma$ in order to change the critical density. For instance, see Figure \ref{fig:FD-differentGamma} in which we plot three fundamental diagrams of the $\delta$ model with $P$ as in \eqref{eq:gamma_law} and $\gamma=1$ (\textasteriskcentered-markers), $\gamma=3/4$ ($\times$), $\gamma=1/4$ (\APLup).

\begin{remark}\label{rem:chi:critical}
Observe from Figure \ref{fig:funddiag} that the critical density of the $\chi$ model approaches the critical density of the $\delta$ model when $N\rightarrow \infty$. In fact, since the matrix $A^1_{\chi}\rightarrow A^1_{\delta}$ for $r\rightarrow \infty$, we also have that $\left(\f^{\chi}_r\right)_{1}\rightarrow \left(\f^{\delta}_1\right)_{1}$. 
More precisely, the analogous of \eqref{eq:eq:feq1} for the $\chi$ model is
\[
 -\left(1-P\right)f_1^2  +\left(1-2P+\frac{P}{2r}\right)\rho f_1=0
\]
and the stable equilibrium is thus
\begin{equation*}
\begin{cases}
0 & P \geq \tfrac{2r}{4r-1} \\ 
\rho \frac{1-2P}{1-P} +\mathcal{O}(\tfrac{1}{r})&\mbox{otherwise}
\end{cases}
\end{equation*}
\end{remark} 

\begin{figure}[t!]
	\centering
	\includegraphics[width=\textwidth]{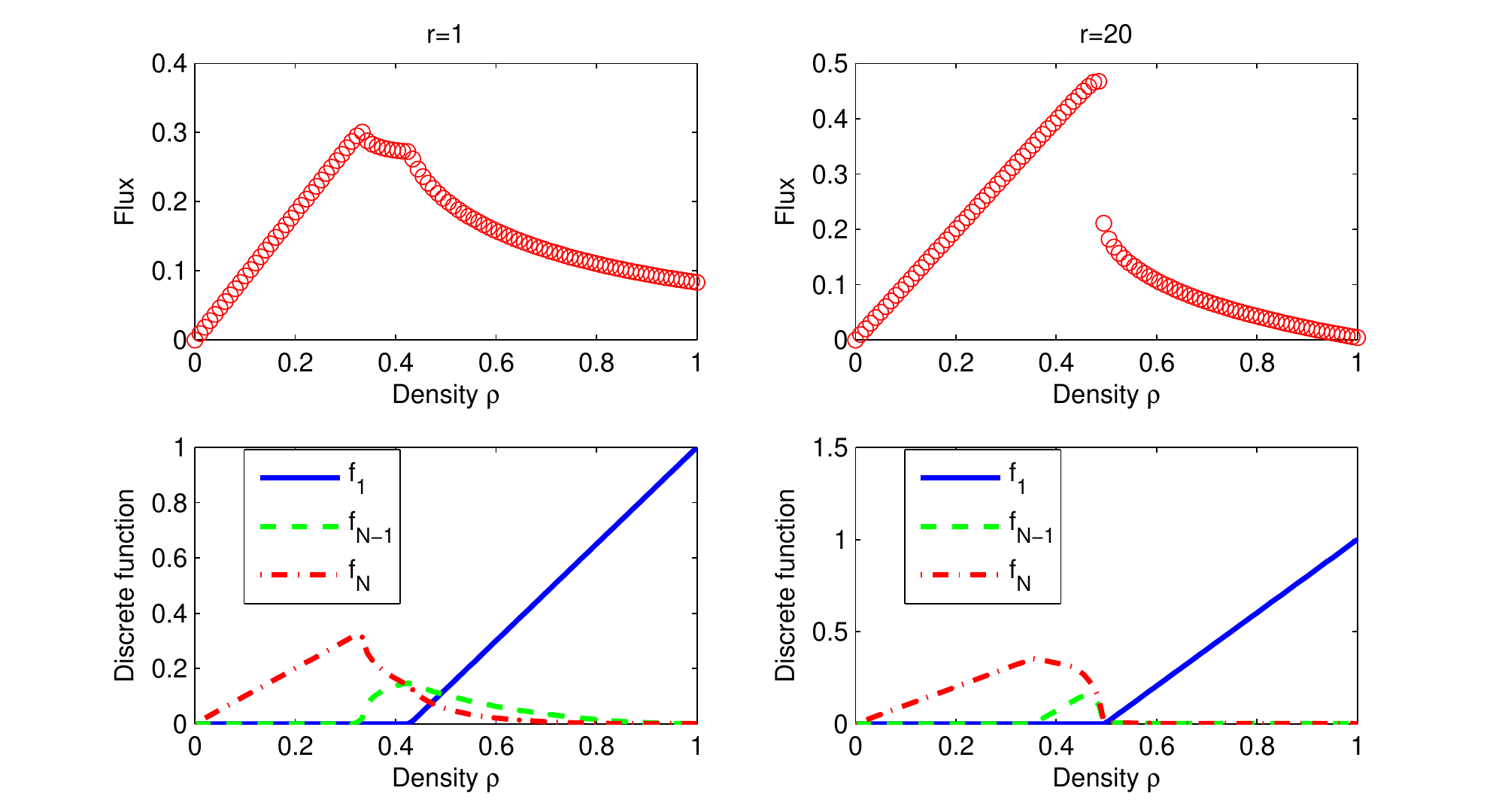}
	\caption{Top: fundamental diagrams provided by the $\chi$ model with $N=4$ (left) and $N=61$ (right) velocities. Bottom: equilibria of the function $f_1$ (blue solid line), $f_{N-1}$ (green dashed) and $f_N$ (red dot-dashed) for any density in $\left[0,1\right]$. }
	\label{fig:fj_chi}
\end{figure}

In Figure~\ref{fig:fj_chi} we show the fundamental diagrams of the $\chi$ model for $r=1$ and $r=20$, together with a few representative  $f_j$'s at equilibrium, as functions of $\rho$. In the left part, for $r=1$, two phase transitions appear in the fundamental diagram (top left). Comparing with the bottom left plot, the origin of this phenomenon can be appreciated. A first transition occurs when the density becomes large enough to force a few drivers to brake: thus the second largest speed class $I_{N-1}$ starts being populated (green dashed curve), while the fastest speed class begins to be depleted (red curve).  A second transition occurs when some vehicles enter the lowest speed class (blue curve). This latter transition is the one that, when increasing $r$, moves towards the critical density $\rho=1/2$, see Remark~\ref{rem:chi:critical}.
The first phase transition is not observable for large $r$,
because $f_{N-1}$ is related to the velocity $v_{N-1}\rightarrow \vm$, as $\dv\to 0$,
%so that the fill-up of $I_{N-1}$ is not enough to determine a phase transition.
so that the transition of vehicles from $I_N$ to $I_{N-1}$ is not enough to determine an abrupt change in the flux.

\begin{figure}
	\centering
	\includegraphics[width=0.49\textwidth]{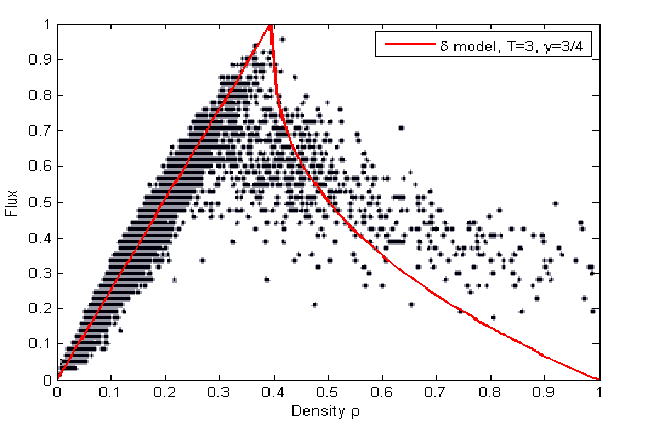}
	\hfill
	\includegraphics[width=0.48\textwidth]{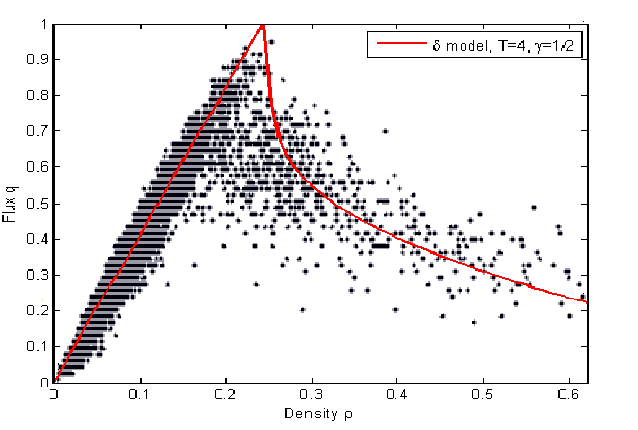}
	\caption{Comparison between experimental data and the diagram resulting from the $\delta$ model, with $\Delta v=1/4$, $P=1-\rho^{\nicefrac14}$. The experimental diagram is reproduced by kind permission of Seibold et al. \cite{seibold2013NHM}.\label{fig:sperimentale}
		}
\end{figure}

\paragraph{Comparison with experimental data}
Figure \ref{fig:sperimentale} shows the comparisons of the results produced by the $\delta$ model with experimental data published in \cite{seibold2013NHM}. In the left figure we have tuned the critical density to reproduce the correct position of the phase transition. The experimental data are normalized and the fundamental diagram computed by the model is provided for all values of the density between $0$ and $\rho_{\max}$, which corresponds to a situation in which all vehicles are bumper-to-bumper and still. The figure on the right stems from the observation that experimental data contain a residual movement even in the congested phase. Thus the bumper-to-bumper situation is never actually observed. Therefore in the figure on the right we also tune the maximum density $\tilde{\rho}_{\max}$ actually observed, with $\tilde{\rho}_{\max}<\rho_{\max}$. In this case we obtain a very good agreement with experimental data. With the present model we do not reproduce the scattering of the data, but this can be explained keeping into account a mixture of two different populations of drivers and/or vehicles, as we have proposed in \cite{PgSmTaVg}.

\section{Conclusions and perspectives}

In this work we have studied two kinetic models for vehicular traffic based on a Boltzmann-like term describing binary interactions. We have assumed a continuous space of microscopic speeds and we have analyzed the space homogeneous case to study the asymptotic behavior of the distribution function together with the resulting flux-density diagrams.

Our models are characterized by a parameter $\Dv$, that has physical relevance and is related to the maximum speed variation in a unit of time. The models are defined by the transition probability of gaining a given velocity and they differ only in the modeling of the acceleration interaction.

First of all we have studied the case in which the resulting speed after an acceleration is obtained by a velocity jump from $\vb$ to $\vb+\Dv$, where $\vb$ is the pre-interaction speed. We have referred to this model as $\delta$ model and we have found a class of asymptotic distributions which is atomic with respect to the velocity variable. In other words it is a linear combination of Dirac delta functions centered in a finite number of velocities. The number $T$ of delta functions contributing to the stable equilibrium distribution is controlled by the acceleration parameter through the relation $\Dv=\vm/T$. %, irrespective of the numerical discretization of velocity space. 
This result means that the number of discrete velocities necessary to completely describe the equilibrium distribution function is implicitly determined by the acceleration parameter $\Dv$ and therefore is small.
% One of the consequences of this result is that there is a connection between the continuous-velocity $\delta$ model and the discrete-velocity approach, see~\cite{FermoTosin14}, making it robust because the distance of two adjacent velocities in the discrete lattice, can now be given a physical meaning.

Instead, in the $\chi$ model we have prescribed the acceleration interaction in a way that is closer to the modeling given in~\cite{KlarWegener96}, but again respecting the physical relevance of $\Dv$. In fact, we have assumed that the output speed after acceleration is uniformly distributed over the range $[\vb,\,\vb+\Dv]$.  We have shown that the $\chi$ model with acceleration parameter $\Dv$ gives a macroscopic behavior similar to the one provided by the simpler $\delta$  model with acceleration parameter $\Dv/2$, as it can be seen by studying the macroscopic properties of the two models and comparing their fundamental diagrams, notwithstanding the fact that the respective asymptotic distribution functions do not approach each other. Thus the $\chi$ model, despite the more sophisticated description of interactions, gives the same macroscopic information of the simpler and computationally much cheaper $\delta$ model, at least at equilibrium. Moreover we have proved that both models provide a bounded macroscopic acceleration, studying the evolution in time of the macroscopic velocity, and its relation with the parameter $\Dv$.

The results obtained in this work suggest that a small number of velocities is sufficient for the kinetic modeling of traffic. This is crucial to make kinetic modeling of complex traffic flows amenable to computations. Note also that here the acceleration remains controlled by $\Dv$, in contrast to models based on a lattice of velocities, in which the possible outcomes of an interaction and the acceleration of vehicles depend on the particular lattice chosen \cite{DelitalaTosin}. Moreover the complete knowledge of the equilibrium distribution is crucial to derive macroscopic models with a rich enough closure law resulting naturally as consequence of the microscopic interactions. Thus, without the need of prescribing heuristic speed-density relations, we obtain fundamental diagrams with a phase transition and a capacity drop as those occurring in experimental data. 

Finally, the particular structure of the equilibrium distribution provided by the $\delta$ model allows one to generalize this framework to the case of multipopulation models, as in \cite{PgSmTaVg3}, in order to recover multivalued fundamental diagrams, see also  \cite{PgSmTaVg}.

\appendix
\numberwithin{equation}{section}
\section{Matrix elements for the discretization of the $\chi$ model}
\label{app:eq:matricichi}
In order to compute the  $A^j_\chi$ matrices, we observe that the gain operator of the $\delta$ model given in~\eqref{eq:delta_model} differs from the gain operator of the $\chi$ model only in the terms proportional to $P$ appearing in the equation~\eqref{eq:gainschi}.
Therefore, we just show the terms resulting from
\begin{equation*}
	\frac{1}{\eta}\widetilde{G}[f,f](t,v)=\frac{P\rho}{\Dv}\int_0^{\vm-\Dv} \chi_{[\vb,\vb+\Dv]}(v)\fb\dvub+P\rho\int_{\vm-\Dv}^{\vm} \frac{\chi_{[\vb,\vm]}(v)}{\vm-\vb}\fb\dvub.
\end{equation*}

\begin{subequations} \label{eq:elementichi}
When the terms above are integrated over the cells $I_1$, we get
\begin{align}
	\int_{I_1} \frac{1}{\eta}\widetilde{G}[f,f](t,v) \dvu = & \frac{P\rho}{4r} f_1.
\end{align}
For $j=2,\dots,r$,
\begin{align}
\int_{I_j} \frac{1}{\eta}\widetilde{G}[f,f](t,v) \dvu 
=& \frac{P\rho}{r} \sum_{h=1}^{j-1} f_h + \frac{P\rho}{2r} f_j.
\end{align}
For $j=r+1$,
\begin{align}
\int_{I_{r+1}} \frac{1}{\eta}\widetilde{G}[f,f](t,v) \dvu = 
&\frac{3P\rho}{4r} f_{j-r}+ \frac{P\rho}{r} \sum_{h=j-r+1}^{j-1} f_h + \frac{P\rho}{2r} f_j.
\end{align}
For $j=r+2,\dots,N-r-1$,
\begin{align}
\int_{I_j} \frac{1}{\eta}\widetilde{G}[f,f](t,v) \dvu 
= & \frac{P\rho}{2r} f_{j-r} + \frac{P\rho}{r} \sum_{h=j-r+1}^{j-1} f_h + \frac{P\rho}{2r} f_j.
\end{align}
For $j=N-r$,
\begin{align}
\int_{I_{N-r}} \frac{1}{\eta}\widetilde{G}[f,f](t,v) \dvu 
= & \frac{P\rho}{2r} f_{j-r} + \frac{P\rho}{r} \sum_{h=j-r+1}^{j-1} f_h + P\rho\left[ \frac{3}{8r} + \frac12 + \left(\frac12-r\right)\log\left(\frac{2r}{2r-1}\right)\right] f_j.
\end{align}
For $j=N-r+1,\dots,N-1$,
\begin{align}
\int_{I_j} \frac{1}{\eta}\widetilde{G}[f,f](t,v) \dvu 
&= \frac{P\rho}{2r} f_{j-r} + \frac{P\rho}{r} \sum_{h=j-r+1}^{N-r-1} f_h + P\rho\left[ \frac{1}{2r} + \log\left(\frac{2r}{2r-1}\right)\right] f_{N-r} \\ \nonumber
& + P\rho \sum_{h=N-r+1}^{j-1} \log\left(\frac{N-h+\frac12}{N-h-\frac12}\right) f_h \\ \nonumber
&+ P\rho\left[ 1 + \left(j+\frac12-N\right)\log\left(\frac{N-j+\frac12}{N-j-\frac12}\right)\right] f_j.
\end{align}
Finally, for $j=N$,
\begin{align}
\int_{I_N} \frac{1}{\eta}\widetilde{G}[f,f](t,v) \dvu 
&= P\rho\left[\frac{1}{8r} + \frac12\log\left(\frac{2r}{2r-1}\right)\right] f_{j-r} + \frac{P\rho}{2} \sum_{h=j-r+1}^{j-1} \log\left(\frac{N-h+\frac12}{N-h-\frac12}\right) f_h + P\rho f_N.
\end{align}
\end{subequations}

The matrices $A^j_\chi$ can be formed by removing the underlined terms in \eqref{eq:Jdelta:allj} with $r\in\mathbb{N}$ and adding the contributions given in \eqref{eq:elementichi}.

\end{document}